\newtheorem{thm}{Theorem}[section]
\newtheorem{remark}[thm]{Remark}
\author{Fabio Silva Botelho \\
Department of Mathematics \\
Federal University of Santa Catarina - UFSC \\
Florian\'{o}polis, SC - Brazil}
\date{}
\title{\bf  On duality principles for non-convex optimization with applications to superconductivity and some existence results for a model in non-linear elasticity }
\begin{document}
\maketitle

\abstract{This article develops   duality principles applicable to the Ginzburg-Landau system in superconductivity.
The main results are obtained through standard tools of convex analysis, functional analysis, calculus of variations and duality theory. In the second section, we present the general result for the case including a magnetic field and the respective magnetic potential in a local extremal context. Finally, in the last section we develop some global existence results for a model in elasticity.}

\section{Introduction}
In this work we present a theorem which represents a duality principle suitable for a large class of non-convex variational problems.

At this point we refer to the exceptionally important article "A contribution to contact problems for a class of solids and structures" by
W.R. Bielski and J.J. Telega, \cite{85},  published in 1985,  as the first one to successfully  apply and generalize the convex analysis approach to a model in non-convex and non-linear mechanics.

The present work is, in some sense, a kind of extension  of this previous work \cite{85} combined with a D.C. approach presented in \cite{12} and others such as \cite{2900}, which greatly influenced and
inspired my work and recent book \cite{120}.

First, we recall that about the year 1950 Ginzburg and Landau introduced a theory to model the super-conducting behavior of some types of materials below a critical temperature $T_c$,
which depends on the material in question. They postulated the free density energy may be written close to $T_c$ as
$$F_s(T)=F_n(T)+\frac{\hbar}{4m}\int_\Omega |\nabla \phi|^2_2 \;dx+\frac{\alpha(T)}{4}\int_\Omega |\phi|^4\;dx-\frac{\beta(T)}{2}\int_\Omega |\phi|^2\;dx,$$
where $\phi$ is a complex parameter, $F_n(T)$ and $F_s(T)$ are the normal and super-conducting free energy densities, respectively (see \cite{100}
 for details).
Here $\Omega \subset \mathbb{R}^3$ denotes the super-conducting sample with a boundary denoted by $\partial \Omega=\Gamma.$ The complex function $\phi \in W^{1,2}(\Omega; \mathbb{C})$ is intended to minimize
$F_s(T)$ for a fixed temperature $T$.

Denoting $\alpha(T)$ and $\beta(T)$ simply by $\alpha$ and $\beta$,  the corresponding Euler-Lagrange equations are given by:
\begin{equation} \left\{
\begin{array}{ll}
 -\frac{\hbar}{2m}\nabla^2 \phi+\alpha|\phi|^2\phi-\beta\phi=0, & \text{ in } \Omega
 \\ \\
 \frac{\partial {\phi}}{\partial \textbf{n}}=0, &\text{ on } \partial\Omega.\end{array} \right.\end{equation}
This last system of equations is well known as the Ginzburg-Landau (G-L) one.
In the physics literature is also well known the G-L energy in which a magnetic potential here denoted by $\textbf{A}$ is included.
The functional in question is given by:
\begin{eqnarray}\label{ar67}
J(\phi,\textbf{A})&=&\frac{1}{8\pi}\int_{\mathbb{R}^3} |\text{ curl }\textbf{A}-\textbf{B}_0|_2^2\;dx+\frac{\hbar^2}{4m}\int_\Omega \left|\nabla \phi-\frac{2 ie}{\hbar c}\textbf{A}\phi\right|^2_2\;dx \nonumber \\ &&+\frac{\alpha}{4}\int_\Omega|\phi|^4\;dx-\frac{\beta}{2}\int_\Omega |\phi|^2\;dx
\end{eqnarray}
Considering its minimization on the space $U$, where $$U= W^{1,2}(\Omega; \mathbb{C}) \times W^{1,2}(\mathbb{R}^3; \mathbb{R}^3),$$ through the physics notation the corresponding Euler-Lagrange equations are:
\begin{equation} \left\{
\begin{array}{ll}
 \frac{1}{2m}\left(-i\hbar\nabla -\frac{2e}{c}\mathbf{A}\right)^2 \phi+\alpha|\phi|^2\phi-\beta\phi=0, & \text{ in } \Omega
 \\ \\
 \left(i\hbar \nabla\phi+\frac{2e}{c}\textbf{A}\phi\right) \cdot \textbf{n}=0, &\text{ on } \partial\Omega,\end{array} \right.\end{equation}
and
\begin{equation} \left\{
\begin{array}{ll}
 \text{curl }(\text{curl } \textbf{A})= \text{ curl } \textbf{B}_0+\frac{4 \pi}{c} \tilde{J}, & \text{ in } \Omega
 \\ \\
 \text{curl }(\text{curl } \textbf{A})=\text{ curl }\textbf{B}_0, & \text{ in } \mathbb{R}^3\setminus \overline{\Omega},\end{array} \right.\end{equation}
 where $$\tilde{J}=-\frac{ie \hbar}{2m}\left(\phi^*\nabla \phi-\phi\nabla \phi^*\right)-\frac{2e^2}{mc}|\phi|^2 \textbf{A}.$$
 and $$\textbf{B}_0 \in L^2(\mathbb{R}^3; \mathbb{R}^3)$$ is a known applied magnetic field.

At this point,  we emphasize to denote generically
$$\langle g,h\rangle_{L^2}=\int_\Omega Re[g] Re[h]\;dx-\int_\Omega Im[g]Im[h]\;dx,$$
$\forall h,g \in L^2(\Omega;\mathbb{C})$, where $Re[a], Im[a]$ denote the real and imaginary parts of $a$,
$\forall a \in \mathbb{C},$ respectively.

Moreover, existence of a global solution for a similar problem has been proved at section \ref{section2} of this article and in \cite{500}.

Finally, for the subsequent theoretical results we assume a simplified atomic units context.

\section{ A global existence result for the full complex Ginzburg-Landau system}\label{section2}

In this section we present a global existence result for the complex Ginzburg-Landau system in superconductivity.

The main result is summarized by the following theorem.

\begin{thm} Let $\Omega,\Omega_1 \subset \mathbb{R}^3$ be open, bounded and connected sets whose the regular (Lipschistzian) boundaries are denoted by $\partial \Omega$ and $\partial \Omega_1$, respectively.

Assume $\overline{\Omega} \subset \Omega_1$.

Consider the functional $J:U \rightarrow \mathbb{R}$ defined by
\begin{eqnarray}
J(\phi,\mathbf{A})&=& \frac{\gamma}{2}\int_\Omega |\nabla \phi-i \rho \mathbf{A} \phi|^2\;dx \nonumber \\
&&+\frac{\alpha}{2}\int_\Omega (|\phi|^2-\beta)^2\;dx+K_0\|\text{ curl }\mathbf{A}-\mathbf{B}_0\|_{2,\Omega_1}^2,
\end{eqnarray}
where $\alpha,\beta,\gamma,\rho,K_0$ are positive constants, $i$ is the imaginary unit,
$$U=U_1 \times U_2,$$
$$U_1=W^{1,2}(\Omega;\mathbb{C})$$ and
$$U_2=W^{1,2}(\Omega_1;\mathbb{R}^3).$$
Also $\mathbf{B}_0 \in W^{1,2}(\Omega_1;\mathbb{R}^3)$ denotes an external magnetic field.

Under such hypotheses, there exists $(\phi_0,\mathbf{A}_0) \in U$ such that
$$J(\phi_0,\mathbf{A}_0)=\min_{(\phi,\mathbf{A}) \in U} J(\phi,\mathbf{A}).$$
\end{thm}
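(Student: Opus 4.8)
The plan is to apply the direct method of the calculus of variations. Since $J(\phi,\mathbf{A})\geq 0$ for all $(\phi,\mathbf{A})\in U$, the infimum $m:=\inf_{(\phi,\mathbf{A})\in U}J(\phi,\mathbf{A})$ is a finite nonnegative real number, and I would start from a minimizing sequence $\{(\phi_n,\mathbf{A}_n)\}\subset U$ with $J(\phi_n,\mathbf{A}_n)\to m$. The first issue to confront is that $J$ is invariant under the gauge change $(\phi,\mathbf{A})\mapsto(e^{i\rho\chi}\phi,\mathbf{A}+\nabla\chi)$, so there is no hope of bounding $\|\mathbf{A}_n\|_{W^{1,2}(\Omega_1)}$ directly. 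I would therefore, for each $n$, solve the Neumann problem $\Delta\chi_n=-\operatorname{div}\mathbf{A}_n$ in $\Omega_1$ with $\partial\chi_n/\partial\mathbf{n}=-\mathbf{A}_n\cdot\mathbf{n}$ on $\partial\Omega_1$ (the compatibility condition holds by the divergence theorem) and replace $(\phi_n,\mathbf{A}_n)$ by the gauge-equivalent pair $(e^{i\rho\chi_n}\phi_n,\mathbf{A}_n+\nabla\chi_n)$, which has exactly the same energy; after relabeling we may thus assume $\operatorname{div}\mathbf{A}_n=0$ in $\Omega_1$ and $\mathbf{A}_n\cdot\mathbf{n}=0$ on $\partial\Omega_1$, subtracting also, if $\Omega_1$ fails to be simply connected, the projection onto the finite-dimensional space of harmonic Neumann fields.

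Next I would extract the a priori bounds from $\sup_n J(\phi_n,\mathbf{A}_n)<\infty$. The term $\frac{\alpha}{2}\int_\Omega(|\phi_n|^2-\beta)^2\,dx$, together with Young's inequality applied to the cross term $2\beta|\phi_n|^2$, bounds $\|\phi_n\|_{L^4(\Omega)}$ and hence $\|\phi_n\|_{L^2(\Omega)}$. The term $K_0\|\operatorname{curl}\mathbf{A}_n-\mathbf{B}_0\|_{2,\Omega_1}^2$ bounds $\|\operatorname{curl}\mathbf{A}_n\|_{L^2(\Omega_1)}$, and combining this with $\operatorname{div}\mathbf{A}_n=0$, $\mathbf{A}_n\cdot\mathbf{n}=0$ and the classical div--curl (vector potential) elliptic estimate yields a bound on $\|\mathbf{A}_n\|_{W^{1,2}(\Omega_1)}$, hence on $\|\mathbf{A}_n\|_{L^6(\Omega_1)}$ by the Sobolev embedding. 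Writing $\nabla\phi_n=(\nabla\phi_n-i\rho\mathbf{A}_n\phi_n)+i\rho\mathbf{A}_n\phi_n$ and estimating $\|\mathbf{A}_n\phi_n\|_{L^2(\Omega)}\leq\|\mathbf{A}_n\|_{L^6(\Omega)}\|\phi_n\|_{L^3(\Omega)}$ (both factors already controlled, the second via $W^{1,2}(\Omega)\hookrightarrow L^6(\Omega)$), I obtain a bound on $\|\nabla\phi_n\|_{L^2(\Omega)}$; thus $\{\phi_n\}$ is bounded in $W^{1,2}(\Omega;\mathbb{C})$.

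Passing to a subsequence, $\phi_n\rightharpoonup\phi_0$ weakly in $W^{1,2}(\Omega;\mathbb{C})$ and $\mathbf{A}_n\rightharpoonup\mathbf{A}_0$ weakly in $W^{1,2}(\Omega_1;\mathbb{R}^3)$, and by the Rellich--Kondrachov theorem $\phi_n\to\phi_0$ strongly in $L^p(\Omega)$ and $\mathbf{A}_n\to\mathbf{A}_0$ strongly in $L^p(\Omega_1)$ for every $p<6$, and a.e. I would then pass to the limit term by term: $(|\phi_n|^2-\beta)^2\to(|\phi_0|^2-\beta)^2$ in $L^1(\Omega)$ from the $L^4$ convergence; $\operatorname{curl}\mathbf{A}_n\rightharpoonup\operatorname{curl}\mathbf{A}_0$ in $L^2(\Omega_1)$, so $\liminf_n K_0\|\operatorname{curl}\mathbf{A}_n-\mathbf{B}_0\|_{2,\Omega_1}^2\geq K_0\|\operatorname{curl}\mathbf{A}_0-\mathbf{B}_0\|_{2,\Omega_1}^2$ by weak lower semicontinuity of the norm; and $\mathbf{A}_n\phi_n\to\mathbf{A}_0\phi_0$ strongly in $L^2(\Omega)$ (split as $(\mathbf{A}_n-\mathbf{A}_0)\phi_n+\mathbf{A}_0(\phi_n-\phi_0)$ and use $L^3$--$L^6$ Hölder), so that $\nabla\phi_n-i\rho\mathbf{A}_n\phi_n\rightharpoonup\nabla\phi_0-i\rho\mathbf{A}_0\phi_0$ weakly in $L^2(\Omega)$ and again the $L^2$-norm is weakly lower semicontinuous. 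Adding the three estimates gives $m=\liminf_n J(\phi_n,\mathbf{A}_n)\geq J(\phi_0,\mathbf{A}_0)\geq m$, so $(\phi_0,\mathbf{A}_0)\in U$ attains the minimum.

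The step I expect to be the genuine obstacle is the coercivity in the magnetic variable: because of the gauge invariance, finiteness of the energy does not by itself control $\|\mathbf{A}_n\|_{W^{1,2}(\Omega_1)}$, so one must first reduce to the Coulomb gauge and then invoke an elliptic div--curl estimate, being careful about the harmonic fields when $\Omega_1$ is not simply connected. By contrast, the remaining ingredients — the $L^4$ bound on $\phi_n$, the Sobolev embeddings, the strong convergence of the product $\mathbf{A}_n\phi_n$, and the weak lower semicontinuity of the convex terms — are routine.
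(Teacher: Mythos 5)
Your proposal is correct and follows essentially the same route as the paper: fix the Coulomb (London) gauge so that $\operatorname{div}\mathbf{A}_n=0$ in $\Omega_1$ and $\mathbf{A}_n\cdot\mathbf{n}=0$ on $\partial\Omega_1$, obtain $W^{1,2}$ bounds for $\mathbf{A}_n$ from the curl term via the div--curl identity and for $\phi_n$ from the quartic and gradient terms, and conclude by Rellich--Kondrachov together with weak lower semicontinuity of the convex terms in $\nabla\phi-i\rho\mathbf{A}\phi$ and $\operatorname{curl}\mathbf{A}$. One minor slip: the control of $\|\phi_n\|_{L^3(\Omega)}$ should be justified by the already-established $L^4(\Omega)$ bound (since $\Omega$ is bounded), not by the embedding $W^{1,2}(\Omega)\hookrightarrow L^6(\Omega)$, which would be circular at that stage since the $W^{1,2}$ bound on $\phi_n$ is precisely what is being derived.
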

\begin{proof}
Denote $$\alpha_1=\inf_{(\phi,\mathbf{A}) \in U}  J(\phi,\mathbf{A}).$$

From the results in \cite{500}, fixing the gauge of London, we may find a minimizing sequence for $J$ such that
$$\alpha_1 \leq J(\phi_n,\mathbf{A}_n) < \alpha_1+\frac{1}{n},$$
$$\text{ div } \mathbf{A}_n=0, \text{ in } \Omega_1,$$
and $$\mathbf{A}_n \cdot \mathbf{n}=0, \text{ on } \partial \Omega_1, \; \forall n \in \mathbb{N}.$$

From $\text{ div } \mathbf{A}_n=0$ we get
$$\text{ curl curl }\mathbf{A}_n=-\nabla^2 \mathbf{A}_n,$$ so that from this and $$\mathbf{A}_n\cdot \mathbf{n} =0 \text{ on }
\partial \Omega_1,$$ we have that
$$\int_{\Omega_1} \text{ curl }\mathbf{A} \cdot \text{ curl }\mathbf{A}\;dx=\sum_{k=1}^3\int_{\Omega_1}\nabla (A_k)_n \nabla (A_k)_n\;dx,\; \forall n \in \mathbb{N}.$$

From the expression of $J$, for such a minimizing sequence, there exists a real $K_1>0$ such that
$$\|\nabla \mathbf{A}_n\|_{2,\Omega_1}=\|\text{ curl }\mathbf{A}_n\|_{2,\Omega_1} \leq K_1,\; \forall n \in \mathbb{N}.$$

From this and the boundary conditions in question, there exists a real $K_2>0$ such that
$$\|\mathbf{A}_n\|_{1,2,\Omega_1} \leq K_2,\; \forall n \in \mathbb{N}.$$

Thus from the Rellich-Kondrachov theorem, there exists $\mathbf{A}_0 \in U_2$ such that, up to a not relabeled
subsequence $$\mathbf{A}_n \rightharpoonup \mathbf{A}_0, \text{ weakly in } W^{1,2}(\Omega_1,\mathbb{R}^3),$$
and
$$\mathbf{A}_n \rightarrow \mathbf{A}_0, \text{ strongly in } L^2(\Omega_1,\mathbb{R}^3)\text{ and } L^4(\Omega_1;\mathbb{R}^3).$$

From this and the generalized H\"{o}lder inequality, we obtain
\begin{eqnarray}
\alpha_1+\frac{1}{n} > J(\phi_n,\mathbf{A}_n) &\geq& \frac{\gamma}{2}\int_\Omega |\nabla \phi_n|^2\;dx
\nonumber \\ &&-K_3\|\mathbf{A}_n\|_{4,\Omega}
\|\nabla \phi_n\|_{2,\Omega}\|\phi_n\|_{4,\Omega}
+\frac{\gamma}{2}\int_\Omega |\mathbf{A}_n|^2|\phi_n|^2\;dx \nonumber \\ &&
+\frac{\alpha}{2}\int_\Omega |\phi_n|^4\;dx-\alpha\beta \int_{\Omega}|\phi_n|^2\;dx \nonumber \\ &&
K_0 \|\text{ curl } \mathbf{A}_n-\mathbf{B}_0\|_{2,\Omega_1}^2 +K_5 \nonumber \\ &\geq&
\frac{\gamma}{2}\int_\Omega |\nabla \phi_n|^2\;dx
\nonumber \\ &&-K_4
\|\nabla \phi_n\|_{2,\Omega}\|\phi_n\|_{4,\Omega}
+\frac{\gamma}{2}\int_\Omega |\mathbf{A}_n|^2|\phi_n|^2\;dx \nonumber \\ &&
+\frac{\alpha}{2}\int_\Omega |\phi_n|^4\;dx-\alpha \beta \int_{\Omega}|\phi_n|^2\;dx \nonumber \\ &&
K_0\|\text{ curl } \mathbf{A}_n-\mathbf{B}_0\|_{2,\Omega_1}^2 +K_5, \forall n \in \mathbb{N}.
\end{eqnarray}

From this, there exists $K_6>0$ such that
$$\|\nabla \phi_n\|_{2,\Omega} \leq K_6,$$
and
$$\|\phi_n\|_{4,\Omega}\leq K_6,\; \forall n \in \mathbb{N}.$$

Thus, from the Rellich-Kondrachov theorem, there exists $\phi_0 \in U_1$ such that, up to a not relabeled subsequence,
$$\phi_n \rightharpoonup \phi_0, \text{ weakly in } W^{1,2}(\Omega;\mathbb{C}),$$
and
$$\phi_n \rightarrow \phi_0, \text{ strongly in } L^2(\Omega;\mathbb{C}) \text{ and } L^4(\Omega;\mathbb{C}).$$

From such results, we may  obtain
$$\nabla \phi_n -i\rho \mathbf{A}_n \phi_n \rightharpoonup \nabla \phi_0-i \rho \mathbf{A}_0\phi_0,
\text{ weakly in } L^2(\Omega;\mathbb{C}).$$
Therefore, from this, the convexity of $J$ in $v=\nabla \phi-i\rho\mathbf{A}\phi$ and $w=\text{ curl } \mathbf{A},$ and from
$$\phi_n \rightarrow \phi_0, \text{ strongly in } L^2(\Omega;\mathbb{C}) \text{ and } L^4(\Omega;\mathbb{C})$$
and $$\text{ curl }\mathbf{A}_n \rightharpoonup \text{ curl }\mathbf{A}_0, \text{ weakly in } L^2(\Omega_1,\mathbb{R}^3),$$
we get
$$\alpha_1=\inf_{(\phi,\mathbf{A}) \in U} J(\phi,\mathbf{A})=\liminf_{n \rightarrow \infty}J(\phi_n,\mathbf{A}_n)
\geq J(\phi_0,\mathbf{A}_0).$$

The proof is complete.
\end{proof}

\section{A brief initial description of our proposal for duality}
Let $\Omega \subset \mathbb{R}^3$ be an open, bounded and connected set with a regular boundary denoted by $\partial \Omega$. Let $U=W_0^{1,2}(\Omega)$ and let $J:U \rightarrow \mathbb{R}$ be a functional defined by

$$J(u)=G_1( u)+F_1(u)-\langle u,f \rangle_{L^2},$$

where

$$G_1(u)=\frac{\gamma}{2}\int_\Omega \nabla u \cdot \nabla u\;dx,$$
and
$$F_1(u)=\frac{\alpha}{2}\int_\Omega (u^2-\beta)^2\;dx,$$
where $\alpha,\beta$ and $\gamma$ are positive real constants and $f \in L^2(\Omega)$.

Observe that there exists $\eta \in \mathbb{R}$ such that

$$\eta=\inf_{u \in U} J(u)=J(u_0),$$ for some $u_0 \in U.$
(We recall that the existence of a global minimizer may be proven by the direct method of the calculus of variations).

In our approach, we combine the ideas of J.J. Telega \cite{2900,85} generalizing the approach in  Ekeland and Temam \cite{[6]} for establishing the dual functionals
through the Legendre transform definition, with a D.C. approach for non-convex optimization developed by J.F. Toland, \cite{12}.

At this point we would define the functionals $F:U \rightarrow \mathbb{R}$
and $G:U \rightarrow \mathbb{R}$, where

$$F(u)=\frac{\gamma}{2}\int_\Omega \nabla u \cdot \nabla u\;dx+\frac{K}{2}\int_\Omega u^2\;dx,$$

and

$$G(u,v)=-\frac{\alpha}{2}\int_\Omega (u^2-\beta+v)^2\;dx+\frac{K}{2} \int_\Omega u^2\;dx+\langle u,f \rangle_{L^2},$$
so that \begin{eqnarray}J(u)&=&F(u)-G(u,0) \nonumber \\ &=&
\frac{\gamma}{2}\int_\Omega \nabla u \cdot \nabla u\;dx+\frac{\alpha}{2}\int_\Omega (u^2-\beta)^2\;dx-\langle u,f \rangle_{L^2}, \end{eqnarray}
 where such a functional, for a large $K>0$,  is represented as a difference of two convex functionals
in a large domain region proportional to $K>0$.

The second step is to define the corresponding dual functionals $F^*$, $G^*$, where

\begin{eqnarray}
F^*(v^*_1)&=&\sup_{u \in U}\{ \langle u,v_1^* \rangle_{L^2}-F(u)\}
\nonumber \\ &=& \frac{1}{2}\int_\Omega \frac{(v_1^*)^2}{K-\gamma\nabla^2}\;dx
\end{eqnarray}

and

\begin{eqnarray}G^*(v_1^*,v_0^*)&=&\sup_{ u \in U}\inf_{v \in L^2} \{ \langle u,v_1^* \rangle_{L^2}-\langle v,v_0^* \rangle_{L^2} -G(u,v)\}
\nonumber \\ &=& \sup_{u \in U} \inf_{v \in L^2} \{ \langle u,v_1^* \rangle_{L^2}-\langle v,v_0^* \rangle_{L^2} +\frac{\alpha}{2}\int_\Omega (u^2-\beta+v)^2\;dx \nonumber \\ &&-\frac{K}{2}\int_\Omega u^2\;dx-\langle u,f \rangle_{L^2}\}
\nonumber \\ &=& \sup_{u \in U} \inf_{w \in L^2} \{ \langle u,v_1^* \rangle_{L^2}-\langle w-u^2+\beta,v_0^* \rangle_{L^2} +\frac{\alpha}{2}\int_\Omega w^2\;dx \nonumber \\ &&-\frac{K}{2}\int_\Omega u^2\;dx-\langle u,f \rangle_{L^2}\} \nonumber \\ &=&
\sup_{u \in U} \inf_{w \in L^2} \{ \langle u,v_1^* \rangle_{L^2}-\langle w,v_0^* \rangle_{L^2} +\frac{\alpha}{2}\int_\Omega w^2\;dx-\langle u^2,v_0^*\rangle_{L^2} \nonumber \\ && -\frac{K}{2}\int_\Omega u^2\;dx -\langle u,f \rangle_{L^2}\nonumber \\
&&-\beta \int_\Omega v_0^*\;dx\} \nonumber \\ &=&
-\frac{1}{2}\int_\Omega \frac{(v_1^*-f)^2}{2v_0^*-K}\;dx-\frac{1}{2\alpha}\int_\Omega (v_0^*)^2\;dx\nonumber \\
&&-\beta \int_\Omega v_0^*\;dx,\end{eqnarray}
if $-2v_0^*+K>0$ in $\overline{\Omega}.$

Defining $$E=\{v_0^* \in C(\overline{\Omega}) \text{ such that } -2v_0^*+K>K/2, \text{ in } \overline{\Omega}\},$$ where $K>0$ is such that $$\frac{1}{\alpha}> \frac{32K_2^2}{K^3},$$ for some appropriate $K_2>0$ to be specified, it may be proven that for $K>0$ sufficiently large,

$$\inf_{v_1^* \in L^2}\sup_{v_0^* \in E}\{ -F^*(v_1^*)+G^*(v_1^*,v_0^*)\} \geq \inf_{u \in U} J(u).$$

Equality concerning this last result may be obtained in a local extremal context and, under appropriate optimality conditions to be specified, also for global optimization.

At this point we highlight the maximization in $v_0^*$ with the restriction $v_0^* \in E$ does not demand a Lagrange
multiplier, since for the value of $K>0$ specified the restriction is not active.

We emphasize this approach is original and substantially different from all those, of other authors, so far known.

Finally, for a more general model, in the next section we formally prove that a critical point for the primal formulation necessarily corresponds to a critical point of the dual formulation. The reciprocal may be also proven.

\section{The duality principle  for a local extremal context}
In this section we state and prove the concerning duality principle. We recall the existence of a global minimizer
for the related functional has been proven at section \ref{section2} of this article. At this point it is also worth mentioning an extensive study on duality theory and applications for such and similar models is developed in \cite{120}.

\begin{thm} Let $\Omega,\Omega_1 \subset \mathbb{R}^3$ be open, bounded and connected sets with regular
(Lipschitzian) boundaries denoted by $\partial \Omega$ and $\partial \Omega_1$ respectively.

Assume $\Omega_1$ is convex and $\overline{\Omega} \subset \Omega_1$. Consider the functional $J:U \rightarrow \mathbb{R}$ where
\begin{eqnarray}
J(\phi,\mathbf{A})&=& \frac{\gamma}{2}\int_\Omega |\nabla \phi -i \rho \mathbf{A}\phi|^2\;dx \nonumber \\
&& +\frac{\alpha}{2} \int_\Omega (|\phi|^2 - \beta)^2\;dx +\frac{1}{8\pi}\|\text{ curl } \mathbf{A}-\mathbf{B}_0\|_{0, \Omega_1}^2
\end{eqnarray}
where $\alpha,\beta,\gamma, \rho$ are positive real constants, $i$ is the imaginary unit and
$$U=U_1 \times U_2,$$
$$U_1=C^1(\overline{\Omega};\mathbb{C}),\;\; U_2=C^1(\overline{\Omega_1};\mathbb{R}^3),$$
both with the norm $\|\cdot\|_{1,\infty}.$

Moreover, $$\phi :\Omega \rightarrow \mathbb{C}$$ is the order parameter,
$$\mathbf{A}:\Omega_1 \rightarrow \mathbb{R}^3$$ is the magnetic potential and $\mathbf{B}_0 \in C^{1}(\overline{\Omega_1},\mathbb{R}^3)$ is  an external magnetic field.

Defining, $$B_2=\{\mathbf{A} \in C^1(\overline{\Omega_1}; \mathbb{R}^3)\;:\; \text{ div } \mathbf{A}=0 \text{ in } \Omega_1, \; \mathbf{A} \cdot \mathbf{n}=0, \text{ on } \partial \Omega_1\},$$
where $\mathbf{n}$ denotes the outward normal to $\partial \Omega_1,$
suppose $(\phi_0, \mathbf{A}_0) \in C^1(\overline{\Omega} ;\mathbb{C}) \times B_2$ is such that
$$\|\phi_0\|_\infty \leq K_2,$$ for some appropriate $K_2>0$,
$$\delta J(\phi_0,\mathbf{A}_0)=\mathbf{0}$$ and
$$\delta^2 J(\phi_0,\mathbf{A}_0)> \mathbf{0}.$$

Denoting also generically $$(\nabla-i\rho \mathbf{A})^*(\nabla-i\rho\mathbf{A})=|\nabla-i\rho\mathbf{A}|^2,$$
define $F:U \rightarrow \mathbb{R}$ by
$$F(\phi,\mathbf{A})=\frac{\gamma}{2}\int_\Omega |\nabla \phi -i \rho \mathbf{A}\phi|^2\;dx+\frac{K}{2}\int_\Omega |\phi|^2\;dx,$$

$G:U \times C(\Omega) \rightarrow \mathbb{R}$ by
\begin{eqnarray}G(\phi,\mathbf{A},v)&=&-\frac{\alpha}{2} \int_\Omega (|\phi|^2 - \beta+v)^2\;dx -\frac{1}{8\pi}\|\text{ curl } \mathbf{A}-\mathbf{B}_0\|_{0, \Omega_1}^2+\frac{K}{2}\int_\Omega |\phi|^2\;dx
,\end{eqnarray}
\begin{eqnarray}
F^*(v_1^*,\mathbf{A})&=&\sup_{\phi \in U_1} \{\langle \phi,v_1^*\rangle_{L^2}-F(\phi,\mathbf{A})\}
\nonumber\\ &=&\frac{1}{2} \int_\Omega \frac{(v_1^*)^2}{(\gamma|\nabla-i\rho \mathbf{A}|^2+K)}\;dx,
\end{eqnarray}
\begin{eqnarray}
\hat{G}^*(v_1^*,v_0^*,\mathbf{A})&=&
\sup_{\phi \in U_1} \inf_{v \in C(\Omega)}\{\langle \phi,v_1^*\rangle_{L^2}-\langle v,v_0^*\rangle_{L^2}-G(\phi,\mathbf{A},v)\}
\nonumber \\ &=&-\frac{1}{2}\int_\Omega \frac{(v_1^*)^2}{2v_0^*-K}\;dx \nonumber \\ && -\frac{1}{2\alpha}\int_\Omega (v_0^*)^2\;dx-\beta \int_\Omega v_0^*\;dx \nonumber \\ &&+\frac{1}{8\pi}
\|\text{ curl }\mathbf{A}-\mathbf{B}_0\|_{0,\Omega_1}^2,\end{eqnarray}
if $$-2v_0^*+K>0 \text{ in } \overline{\Omega},$$
and
\begin{eqnarray}
J^*(v_1^*,v_0^*,\mathbf{A})&=& -F^*(v_1^*,\mathbf{A})+\hat{G}^*(v_1^*,v_0^*,\mathbf{A})
\nonumber \\ &=&-\frac{1}{2} \int_\Omega \frac{(v_1^*)^2}{(\gamma|\nabla-i\rho \mathbf{A}|^2+K)}\;dx
\nonumber \\ && -\frac{1}{2}\int_\Omega \frac{(v_1^*)^2}{2v_0^*-K}\;dx \nonumber \\ &&
-\frac{1}{2\alpha}\int_\Omega (v_0^*)^2\;dx-\beta \int_\Omega v_0^*\;dx \nonumber \\ &&+\frac{1}{8\pi}
\|\text{ curl }\mathbf{A}-\mathbf{B}_0\|_{0,\Omega_1}^2.\end{eqnarray}

Furthermore, define $$\hat{v}_0^*=\alpha(|\phi_0|^2 -\beta),$$

$$\hat{v}_1^*=(2\hat{v}_0^*-K)\phi_0,$$
and $$E=\{v_0^* \in C(\overline{\Omega})\;:\; -2v_0^*+K>K/2, \text{ in } \overline{\Omega}\}.$$
Under such hypotheses and assuming also $$\hat{v}_0^* \in E,$$ we have
$$\delta J^*(\hat{v}_1^*,\hat{v}_0^*,\mathbf{A}_0)=\mathbf{0}.$$
$$J(\phi_0,\mathbf{A}_0)=J^*(\hat{v}_1^*,\hat{v}_0^*,\mathbf{A}_0).$$
Moreover, defining $$J_1^*(v_1^*, \mathbf{A})=\sup_{v_0^* \in E}J^*(v_1^*,v_0^*,\mathbf{A}),$$  for $K>0$ such that $$\frac{1}{\alpha}> \frac{8 K_2^2}{K}$$ and
sufficiently large, we have $$\delta J^*_1(\hat{v}_1^*,\mathbf{A}_0)=\mathbf{0},$$
$$\frac{\partial^2 J^*_1(\hat{v}_1^*,\mathbf{A}_0)}{\partial (v_1^*)^2}> \mathbf{0}$$ so that there exist $r,r_1>0$ such that

\begin{eqnarray}
J(\phi_0,\mathbf{A}_0)&=&\min_{(\phi,\mathbf{A}) \in B_r(\phi_0,\mathbf{A}_0)}J(\phi,\mathbf{A})\nonumber \\ &=&\inf_{v_1^*\in B_{r_1}(\hat{v}_1^*)} J^*_1(v_1^*,\mathbf{A}_0)
\nonumber \\ &=& J^*_1(\hat{v}_1^*,\mathbf{A}_0) \nonumber \\ &=&
\inf_{v_1^* \in B_{r_1}(\hat{v}_1^*)}\left\{ \sup_{v_0^* \in E} J^*(v_1^*,v_0^*,\mathbf{A}_0)\right\}
\nonumber \\ &=& J^*(\hat{v}_1^*,\hat{v}_0^*,\mathbf{A}_0).\end{eqnarray}

\end{thm}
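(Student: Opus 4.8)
The plan is to exploit the Legendre-transform structure tying the primal functional $J$ to its D.C.\ dual $J^*$, proving in turn the first-order (critical-point) claims, the value equality, and finally the local second-order (minimum) claims. First I would write down the Euler--Lagrange system encoded in $\delta J(\phi_0,\mathbf{A}_0)=\mathbf{0}$: the $\phi$-variation gives $\gamma|\nabla-i\rho\mathbf{A}_0|^2\phi_0+\alpha(|\phi_0|^2-\beta)\phi_0=0$ in $\Omega$ with its natural boundary condition, and the $\mathbf{A}$-variation (within $B_2$, using the London gauge) gives the corresponding equation for $\mathbf{A}_0$. Inserting $\hat v_0^*=\alpha(|\phi_0|^2-\beta)$ and $\hat v_1^*=(2\hat v_0^*-K)\phi_0$, the first equation becomes $(\gamma|\nabla-i\rho\mathbf{A}_0|^2+K)\phi_0=\hat v_1^*$, i.e. $\phi_0=\hat v_1^*/(\gamma|\nabla-i\rho\mathbf{A}_0|^2+K)$, which is exactly the Legendre relation $\partial F^*/\partial v_1^*=\phi_0$. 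Then I would differentiate $J^*$ componentwise: $\partial J^*/\partial v_1^* = -v_1^*/(\gamma|\nabla-i\rho\mathbf{A}_0|^2+K)-v_1^*/(2v_0^*-K)$, which at $(\hat v_1^*,\hat v_0^*)$ equals $-\phi_0+\phi_0=0$ since $\hat v_1^*/(2\hat v_0^*-K)=\phi_0$ by definition; $\partial J^*/\partial v_0^* = (v_1^*)^2/(2v_0^*-K)^2-v_0^*/\alpha-\beta$, which at the point equals $|\phi_0|^2-(|\phi_0|^2-\beta)-\beta=0$; and $\partial J^*/\partial \mathbf{A}$ involves only the $\gamma|\nabla-i\rho\mathbf{A}|^2$ piece of $-F^*$ and the curl term, so using $\phi_0=\hat v_1^*/(\gamma|\nabla-i\rho\mathbf{A}_0|^2+K)$ one gets $\partial_{\mathbf{A}}(-F^*(\hat v_1^*,\mathbf{A}))=\tfrac{\gamma}{2}\,\partial_{\mathbf{A}}\int_\Omega|\nabla\phi_0-i\rho\mathbf{A}\phi_0|^2\,dx$, hence $\partial J^*/\partial\mathbf{A}$ at $(\hat v_1^*,\hat v_0^*,\mathbf{A}_0)$ equals $\partial J/\partial\mathbf{A}$ at $(\phi_0,\mathbf{A}_0)$, which vanishes. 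This yields $\delta J^*(\hat v_1^*,\hat v_0^*,\mathbf{A}_0)=\mathbf{0}$.

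For the value equality I would invoke the two Fenchel identities valid at conjugate points: $F(\phi_0,\mathbf{A}_0)+F^*(\hat v_1^*,\mathbf{A}_0)=\langle\phi_0,\hat v_1^*\rangle_{L^2}$, and, from the saddle computation defining $\hat G^*$ (with the interior $v$-infimum attained at $v=0$, precisely because $\hat v_0^*=\alpha(|\phi_0|^2-\beta)$), $-G(\phi_0,\mathbf{A}_0,0)+\hat G^*(\hat v_1^*,\hat v_0^*,\mathbf{A}_0)=\langle\phi_0,\hat v_1^*\rangle_{L^2}$. Subtracting gives $J(\phi_0,\mathbf{A}_0)=F(\phi_0,\mathbf{A}_0)-G(\phi_0,\mathbf{A}_0,0)=-F^*(\hat v_1^*,\mathbf{A}_0)+\hat G^*(\hat v_1^*,\hat v_0^*,\mathbf{A}_0)=J^*(\hat v_1^*,\hat v_0^*,\mathbf{A}_0)$, which I would also double-check by direct substitution of the explicit formulas.

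For the reduced functional $J_1^*(v_1^*,\mathbf{A})=\sup_{v_0^*\in E}J^*(v_1^*,v_0^*,\mathbf{A})$ I would first observe that on $E$ the term $-\tfrac12\int_\Omega(v_1^*)^2/(2v_0^*-K)\,dx$ is convex in $v_0^*$ with second derivative bounded by $32(v_1^*)^2/K^3$ (since $|2v_0^*-K|>K/2$ there), while $-\tfrac1{2\alpha}\int_\Omega(v_0^*)^2\,dx$ contributes $-1/\alpha$; hence, using $\|\phi_0\|_\infty\le K_2$ to bound $\hat v_1^*$ together with the hypothesis $1/\alpha>8K_2^2/K$ and $K$ large, the map $v_0^*\mapsto J^*(v_1^*,v_0^*,\mathbf{A}_0)$ is strictly concave on $E$ for $v_1^*$ near $\hat v_1^*$. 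Its unique maximizer solves $\partial J^*/\partial v_0^*=0$, which at $v_1^*=\hat v_1^*$ is $\hat v_0^*\in E$, an interior point, so the constraint is inactive and no Lagrange multiplier is needed; by the implicit function theorem the maximizer depends $C^1$ on $v_1^*$ near $\hat v_1^*$, so $J_1^*$ is $C^2$ there and, by the envelope theorem, $\delta J_1^*(\hat v_1^*,\mathbf{A}_0)=\delta_{v_1^*}J^*(\hat v_1^*,\hat v_0^*,\mathbf{A}_0)=\mathbf{0}$. Differentiating once more, $\partial^2 J_1^*/\partial(v_1^*)^2$ is the Schur complement of the $v_0^*v_0^*$-block in the Hessian of $J^*$ at the point; I would prove it is positive by relating it, through the Legendre correspondence $\phi_0\leftrightarrow\hat v_1^*$ and $\hat v_0^*\leftrightarrow\alpha(|\phi_0|^2-\beta)$, to the inverse of the second variation $\delta^2 J(\phi_0,\mathbf{A}_0)$ with $\mathbf{A}$ frozen, which is positive by hypothesis. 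This is the step I expect to be the main obstacle: making the Hessian correspondence rigorous on the function spaces at hand — in particular handling the formal operator $\nabla^2$ appearing in $F^*$ — and controlling the sign of the Schur complement uniformly in a neighborhood of the point; note that $J^*$ is \emph{not} convex in $v_1^*$ before the $v_0^*$-maximization, so this step is genuinely required.

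Finally, strict convexity of $J_1^*(\cdot,\mathbf{A}_0)$ near $\hat v_1^*$ together with $\delta J_1^*(\hat v_1^*,\mathbf{A}_0)=\mathbf{0}$ yields $\inf_{v_1^*\in B_{r_1}(\hat v_1^*)}J_1^*(v_1^*,\mathbf{A}_0)=J_1^*(\hat v_1^*,\mathbf{A}_0)$ for some $r_1>0$; the strict positivity of $\delta^2 J(\phi_0,\mathbf{A}_0)$ gives $\min_{(\phi,\mathbf{A})\in B_r(\phi_0,\mathbf{A}_0)}J(\phi,\mathbf{A})=J(\phi_0,\mathbf{A}_0)$ for some $r>0$; and the value equality already established closes the chain $J(\phi_0,\mathbf{A}_0)=J_1^*(\hat v_1^*,\mathbf{A}_0)=\inf_{B_{r_1}(\hat v_1^*)}J_1^*(\cdot,\mathbf{A}_0)=\inf_{B_{r_1}(\hat v_1^*)}\sup_{v_0^*\in E}J^*(\cdot,v_0^*,\mathbf{A}_0)=J^*(\hat v_1^*,\hat v_0^*,\mathbf{A}_0)$, shrinking $r$ and $r_1$ if necessary so that all the local representations remain valid.
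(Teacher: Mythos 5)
Your proposal follows essentially the same route as the paper's own proof: derive the Euler--Lagrange system from $\delta J(\phi_0,\mathbf{A}_0)=\mathbf{0}$, verify the dual stationarity componentwise via the Legendre relations, obtain the value equality from the Fenchel identities at the conjugate point, use the envelope/chain rule for $J_1^*$ with the constraint $v_0^*\in E$ inactive, and identify $\partial^2 J_1^*/\partial(v_1^*)^2$ with $-\delta^2_{\phi\phi}J(\phi_0,\mathbf{A}_0)$ divided by the negative factor $(K+\gamma|\nabla-i\rho\mathbf{A}_0|^2)(2\hat{v}_0^*+4\alpha|\phi_0|^2-K)$, exactly as the paper computes. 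Only note the sign in your intermediate Legendre relation: the definitions force $\hat{v}_1^*=(2\hat{v}_0^*-K)\phi_0=-(\gamma|\nabla-i\rho\mathbf{A}_0|^2+K)\phi_0$, i.e.\ $\phi_0=-\hat{v}_1^*/(\gamma|\nabla-i\rho\mathbf{A}_0|^2+K)$, which is precisely what makes the two terms in $\partial J^*/\partial v_1^*$ cancel as you claim, so the conclusion stands unchanged.
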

\begin{proof} We start by proving that
$$\delta J^*(\hat{v}_1^*,\hat{v}_0^*,\mathbf{A}_0)=\mathbf{0}.$$

Observe that from $$\frac{\partial J(\phi_0,\mathbf{A}_0)}{\partial \phi}=0,$$ and $$\frac{\partial J(\phi_0,\mathbf{A}_0)}{\partial \mathbf{A}}=0,$$ we have
\begin{equation}\label{eq10} \left\{
\begin{array}{ll}
 \gamma\left|\nabla -i \rho\mathbf{A}_0\right|^2 \phi_0+2\alpha(|\phi_0|^2-\beta) \phi_0=0, & \text{ in } \Omega
 \\ \\
 \left(\nabla\phi_0-i \rho\textbf{A}_0\phi_0\right) \cdot \textbf{n}=0, &\text{ on } \partial\Omega,\end{array} \right.\end{equation}
and
\begin{equation} \left\{
\begin{array}{ll}
 \text{curl }(\text{curl } \textbf{A}_0)= \text{ curl } \textbf{B}_0+4\pi \tilde{J}_0, & \text{ in } \Omega
 \\ \\
 \text{curl }(\text{curl } \textbf{A}_0)=\text{ curl }\textbf{B}_0, & \text{ in } \Omega_1 \setminus \overline{\Omega},\end{array} \right.\end{equation}
 where $$\tilde{J}=-2i \gamma \rho Im\left[(\phi_0^*\nabla \phi_0)\right]-\gamma \rho^2|\phi_0|^2 \textbf{A}_0.$$

 Observe also that \begin{eqnarray}\frac{\partial J^*(\hat{v}_1^*, \hat{v}_0^*,\mathbf{A}_0)}{\partial v_0^*}
 &=& \frac{(\hat{v}_1^*)^2}{(2\hat{v}_0^*-K)^2}-\frac{\hat{v}_0^*}{\alpha}-\beta \nonumber \\ &=&
 |\phi_0|^2-\frac{\hat{v}_0^*}{\alpha}-\beta=0.\end{eqnarray}

 Summarizing, we have got
 $$\frac{\partial J^*(\hat{v}_1^*, \hat{v}_0^*,\mathbf{A}_0)}{\partial v_0^*}=0.$$

Moreover, from the first line in equation (\ref{eq10}), we obtain
$$\gamma\left|\nabla -i \rho\mathbf{A}_0\right|^2 \phi_0+K\phi_0+2\alpha(|\phi_0|^2-\beta) \phi_0-K\phi_0=0,  \text{ in } \Omega,$$
so that \begin{eqnarray}\hat{v}_1^*&=&(2\hat{v}_0^*-K)\phi_0
\nonumber \\ &=&2\alpha(|\phi_0|^2-\beta) \phi_0-K\phi_0\nonumber \\ &=&
-\gamma\left|\nabla -i \rho\mathbf{A}_0\right|^2 \phi_0-K\phi_0.\end{eqnarray}

Hence, $$\phi_0=\frac{(\hat{v}_1^*)}{2\hat{v}_0^*-K}=-\frac{\hat{v}_1^*}{ \gamma|\nabla-i\rho\mathbf{A}_0|^2+K},$$
and thus,
 \begin{eqnarray}\frac{\partial J^*(\hat{v}_1^*, \hat{v}_0^*,\mathbf{A}_0)}{\partial v_1^*}
 &=& -\frac{(\hat{v}_1^*)}{2\hat{v}_0^*-K}-\frac{\hat{v}_1^*}{ \gamma|\nabla-i\rho\mathbf{A}_0|^2+K}
 \nonumber \\ &=& -\phi_0+\phi_0=0,\end{eqnarray}
Also,
denoting \begin{eqnarray}H_1&=& \frac{\partial [\gamma|\nabla-i\rho\mathbf{A}_0|^2]}{\partial \mathbf{A}}\left[\frac{1}{2}\frac{(\hat{v}_1^*)^2}{ (\gamma|\nabla-i\rho\mathbf{A}_0)|^2+K)^2}\right]+\frac{1}{4\pi}\left\{\text{curl }(\text{curl } \textbf{A}_0)- \text{ curl } \textbf{B}_0\right\}
 \nonumber \\ &=&\frac{\partial [\gamma|\nabla-i\rho\mathbf{A}_0|^2]}{\partial \mathbf{A}}\left[\frac{|\phi_0|^2}{2}\right]+\frac{1}{4\pi}\left\{\text{curl }(\text{curl } \textbf{A}_0)- \text{ curl } \textbf{B}_0\right\}
 \nonumber \\ &=& \frac{1}{4\pi}\left\{\text{curl }(\text{curl } \textbf{A}_0)- \text{ curl } \textbf{B}_0\right\}- \tilde{J}_0, \end{eqnarray}

 and
  $$H_2=\frac{1}{4\pi}\left\{\text{curl }(\text{curl } \textbf{A}_0)- \text{ curl } \textbf{B}_0\right\},$$

 we get
 \begin{equation}
\frac{\partial J^*(\hat{v}_1^*, \hat{v}_0^*,\mathbf{A}_0)}{\partial \mathbf{A}}=\left \{
\begin{array}{ll}
 H_1 &  \text{ in } \Omega,
 \\
 H_2, & \text{ in }\Omega_1\setminus \overline{\Omega}.
  \end{array} \right.\end{equation}

%\begin{eqnarray}&&\frac{\partial J^*(\hat{v}_1^*, \hat{v}_0^*,\mathbf{A}_0)}{\partial \mathbf{A}}
%\nonumber \\  &=&-\frac{\partial \gamma|\nabla-i\rho\mathbf{A}_0)|^2}{\partial %\mathbf{A}}\left[\frac{(\hat{v}_1^*)^2}{ %(\gamma|\nabla-i\rho\mathbf{A}_0)|^2+K)^2}\right]+\frac{1}{4\pi}\left\{\text{curl }(\text{curl } \textbf{A}_0)- %\text{ curl } \textbf{B}_0\right\}
% \nonumber \\ &=& \frac{1}{4\pi}\left\{\text{curl }(\text{curl } \textbf{A}_0)- \text{ curl } \textbf{B}_0\right\}- %\tilde{J}, \text{ in } \Omega\end{eqnarray}

%Moreover,
%$$\frac{\partial J^*(\hat{v}_1^*, \hat{v}_0^*,\mathbf{A}_0)}{\partial \mathbf{A}}
% =\frac{1}{4\pi}\left\{\text{curl }(\text{curl } \textbf{A}_0)- \text{ curl } \textbf{B}_0\right\}=\mathbf{0}, %\text{ in } \Omega_1\setminus \Omega.$$

Summarizing,

$$\frac{\partial J^*(\hat{v}_1^*, \hat{v}_0^*,\mathbf{A}_0)}{\partial \mathbf{A}}=\mathbf{0}.$$

Such last results may be denoted by
$$\delta J^*(\hat{v}_1^*,\hat{v}_0^*,\mathbf{A}_0)=\mathbf{0}.$$

Recall now that $$J_1^*(v_1^*,\mathbf{A})=\sup_{v_0^* \in E} J^*(v_1^*,v_0^*, \mathbf{A}).$$

Thus, \begin{eqnarray} \frac{\partial J^*_1(\hat{v}_1^*,\mathbf{A}_0)}{\partial v_1^*} &=&
\frac{\partial J^*(\hat{v}_1^*,\hat{v}_0^*,\mathbf{A}_0)}{\partial v_1^*} \nonumber \\ &&+
\frac{\partial J^*(\hat{v}_1^*,\hat{v}_0^*,\mathbf{A}_0)}{\partial v_0^*} \frac{\partial \hat{v}_0^*}{\partial v_1^*} \nonumber \\ &=& \mathbf{0}
\end{eqnarray}
and
 $$\frac{\partial J^*_1(\hat{v}_1^*,\mathbf{A}_0)}{\partial \mathbf{A}} =
\frac{\partial J^*(\hat{v}_1^*,\hat{v}_0^*,\mathbf{A}_0)}{\partial \mathbf{A}}= \mathbf{0},$$
so that we may denote
$$\delta J_1^*(\hat{v}_1^*,\mathbf{A}_0)=\mathbf{0}.$$

Furthermore, we may easily  compute,

\begin{eqnarray}
J^*(\hat{v}_1^*,\hat{v}_0^*,\mathbf{A}_0)&=& -F^*(\hat{v}_1^*)+\hat{G}^*(\hat{v}_1^*,\hat{v}_0^*,\mathbf{A}_0)
\nonumber \\ &=& -\langle \phi_0,\hat{v}_1^*\rangle_{L^2}+F(\phi_0,\mathbf{A}_0)+\langle \phi_0,\hat{v}_1^*\rangle_{L^2}-\langle 0,\hat{v}_0^*\rangle_{L^2}-G(\phi_0,\mathbf{A}_0) \nonumber \\ &=& J(\phi_0,\mathbf{A}_0).
\end{eqnarray}

Observe that, in particular, we have
$$J_1^*(\hat{v}_1^*,\mathbf{A}_0)=J^*(\hat{v}_1^*,\hat{v}_0^*,\mathbf{A}_0),$$
where the concerning supremum is attained through the equation
$$\frac{\partial J^*(\hat{v}_1^*, \hat{v}_0^*,\mathbf{A}_0)}{\partial v_0^*}=0,$$
that is
\begin{eqnarray} &&\frac{(\hat{v}_1^*)^2}{(2\hat{v}_0^*-K)^2}-\frac{\hat{v}_0^*}{\alpha}-\beta \nonumber \\ &=&
 |\phi_0|^2-\frac{\hat{v}_0^*}{\alpha}-\beta=0.\end{eqnarray}
Taking the variation in $v_1^*$ in such an equation, we get
$$\frac{2 \hat{v}_1^*}{(2\hat{v}_0^*-K)^2}-\frac{ 4(\hat{v}_1^*)^2}{(2\hat{v}_0^*-K)^3} \frac{\partial \hat{v}^*_0}{\partial v_1^*}-\frac{1}{\alpha}\frac{\partial \hat{v}_0^*}{\partial v_1^*}=0,$$
so that
$$\frac{\partial \hat{v}_0^*}{\partial v_1^*}=\frac{ \frac{2 \phi_0}{(2v_0^*-K)}}{\frac{1}{\alpha}+\frac{4|\phi_0|^2}{2\hat{v}_0^*-K}},$$
where, as previously indicated,
$$\phi_0=\frac{\hat{v}_1^*}{2\hat{v}_0^*-K}.$$

At this point we observe that
\begin{eqnarray}&&\frac{\partial^2 J^*(\hat{v}_1^*,\mathbf{A}_0)}{\partial (v_1^*)^2}  \nonumber \\ &=&
\frac{\partial^2 J^*(\hat{v}_1^*,\hat{v}_0^*,\mathbf{A}_0)}{\partial (v_1^*)^2} \nonumber \\ &&
+\frac{\partial^2 J^*(\hat{v}_1^*,\hat{v}_0^*,\mathbf{A}_0)}{\partial v_1^* \partial v_0^*}\frac{\partial \hat{v}_0^*}{\partial v_1^*} \nonumber \\ &=&
-\frac{1}{\gamma|\nabla-i\rho \mathbf{A}_0|^2+K}-\frac{1}{2\hat{v}_0^*-K} \nonumber \\ &&
+\frac{\frac{4\alpha |\phi_0|^2}{(2\hat{v}_0^*-K)^2}}{\left[1+\frac{4\alpha |\phi_0|^2}{2\hat{v}_0^*-K}\right]}
\nonumber \\ &=&
\frac{-2\hat{v}_0^*-4\alpha |\phi_0|^2+K -\gamma|\nabla -i \rho\mathbf{A}_0|^2-K}{(K+\gamma|\nabla -i \rho\mathbf{A}_0|^2)(2\hat{v}_0^*+4\alpha |\phi_0|^2-K)} \nonumber \\ &=&
\frac{-\delta^2_{\phi\phi}J(\phi_0,\mathbf{A}_0)}{(K+\gamma|\nabla -i \rho\mathbf{A}_0|^2)(2\hat{v}_0^*+4\alpha |\phi_0|^2-K)} \nonumber \\ &>& \mathbf{0}.
\end{eqnarray}

Summarizing,
$$\frac{\partial^2 J^*(\hat{v}_1^*,\mathbf{A}_0)}{\partial (v_1^*)^2} > \mathbf{0}.$$

From these last results,
there exists $r,r_1>0$ such that

\begin{eqnarray}
J(\phi_0,\mathbf{A}_0)&=&\min_{(\phi,\mathbf{A}) \in B_r(\phi_0,\mathbf{A}_0)}J(\phi,\mathbf{A})\nonumber \\ &=&\inf_{v_1^*\in B_{r_1}(\hat{v}_1^*)} J^*_1(v_1^*,\mathbf{A}_0)
\nonumber \\ &=& J^*_1(\hat{v}_1^*,\mathbf{A}_0) \nonumber \\ &=&
\inf_{v_1^* \in B_{r_1}(\hat{v}_1^*)}\left\{ \sup_{v_0^* \in E} J^*(v_1^*,v_0^*,\mathbf{A}_0)\right\}
\nonumber \\ &=& J^*(\hat{v}_1^*,\hat{v}_0^*,\mathbf{A}_0).\end{eqnarray}
The proof is complete.
\end{proof}
\begin{remark} At this point of our  analysis and on, we consider a finite dimensional model version in a finite differences or finite elements context, even though the spaces and operators have not been relabeled. So, also in such a context, the expression $$\int_{\Omega} \frac{(v_1^*)^2}{2v_0^*-K}dx,$$ indeed means
$$(v_1^*)^T(2v_0^*-K\;I_d)^{-1}v_1^*$$ where $I_d$ denotes the identity matrix $n \times n$ and
$$2v_0^*-K\;I_d$$ denotes the diagonal matrix with the vector $$\{2v_0^*(i)-K\}_{n \times 1}$$
as diagonal, for some appropriate $n \in \mathbb{N}$ defined in the discretization process.
\end{remark}

\section{ A second duality principle}

In this section we present another duality principle, which is summarized by the next theorem.

\begin{thm} Let $\Omega \subset \mathbb{R}^3$ be an open, bounded and connected set with a regular
(Lipschitzian) boundary denoted by $\partial \Omega.$

Let $J:U \rightarrow \mathbb{R}$ be a functional defined by

$$J(u)=G(\Lambda u)-F(\Lambda u)-\langle u,f \rangle_{U},\; \forall u \in U,$$
where
$$U=W_0^{1,2}(\Omega),\; f \in L^2(\Omega), Y=Y^*=L^2,$$
$\Lambda :U \rightarrow Y,$ is a bounded linear operator which the
respective adjoint is denoted by $\Lambda^*: Y^* \rightarrow U^*$.

Suppose also $(G \circ \Lambda):U \rightarrow \mathbb{R}$ and $(F \circ \Lambda):U \rightarrow \mathbb{R}$
are Fr\'{e}chet differentiable functionals on $U$ and such that $J$ is bounded below.

Define $G_K^*:Y^* \rightarrow \mathbb{R}$ and $F_K^*:Y^* \rightarrow \mathbb{R}$ by

$$G_K^*(v^*)=\sup_{v \in Y}\left\{\langle v,v^* \rangle_{L^2}-G(v)-\frac{K}{2} \langle v,v \rangle_{L^2}\right\},$$
$$F_K^*(z^*)=\sup_{v \in Y_1}\left\{\langle v,z^* \rangle_{L^2}-F(v)-\frac{K}{2} \langle v,v \rangle_{L^2}\right\}.$$

Assume $(\hat{v}^*,\hat{z}^*, u_0) \in Y^* \times Y^* \times U$ is such that

$$\delta J^*(\hat{v}^*,\hat{z}^*,u_0)= \mathbf{0},$$

where $J^*:Y^* \times Y^* \times U \rightarrow \mathbb{R}$ is defined by

$$J^*(v^*,z^*,u)=-G_K^*(v^*)+F_K^*(z^*)+\langle u,\Lambda^*v^*-\Lambda^*z^*-f \rangle_{U}.$$

Suppose also $\delta^2 J(u_0) > \mathbf{0}$ and $K>0$ is sufficiently big so that

$$G_K^{**}(\Lambda u_0)=G_K(\Lambda u_0),$$ and

$$F_K^{**}(\Lambda u_0)=F_K(\Lambda u_0).$$

Denote also
$$A^+=\{ u \in U\;:\; u u_0  \geq 0, \text{ a.e.  in }\Omega\},$$
$$B^+=\{ u \in U\;:\; \delta^2 J(u)\geq \mathbf{0}\},$$
where we assume, there exists a linear function (in $|u|$) $H$  such that
$$\delta^2J(u)\geq \mathbf{0} \text{ if, and only if, } H(|u|) \geq \mathbf{0}.$$

Moreover, defining the set
$$E=A^+ \cap B^+,$$
we have that $E$ is convex, $$\delta J(u_0)=\mathbf{0},$$ so that
\begin{eqnarray}
J(u_0)&=&\inf_{u \in E} J(u) \nonumber \\ &=&
\inf_{u \in E}\left\{ \inf_{z^* \in Y^*} \left\{\sup_{v^* \in Y^*} J^*(v^*,z^*,u)\right\}\right\}
\nonumber \\ &=&
\inf_{u \in E}\left\{ \sup_{v^* \in Y^*} \left\{\inf_{z^* \in Y^*} J^*(v^*,z^*,u)\right\}\right\}
\nonumber \\ &=& J^*(\hat{v}^*, \hat{z}^*,u_0).
\end{eqnarray}
\end{thm}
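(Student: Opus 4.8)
\medskip

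The plan is to follow the same template that worked for the earlier local-extremal theorem, now in the abstract $(G,F,\Lambda)$ setting. First I would establish the critical-point correspondence: starting from $\delta J^*(\hat v^*,\hat z^*,u_0)=\mathbf 0$, writing out the three partial derivatives of $J^*$ gives $\Lambda u_0 \in \partial G_K^*(\hat v^*)$ (equivalently $\hat v^* = \delta G_K(\Lambda u_0)$ using the Legendre reciprocity available once $K$ is large enough that $G_K^{**}(\Lambda u_0)=G_K(\Lambda u_0)$), similarly $\Lambda u_0 \in \partial F_K^*(\hat z^*)$, and $\Lambda^*\hat v^* - \Lambda^*\hat z^* - f = \mathbf 0$ in $U^*$. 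Combining these and using that $\delta G_K(\Lambda u_0) - \delta F_K(\Lambda u_0) = \delta G(\Lambda u_0) - \delta F(\Lambda u_0)$ (the two $\tfrac{K}{2}\langle v,v\rangle$ terms cancel in the difference), I obtain $\Lambda^*\delta G(\Lambda u_0) - \Lambda^*\delta F(\Lambda u_0) - f = \mathbf 0$, i.e. $\delta J(u_0)=\mathbf 0$, which is one of the claimed conclusions. Then the value identity follows from Legendre-Fenchel equality at the contact point: $-G_K^*(\hat v^*) = -\langle \Lambda u_0,\hat v^*\rangle_{L^2} + G_K(\Lambda u_0)$ and $F_K^*(\hat z^*) = \langle \Lambda u_0,\hat z^*\rangle_{L^2} - F_K(\Lambda u_0)$; adding these to $\langle u_0,\Lambda^*\hat v^*-\Lambda^*\hat z^*-f\rangle_U$ and noting the latter is zero while the $K$-terms again cancel in $G_K - F_K$, one gets $J^*(\hat v^*,\hat z^*,u_0)=G(\Lambda u_0)-F(\Lambda u_0)-\langle u_0,f\rangle_U = J(u_0)$.

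\medskip

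Next I would handle the set $E = A^+ \cap B^+$. Convexity of $B^+$ is where the hypothesis on the linear function $H$ is used: since $\delta^2 J(u)\ge \mathbf 0 \iff H(|u|)\ge \mathbf 0$ with $H$ linear in $|u|$, and $|\cdot|$ is convex while $H$ is order-preserving in the relevant sense, $B^+$ is an intersection of half-spaces pulled back along a convex map, hence convex; $A^+$ is convex because $u\mapsto u u_0$ is linear in $u$ (for fixed $u_0$) and the constraint $uu_0\ge 0$ a.e. defines a convex cone; the intersection $E$ is therefore convex, and $u_0 \in E$ since $\delta^2 J(u_0)>\mathbf 0$ and trivially $u_0 u_0 = |u_0|^2 \ge 0$.

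\medskip

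The core of the argument, and the main obstacle, is the chain of equalities $J(u_0) = \inf_{u\in E} J(u) = \inf_{u\in E}\inf_{z^*}\sup_{v^*}J^*(v^*,z^*,u) = \inf_{u\in E}\sup_{v^*}\inf_{z^*}J^*(v^*,z^*,u) = J^*(\hat v^*,\hat z^*,u_0)$. For the first equality I would argue that on $E$ the functional $J$ is, by construction of $A^+$ and $B^+$, convex (or at least has no strictly-lower critical value than $u_0$): the condition $\delta^2 J(u)\ge \mathbf 0$ on $B^+$ plus the sign control on $A^+$ forces $u_0$ to be the global minimizer over $E$; this uses $\delta J(u_0)=\mathbf 0$ from the first part. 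For the inner min-max, fixing $u\in E$ the map $v^*\mapsto J^*(v^*,z^*,u)$ is concave (it is $-G_K^*(v^*)$ plus a linear term, and $G_K^*$ is convex as a sup of affine functions) and $z^*\mapsto J^*(v^*,z^*,u)$ is convex (it is $F_K^*(z^*)$ plus a linear term); with $K$ large enough to guarantee the biconjugation identities $G_K^{**}(\Lambda u)=G_K(\Lambda u)$, $F_K^{**}(\Lambda u)=F_K(\Lambda u)$ hold — and here I would note that these hold not only at $u_0$ but on a neighborhood, or on all of $E$ after possibly enlarging $K$ — the inner suprema/infima are attained and equal $G_K(\Lambda u) - F_K(\Lambda u) - \langle u,f\rangle_U + (\text{stationarity residual})$, which reduces to $J(u)$; a minimax theorem (the functional is concave-convex and one of the variables ranges over a set on which coercivity from the $+\tfrac K2\langle v,v\rangle$ penalty gives compactness of sublevel sets) justifies swapping $\inf_{z^*}$ and $\sup_{v^*}$. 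Finally, evaluating at the optimal $u_0$ and using that $(\hat v^*,\hat z^*)$ realizes the inner saddle there (which is exactly what $\delta J^*(\hat v^*,\hat z^*,u_0)=\mathbf 0$ encodes, together with the second-order information) closes the loop back to $J^*(\hat v^*,\hat z^*,u_0)$. I expect the delicate points to be: (i) verifying that the biconjugation identities propagate from the single point $u_0$ to the whole set $E$ (or a neighborhood thereof), which is what legitimizes replacing the conjugates by the original functionals inside the infimum over $u$; and (ii) making the minimax interchange rigorous, since the spaces $Y^*$ are infinite-dimensional and one must lean on the strong convexity injected by the $K$-terms to get the needed semicontinuity and coercivity.
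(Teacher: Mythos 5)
Most of your outline coincides with the paper's own argument: the critical-point correspondence via the Legendre reciprocity relations, the cancellation of the $K$-terms giving $\delta J(u_0)=\mathbf{0}$, the value identity $J^*(\hat v^*,\hat z^*,u_0)=J(u_0)$, and the weak-duality bound $\inf_{z^*}\sup_{v^*}J^*(v^*,z^*,u)\leq J(u)$ obtained from the Fenchel--Young inequality are exactly the paper's steps; moreover the two ``delicate points'' you flag (that the biconjugation identity is effectively invoked at every $u$, not only at $u_0$, and that the inf--sup/sup--inf interchange is asserted rather than proved) are indeed places where the paper itself is silent.

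However, your justification of the convexity of $E$ is wrong, and this is precisely the step for which the structure $E=A^+\cap B^+$ and the hypothesis on $H$ exist. You claim $B^+$ is convex because it is ``an intersection of half-spaces pulled back along a convex map.'' But $\{u:\,H(|u|)\geq \mathbf 0\}$ is a \emph{superlevel} set of a composition of an (increasing) affine map with the convex map $u\mapsto|u|$, and superlevel sets of convex functions are not convex. In the concrete model treated in the paper's third duality principle one has $H(|u|)=\sqrt{6\alpha}\,|u|-\sqrt{\gamma\nabla^2+2\alpha\beta}$ with $-\gamma\nabla^2-2\alpha\beta<\mathbf 0$, so $B^+$ is essentially the set where $|u|$ is large: it contains $u$ and $-u$ but not $0$, hence is not convex, and intersecting the convex cone $A^+$ with a non-convex $B^+$ does not by itself yield a convex set. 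The correct argument (the paper's) is different: for $u_1,u_2\in A^+\cap B^+$ the sign-alignment with $u_0$ forces $\operatorname{sign}u_1=\operatorname{sign}u_2$, hence $|\lambda u_1+(1-\lambda)u_2|=\lambda|u_1|+(1-\lambda)|u_2|$ \emph{exactly}, and then the linearity of $H$ gives $H(|\lambda u_1+(1-\lambda)u_2|)=\lambda H(|u_1|)+(1-\lambda)H(|u_2|)\geq\mathbf 0$, so the convex combination stays in $B^+$ (and clearly in $A^+$). It is therefore the intersection, not $B^+$, that is convex, and the equality case of the triangle inequality on the sign-aligned cone is the missing idea in your argument; without it the first link $J(u_0)=\inf_{u\in E}J(u)$ (which requires $E$ convex, $\delta^2J\geq\mathbf 0$ on $E$, $u_0\in E$ and $\delta J(u_0)=\mathbf 0$) is not established.
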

\begin{proof}
Observe that
\begin{eqnarray}
J^*(v^*,z^*,u)&=& -G_K^*(v^*)+F^*_K(z^*)+\langle u,\Lambda^*v^*-\Lambda_1^*z^*-f\rangle_{U}
\nonumber \\ &\leq& -\langle \Lambda u,v^* \rangle_{Y}+G(\Lambda u)+\frac{K}{2}\langle \Lambda u, \Lambda u \rangle_{Y} \nonumber \\ && +F_K^*(z^*)+\langle u,\Lambda^*v^*-\Lambda^*z^*-f\rangle_{U}
 \nonumber \\ &=&
 G(\Lambda u)+\frac{K}{2}\langle \Lambda u, \Lambda u \rangle_{Y} -\langle \Lambda u,z^* \rangle_{Y^*}
 +F_K^*(z^*)-\langle u,f \rangle_{L^2},
 \end{eqnarray}
$\; \forall u \in U,\; v^* \in Y^*,\; z^* \in Y^*.$

From this, we obtain
 \begin{eqnarray}
&&\inf_{z^* \in Y^*}\left\{\sup_{v^* \in Y^*}J^*(v^*,z^*,u)\right\}\nonumber \\ &\leq& \inf_{z^* \in Y^*}
\left\{ G(\Lambda u)+\frac{K}{2}\langle \Lambda u, \Lambda u \rangle_{Y} -\langle \Lambda u,z^* \rangle_{Y}
 +F_K^*(z^*)-\langle u,f\rangle_{U}\right\}\nonumber \\ &=& G(\Lambda u)+\frac{K}{2}\langle \Lambda u, \Lambda u \rangle_{Y}
\nonumber \\ &&-F(\Lambda u)-\frac{K}{2}\langle \Lambda u, \Lambda u \rangle_{Y}
\nonumber \\ &&-\langle u,f\rangle_{U} \nonumber \\ &=& J(u).
\end{eqnarray}
Hence, we may infer that
 \begin{eqnarray}\label{us12000}
&&\inf_{u \in E}\left\{\inf_{z^* \in Y_1^*}\left\{\sup_{v^* \in Y^*}J^*(v^*,z^*,u)\right\}\right\}
 \nonumber \\ &\leq& \inf_{u \in E} J(u).
\end{eqnarray}
 On the other hand, from $\delta J^*(\hat{v}^*,\hat{z}^*,u_0)=\mathbf{0},$ we have

$$\frac{\partial G_K^*(\hat{v}^*)}{\partial v^*}-\Lambda u_0=0,$$
so that from the Legendre transform properties, we obtain
$$\hat{v}^*=\frac{\partial G_K(\Lambda u_0)}{\partial v}$$ and
$$G_K^*(\hat{v}^*)=\langle \Lambda u_0, \hat{v}^* \rangle_{Y^*}-G_K(\Lambda
u_0).$$

Similarly, from the variation in $z^*$, we get
$$\frac{\partial F_K^*(\hat{z}^*)}{\partial z^*}-\Lambda u_0=0,$$
so that from the Legendre transform properties, we obtain
$$\hat{z}^*=\frac{\partial F_K(\Lambda u_0)}{\partial v}$$
and
$$F_K^*(\hat{z}^*)=\langle \Lambda u_0, \hat{z}^* \rangle_{Y^*}-F_K(\Lambda
u_0).$$

From the variation in $u$, we have
$$ \Lambda \hat{v}^*-\Lambda^* \hat{z}^*-f=0.$$

Joining the pieces, we have got
\begin{eqnarray}\label{us12005} J^*(\hat{v}^*,\hat{z}^*,u_0)&=&
-G_K^*(\hat{v}^*)+F_K^*(\hat{z}^*) \nonumber \\ &=&
-\langle \Lambda u_0, \hat{v}^* \rangle_{Y}+G_K(\Lambda
u_0)-F_K(\Lambda u_0)+\langle \Lambda u_0, \hat{z}^* \rangle_{Y}
\nonumber \\ &=& G(\Lambda u_0)-F(\Lambda u_0)-\langle u_0,f \rangle_{U} \nonumber \\ &=& J(u_0).
\end{eqnarray}
 Moreover, from
$$ \Lambda \hat{v}^*-\Lambda^* \hat{z}^*-f=0,$$
we also have

$$\Lambda^*\left(\frac{\partial G(\Lambda u_0)}{\partial v}\right)-\Lambda^*\left(\frac{\partial F(\Lambda u_0)}{\partial v}\right)-f=0,$$
so that
$$\delta J(u_0)=\mathbf{0}.$$
Finally, observe that if $u_1,u_2 \in A^+\cap B^+=E$ and $\lambda \in [0,1]$, then $$H(|u_1|)\geq \mathbf{0},$$
$$H(|u_2|)\geq \mathbf{0}$$ and also since $$\text{ sign } u_1=\text{ sign }u_2, \text{ in } \Omega,$$ we get
$$|\lambda u_1+(1-\lambda) u_2|=\lambda |u_1|+(1-\lambda)|u_2|,$$
so that, from the hypotheses on $H$, $$\lambda H(|u_1|)+(1-\lambda)H(|u_2|)=H(|\lambda u_1+(1-\lambda)u_2|)\geq \mathbf{0}$$ and thus, $$\delta^2J(\lambda u_1+(1-\lambda)u_2)\geq \mathbf{0}.$$

From this, we may infer that $E$ is convex.

Moreover, since $J$ is convex in $E$, and $$\delta J(u_0)=\mathbf{0},$$ from
(\ref{us12000}) and (\ref{us12005}), we have that
\begin{eqnarray}
J(u_0)&=&\inf_{u \in E} J(u) \nonumber \\ &=&
\inf_{u \in E}\left\{ \inf_{z^* \in Y^*} \left\{\sup_{v^* \in Y^*} J^*(v^*,z^*,u)\right\}\right\}
\nonumber \\ &=&
\inf_{u \in E}\left\{ \sup_{v^* \in Y^*} \left\{\inf_{z^* \in Y^*} J^*(v^*,z^*,u)\right\}\right\}
\nonumber \\ &=& J^*(\hat{v}^*, \hat{z}^*,u_0).
\end{eqnarray}

The proof is complete.
\end{proof}

\section{ A third duality principle}

Our third duality principle is summarized by the next theorem.

\begin{thm}\label{TH1}
Let $\Omega \subset \mathbb{R}^3$ be an open, bounded and connected set with a regular (Lipschitzian)
boundary denoted by $\partial \Omega.$ Consider the functional $J:U \rightarrow \mathbb{R}$ be defined by
$$J(u)=\frac{\gamma}{2}\int_\Omega \nabla u \cdot \nabla u\;dx+\frac{\alpha}{2}\int_\Omega (u^2-\beta)^2\;dx
-\langle u,f \rangle_{L^2},$$
where $\alpha,\beta,\gamma$ are positive real constants,
$U=W_0^{1,2}(\Omega)$, $f \in L^2(\Omega)$. Here we assume $$-\gamma \nabla^2-2 \alpha \beta < \mathbf{0}$$
in an appropriate matrix sense considering, as above indicated, a finite dimensional not relabeled model approximation, in a finite differences or finite elements context.

Define $F:U \rightarrow \mathbb{R}$
and $G:U \rightarrow \mathbb{R}$, where

$$F(u)=\frac{\gamma}{2}\int_\Omega \nabla u \cdot \nabla u\;dx+\frac{K}{2}\int_\Omega u^2\;dx,$$

and

$$G(u,v)=-\frac{\alpha}{2}\int_\Omega (u^2-\beta+v)^2\;dx+\frac{K}{2} \int_\Omega u^2\;dx+\langle u,f \rangle_{L^2}$$ so that
$$J(u)=F(u)-G(u,0).$$

Define also,

\begin{eqnarray}
F^*(v^*_1)&=&\sup_{u \in U}\{ \langle u,v_1^* \rangle_{L^2}-F(u)\}
\nonumber \\ &=& \frac{1}{2}\int_\Omega \frac{(v_1^*)^2}{K-\gamma\nabla^2}\;dx
\end{eqnarray}

and

\begin{eqnarray}G^*(v_1^*,v_0^*)&=&\sup_{ u \in U}\inf_{v \in L^2} \{ \langle u,v_1^* \rangle_{L^2}-\langle v,v_0^* \rangle_{L^2} -G(u,v)\}
 \nonumber \\ &=&
-\frac{1}{2}\int_\Omega \frac{(v_1^*-f)^2}{2v_0^*-K}\;dx-\frac{1}{2\alpha}\int_\Omega (v_0^*)^2\;dx\nonumber \\
&&-\beta \int_\Omega v_0^*\;dx,\end{eqnarray}
if $-2v_0^*+K>0$ in $\overline{\Omega}.$

Furthermore, denote $$C=\{v_0^* \in C(\overline{\Omega}) \text{ such that } -2v_0^*+K>K/2, \text{ in } \overline{\Omega}\},$$ where $K>0$ is such that $$\frac{1}{\alpha}> \frac{8K_2^2}{K},$$ for some appropriate $K_2>0.$

At this point suppose $u_0 \in U$ is such that $\delta J(u_0)=0,$ $\|\phi_0\|_\infty \leq K_2$ and
$$\delta^2J(u_0)> \mathbf{0}.$$

Define also,
$$A^+=\{u \in U \;:\; u u_0 \geq 0, \text{ in } \Omega\},$$
$$B^+=\{u \in U\;:\; \delta^2J(u)\geq \mathbf{0}\},$$
$$E=A^+ \cap B^+,$$
%$$B^*(v_0^*)=\left\{v_1^* \in Y^*\::\; \frac{v_1^*-f}{2v_0^*-K} u_0 \geq 0, \text{ in } \Omega\right\},$$
$$\hat{v}_0^*=\alpha (u_0^2-\beta),$$
$$\hat{v}_1^*=-\gamma \nabla^2 u_0+Ku_0$$.

Under such hypothesis, assuming also $\hat{v}_0^* \in C$ and denoting
$$J^*(v_1,v_0^*)=F^*(v_1^*)-G^*(v_1^*,v_0^*),$$

$$J_1^*(v_1^*)=\sup_{v_0^*\in B} J^*(v_1^*,v_0^*),$$ we have that there exists $r>0$ such that

\begin{eqnarray}
J(u_0)&=& \inf_{u \in E}J(u) \nonumber \\ &=& \inf_{v_1^* \in B_r(\hat{v}_1^*)}J^*(v_1^*) \nonumber \\ &=&
J^*(\hat{v}_1^*) \nonumber \\ &=& \inf_{v_1^* \in B_r(\hat{v}_1^*)}\left\{ \sup_{v_0^* \in C} J^*(v_1^*,v_0^*)\right\}
\nonumber \\ &=& J^*(\hat{v}_1^*,\hat{v}_0^*).
\end{eqnarray}
\end{thm}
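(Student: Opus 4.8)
The plan is to follow the same three-stage architecture used in the proof of the second duality principle, adapted to the explicit Ginzburg-Landau-type functional $J$ on $U=W_0^{1,2}(\Omega)$. First I would establish the critical-point correspondence: starting from $\delta J(u_0)=0$, namely $-\gamma\nabla^2 u_0+2\alpha(u_0^2-\beta)u_0-f=0$, I would verify directly by differentiating the explicit formulas for $F^*$ and $G^*$ that with the choices $\hat v_0^*=\alpha(u_0^2-\beta)$ and $\hat v_1^*=-\gamma\nabla^2 u_0+Ku_0$ one has $\partial J^*(\hat v_1^*,\hat v_0^*)/\partial v_0^*=0$ and $\partial J^*(\hat v_1^*,\hat v_0^*)/\partial v_1^*=0$. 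The first follows from $(\hat v_1^*)^2/(2\hat v_0^*-K)^2=u_0^2$ together with $\hat v_0^*/\alpha+\beta=u_0^2$; the second from the fact that $\hat v_1^*=(2\hat v_0^*-K)u_0$ (using the Euler--Lagrange equation rewritten as $2\alpha(u_0^2-\beta)u_0-Ku_0=-\gamma\nabla^2 u_0-Ku_0+Ku_0-Ku_0$, i.e.\ rearranging the $\pm Ku_0$ terms) so that $u_0$ equals both $\hat v_1^*/(2\hat v_0^*-K)$ and $-\hat v_1^*/(K-\gamma\nabla^2)$. This is essentially the computation already carried out in Section~4 for the magnetic model, specialized to the real scalar case.

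Second, I would evaluate $J^*(\hat v_1^*,\hat v_0^*)$ and show it equals $J(u_0)$. Using the Legendre-transform identities $F^*(\hat v_1^*)=\langle u_0,\hat v_1^*\rangle_{L^2}-F(u_0)$ and $G^*(\hat v_1^*,\hat v_0^*)=\langle u_0,\hat v_1^*\rangle_{L^2}-\langle 0,\hat v_0^*\rangle_{L^2}-G(u_0,0)$, which hold precisely because the suprema/infima defining $F^*$ and $G^*$ are attained at $u_0$ (resp.\ $v=0$) when the arguments are $\hat v_1^*,\hat v_0^*$, one gets $J^*(\hat v_1^*,\hat v_0^*)=F^*-G^*=-F(u_0)+G(u_0,0)+\langle u_0,\hat v_1^*\rangle-\langle u_0,\hat v_1^*\rangle = F(u_0)-G(u_0,0)=J(u_0)$ after a sign-tracking check. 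Then I would show that $J_1^*(\hat v_1^*)=\sup_{v_0^*\in C}J^*(\hat v_1^*,v_0^*)=J^*(\hat v_1^*,\hat v_0^*)$ and that $\hat v_0^*$ is the maximizer, which is where the hypothesis $\hat v_0^*\in C$ (equivalently $\|u_0\|_\infty\le K_2$ and $1/\alpha>8K_2^2/K$) enters: concavity of $v_0^*\mapsto J^*(\hat v_1^*,v_0^*)$ on $C$, or at least that the interior stationary point found above is the constrained maximum, must be argued from the structure $-\tfrac12\int (\hat v_1^*-f)^2/(2v_0^*-K)-\tfrac1{2\alpha}\int(v_0^*)^2-\beta\int v_0^*$ and the smallness condition ensuring the restriction $-2v_0^*+K>K/2$ is inactive at the optimum. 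Differentiating the optimality equation $(\hat v_1^*)^2/(2v_0^*-K)^2=v_0^*/\alpha+\beta$ in $v_1^*$ gives $\partial\hat v_0^*/\partial v_1^*$ explicitly, and then $\partial^2 J_1^*(\hat v_1^*)/\partial(v_1^*)^2$ can be computed via the chain rule exactly as in Section~4, yielding the expression $-\delta^2_{uu}J(u_0)/[(K-\gamma\nabla^2)(2\hat v_0^*+4\alpha u_0^2-K)]>\mathbf 0$, using $\delta^2 J(u_0)>\mathbf 0$ and the fact that $2\hat v_0^*+4\alpha u_0^2-K=2\alpha(u_0^2-\beta)+4\alpha u_0^2-K$; the smallness of $\alpha$ relative to $K$ again guarantees the denominator is negative so the quotient is positive, giving a strict local minimum of $J_1^*$ at $\hat v_1^*$.

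Third, I would handle the restricted-primal side $J(u_0)=\inf_{u\in E}J(u)$ with $E=A^+\cap B^+$. The convexity of $E$ follows exactly as in the second duality principle: on $A^+$ all elements share the sign of $u_0$, so $|\lambda u_1+(1-\lambda)u_2|=\lambda|u_1|+(1-\lambda)|u_2|$, and since for this quartic-type functional $\delta^2 J(u)=-\gamma\nabla^2+2\alpha(3u^2-\beta)\cdot$ depends on $u$ only through $u^2$, hence through $|u|$, the condition $\delta^2 J(u)\ge\mathbf 0$ translates into an affine condition $H(|u|)\ge\mathbf 0$ stable under this convex combination; thus $B^+$ (intersected with $A^+$) is convex. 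Then $J$ is convex on $E$, $u_0\in E$ with $\delta J(u_0)=0$, so $u_0$ minimizes $J$ over $E$. Finally I would chain the inequalities: for any $u$ and any $z^*$-type bound — here more simply, the weak duality $\inf_{v_1^*}J^*(v_1^*)\le$ something and $\sup_{v_0^*\in C}J^*(v_1^*,v_0^*)\ge$ — to sandwich $J^*(\hat v_1^*,\hat v_0^*)=J(u_0)=\inf_{u\in E}J(u)$ between $\inf_{v_1^*\in B_r}J_1^*$ and the achieved value, concluding all displayed quantities coincide. The main obstacle I anticipate is the rigorous justification that the constrained supremum over $v_0^*\in C$ is attained at the interior point $\hat v_0^*$ and equals the unconstrained stationary value — i.e.\ verifying the constraint is genuinely inactive and that $J^*(\hat v_1^*,\cdot)$ is concave (or unimodal) on $C$ — since this is precisely what forces the quantitative hypothesis $1/\alpha>8K_2^2/K$ with $K$ large, and the sign of the Hessian denominators throughout depends delicately on it; the rest is bookkeeping parallel to Sections~4 and~5.
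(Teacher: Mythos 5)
Your proposal follows essentially the same route as the paper's own proof: transferring stationarity from $u_0$ to $(\hat v_1^*,\hat v_0^*)$ via the Euler--Lagrange equation and the Legendre-transform identities, verifying $J^*(\hat v_1^*,\hat v_0^*)=J(u_0)$, computing $\partial \hat v_0^*/\partial v_1^*$ implicitly to get $\partial^2 J_1^*/\partial(v_1^*)^2=-\delta^2 J(u_0)/[(K-\gamma\nabla^2)(2\hat v_0^*+4\alpha u_0^2-K)]>\mathbf 0$ (with the same sign analysis from $\hat v_0^*\in C$, $\|u_0\|_\infty\le K_2$ and $1/\alpha>8K_2^2/K$), and using the sign-agreement/affine-in-$|u|$ argument for convexity of $E=A^+\cap B^+$ before chaining the equalities. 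The point you flag as the main obstacle (rigorously confirming the constrained supremum over $v_0^*\in C$ is attained at the interior stationary point) is indeed treated only briefly in the paper as well, so no essential divergence remains.
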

\begin{proof} We start by proving that
$$\delta J^*(\hat{v}_1^*,\hat{v}_0^*)=\mathbf{0}.$$

Observe that from $$\frac{\partial J(u_0)}{\partial u}=0,$$ we have
$$ -\gamma\nabla^2 u_0+2\alpha(|u_0|^2-\beta) u_0-f=0, \text{ in } \Omega.$$

 Observe also that \begin{eqnarray}\frac{\partial J^*(\hat{v}_1^*, \hat{v}_0^*)}{\partial v_0^*}
 &=& \frac{(\hat{v}_1^*)^2}{(2\hat{v}_0^*-K)^2}-\frac{\hat{v}_0^*}{\alpha}-\beta \nonumber \\ &=&
 |u_0|^2-\frac{\hat{v}_0^*}{\alpha}-\beta=0.\end{eqnarray}

 Summarizing, we have got
 $$\frac{\partial J^*(\hat{v}_1^*, \hat{v}_0^*)}{\partial v_0^*}=0.$$

Moreover, from the first line in equation (\ref{eq10}), we obtain
$$-\gamma\nabla^2 u_0+Ku_0+2\alpha(|u_0|^2-\beta) u_0-Ku_0-f=0,  \text{ in } \Omega,$$
so that \begin{eqnarray}\hat{v}_1^*&=&-\gamma \nabla^2u_0+K u_0
\nonumber \\ &=&-2\alpha(|u_0|^2-\beta) u_0+K u_0+f.\end{eqnarray}

Hence, $$u_0=-\frac{(\hat{v}_1^*-f)}{2\hat{v}_0^*-K}=\frac{\hat{v}_1^*}{ -\gamma \nabla^2+K},$$
and thus,
 \begin{eqnarray}\frac{\partial J^*(\hat{v}_1^*, \hat{v}_0^*)}{\partial v_1^*}
 &=& -\frac{(\hat{v}_1^*-f)}{2\hat{v}_0^*-K}-\frac{\hat{v}_1^*}{ -\gamma  \nabla^2+K}
 \nonumber \\ &=& u_0-u_0=0.\end{eqnarray}

Such last results may be denoted by
$$\delta J^*(\hat{v}_1^*,\hat{v}_0^*)=\mathbf{0}.$$

Recall now that $$J_1^*(v_1^*)=\sup_{v_0^* \in E} J^*(v_1^*,v_0^*).$$

Thus, \begin{eqnarray} \frac{\partial J^*_1(\hat{v}_1^*)}{\partial v_1^*} &=&
\frac{\partial J^*(\hat{v}_1^*,\hat{v}_0^*)}{\partial v_1^*} \nonumber \\ &&+
\frac{\partial J^*(\hat{v}_1^*,\hat{v}_0^*)}{\partial v_0^*} \frac{\partial \hat{v}_0^*}{\partial v_1^*} \nonumber \\ &=& \mathbf{0}.
\end{eqnarray}
Furthermore, we may easily  compute,
\begin{eqnarray}
J^*(\hat{v}_1^*,\hat{v}_0^*)&=& -F^*(\hat{v}_1^*)+\hat{G}^*(\hat{v}_1^*,\hat{v}_0^*)
\nonumber \\ &=& -\langle u_0,\hat{v}_1^*\rangle_{L^2}+F(\phi_0)+\langle u_0,\hat{v}_1^*\rangle_{L^2}-\langle 0,\hat{v}_0^*\rangle_{L^2}-G(u_0, \mathbf{0}) \nonumber \\ &=& J(u_0).
\end{eqnarray}
Observe that, in particular, we have
$$J_1^*(\hat{v}_1^*)=J^*(\hat{v}_1^*,\hat{v}_0^*),$$
where the concerning supremum is attained through the equation
$$\frac{\partial J^*(\hat{v}_1^*, \hat{v}_0^*)}{\partial v_0^*}=0,$$
that is
\begin{eqnarray} &&\frac{(\hat{v}_1^*-f)^2}{(2\hat{v}_0^*-K)^2}-\frac{\hat{v}_0^*}{\alpha}-\beta \nonumber \\ &=&
 |\phi_0|^2-\frac{\hat{v}_0^*}{\alpha}-\beta=0.\end{eqnarray}
Taking the variation in $v_1^*$ in such an equation, we get
$$\frac{2 (\hat{v}_1^*-f)}{(2\hat{v}_0^*-K)^2}-\frac{ 4(\hat{v}_1^*)^2}{(2\hat{v}_0^*-K)^3} \frac{\partial \hat{v}^*_0}{\partial v_1^*}-\frac{1}{\alpha}\frac{\partial \hat{v}_0^*}{\partial v_1^*}=0,$$
so that
$$\frac{\partial \hat{v}_0^*}{\partial v_1^*}=\frac{ \frac{2 u_0}{(2v_0^*-K)}}{\frac{1}{\alpha}+\frac{4|u_0|^2}{2\hat{v}_0^*-K}},$$
where, as previously indicated,
$$u_0=\frac{\hat{v}_1^*-f}{2\hat{v}_0^*-K}.$$

At this point we observe that
\begin{eqnarray}&&\frac{\partial^2 J^*(\hat{v}_1^*)}{\partial (v_1^*)^2}  \nonumber \\ &=&
\frac{\partial^2 J^*(\hat{v}_1^*,\hat{v}_0^*)}{\partial (v_1^*)^2} \nonumber \\ &&
+\frac{\partial^2 J^*(\hat{v}_1^*,\hat{v}_0^*)}{\partial v_1^* \partial v_0^*}\frac{\partial \hat{v}_0^*}{\partial v_1^*} \nonumber \\ &=&
-\frac{1}{-\gamma \nabla^2+K}-\frac{1}{2\hat{v}_0^*-K} \nonumber \\ &&
+\frac{\frac{4\alpha |u_0|^2}{(2\hat{v}_0^*-K)^2}}{\left[1+\frac{4\alpha |u_0|^2}{2\hat{v}_0^*-K}\right]}
\nonumber \\ &=&
\frac{-2\hat{v}_0^*-4\alpha |u_0|^2+K +\gamma \nabla^2-K}{(K- \gamma \nabla^2)(2\hat{v}_0^*+4\alpha |u_0|^2-K)} \nonumber \\ &=&
\frac{-\delta^2J(u_0)}{(K-\gamma \nabla^2)(2\hat{v}_0^*+4\alpha |u_0|^2-K)} \nonumber \\ &>& \mathbf{0}.
\end{eqnarray}

Summarizing,
$$\frac{\partial^2 J^*(\hat{v}_1^*)}{\partial (v_1^*)^2} > \mathbf{0}.$$
Finally, observe that
$$\delta^2J(u)=-\gamma \nabla^2+6\alpha u^2-2\alpha\beta \geq \mathbf{0},$$ if, and only if
$$H(u)\geq \mathbf{0},$$
where
$$H(u)=\sqrt{6\alpha}|u|-\sqrt{\gamma\nabla^2+2\alpha\beta}\geq \mathbf{0}.$$
Hence, if $u_1,u_2 \in A^+\cap B^+=E$ and $\lambda \in [0,1]$, then $$H(|u_1|)\geq \mathbf{0},$$
$$H(|u_2|)\geq \mathbf{0}$$ and also since $$\text{ sign } u_1=\text{ sign }u_2, \text{ in } \Omega,$$ we get
$$|\lambda u_1+(1-\lambda) u_2|=\lambda |u_1|+(1-\lambda)|u_2|,$$
so that,  $$H(|\lambda u_1+(1-\lambda)u_2|)=H(\lambda |u_1|+(1-\lambda)|u_2|)=\lambda H(|u_1|)+(1-\lambda)H(|u_2|)\geq \mathbf{0}$$ and thus, $$\delta^2J(\lambda u_1+(1-\lambda)u_2)\geq\mathbf{0}.$$

From this, we may infer that $E$ is convex.

From these last results,
there exists $r>0$ such that

\begin{eqnarray}
J(u_0)&=& \inf_{u \in E}J(u) \nonumber \\ &=& \inf_{v_1^* \in B_r(\hat{v}_1^*)}J^*(v_1^*) \nonumber \\ &=&
J^*(\hat{v}_1^*) \nonumber \\ &=& \inf_{v_1^* \in B_r(\hat{v}_1^*)}\left\{ \sup_{v_0^* \in E} J^*(v_1^*,v_0^*)\right\}
\nonumber \\ &=& J^*(\hat{v}_1^*,\hat{v}_0^*).
\end{eqnarray}
The proof is complete.
\end{proof}
\section{ A criterion for global optimality}
In this section we establish a criterion for global optimality.
\begin{thm}\label{TH2}Let $\Omega \subset \mathbb{R}^3$ be an open, bounded and connected set with a regular (Lipschitzian)
boundary denoted by $\partial \Omega.$

Consider the functional $J:U \rightarrow \mathbb{R}$  where

\begin{eqnarray} J(u)&=& \frac{\gamma}{2}\int_\Omega \nabla u \cdot \nabla u \;dx+\frac{\alpha}{2}\int_\Omega
(u^2-\beta)^2\;dx \nonumber \\ && -\langle u,f \rangle_{L^2}\end{eqnarray} where
$\alpha>0,\;\beta>0,\; \gamma>0$, $f \in C^1(\overline{\Omega})$
and $U=W_0^{1,2}(\Omega).$ Suppose also either $$f(x)>0,\; \forall x \in \Omega$$ or $$f(x)<0,\; \forall x \in \Omega.$$

 Here we assume $$-\gamma \nabla^2-2 \alpha \beta < \mathbf{0}$$
in an appropriate matrix sense considering, as above indicated, a finite dimensional not relabeled model approximation, in a finite differences or finite elements context.

Define
$$A^+=\{u \in U \;:\; u f \geq 0, \text{ in } \overline{\Omega}\},$$
$$B^+=\{u \in U\;:\; \delta^2J(u)\geq \mathbf{0}\}$$ and
$$E=A^+ \cap B^+.$$

Under such hypotheses, $E$ is convex and

$$\inf_{u \in E}J(u)=\inf_{u \in U} J(u)$$

\end{thm}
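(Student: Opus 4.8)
The plan is to show that the functional $J$, while non-convex on all of $U=W_0^{1,2}(\Omega)$, is convex when restricted to the set $E=A^+\cap B^+$, and that a global minimizer of $J$ over $U$ must already lie in $E$; combining these two facts yields $\inf_{u\in E}J(u)=\inf_{u\in U}J(u)$. First I would verify that $E$ is convex, reusing verbatim the argument given in the proof of Theorem \ref{TH1}: one computes $\delta^2 J(u)=-\gamma\nabla^2+6\alpha u^2-2\alpha\beta$, observes that $\delta^2 J(u)\geq\mathbf{0}$ is equivalent to $H(u):=\sqrt{6\alpha}\,|u|-\sqrt{\gamma\nabla^2+2\alpha\beta}\geq\mathbf{0}$ (in the finite-dimensional matrix sense adopted in the paper), and notes that for $u_1,u_2\in A^+$ one has $\operatorname{sign}u_1=\operatorname{sign}f=\operatorname{sign}u_2$, hence $|\lambda u_1+(1-\lambda)u_2|=\lambda|u_1|+(1-\lambda)|u_2|$; since $H$ is affine in $|u|$, $H$ of the convex combination is the convex combination of $H(|u_1|),H(|u_2|)\geq\mathbf 0$, so the combination stays in $B^+$, and it clearly stays in $A^+$. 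Thus $E$ is convex, and on $E$ the functional $J$ has $\delta^2 J\geq\mathbf 0$ throughout, so $J|_E$ is convex.

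The second and decisive step is to show that some global minimizer $u_0$ of $J$ over $U$ belongs to $E$. Existence of $u_0$ follows from the direct method (coercivity of $J$ on $W_0^{1,2}$, weak lower semicontinuity), exactly as recalled in Section~3. That $u_0\in B^+$ is automatic, since any interior minimizer satisfies the second-order necessary condition $\delta^2 J(u_0)\geq\mathbf 0$. The remaining point — that $u_0\in A^+$, i.e. $u_0 f\geq 0$ a.e. — is the part I expect to be the main obstacle. The natural idea is a truncation/sign argument: writing $u_0=u_0^+-u_0^-$ and comparing $J(u_0)$ with $J(|u_0|\operatorname{sign}f)$ or with the competitor obtained by flipping the sign of $u_0$ on the set where $u_0 f<0$. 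The Dirichlet term is unchanged under $u\mapsto|u|$ when $\operatorname{sign}f$ is constant (here we use the hypothesis that $f$ is of one sign throughout $\Omega$, so that $|u_0|\operatorname{sign}f\in W_0^{1,2}$ and $\|\nabla(|u_0|\operatorname{sign}f)\|_{2}=\|\nabla u_0\|_2$), the double-well term $\frac{\alpha}{2}\int_\Omega(u^2-\beta)^2\,dx$ depends only on $|u|$ hence is also unchanged, while the linear term $-\langle u,f\rangle_{L^2}$ strictly decreases unless $u_0 f\geq 0$ already. Hence the minimizer, after this replacement, satisfies $u_0 f\geq 0$; since the replacement does not increase $J$ and $u_0$ was a global minimizer, $u_0$ itself (or an equally-good minimizer) lies in $A^+$.

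Finally I would assemble the conclusion: let $m=\inf_{u\in U}J(u)=J(u_0)$ with $u_0\in E$ as just produced. Trivially $\inf_{u\in E}J(u)\geq m$ because $E\subset U$, and $\inf_{u\in E}J(u)\leq J(u_0)=m$; hence $\inf_{u\in E}J(u)=m=\inf_{u\in U}J(u)$, which is the assertion. The convexity of $E$ together with $J|_E$ convex is what makes this statement useful downstream — it allows the dual problem of Theorem \ref{TH1} to be applied over the convex set $E$ with no duality gap — but for the bare equality of infima only the production of a one-signed minimizer is needed. The subtle point to be careful about in the write-up is the regularity/admissibility of the competitor $|u_0|\operatorname{sign}f$ in $W_0^{1,2}(\Omega)$ and the behaviour on $\partial\Omega$; since $u_0\in W_0^{1,2}$ implies $|u_0|\in W_0^{1,2}$, and $f$ being continuous and of one fixed sign on $\Omega$, multiplication by the constant $\operatorname{sign}f\in\{\pm1\}$ causes no difficulty, so the argument goes through.
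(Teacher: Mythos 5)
Your proposal is correct, but it reaches the set $E$ by a different route than the paper. The paper never invokes an exact minimizer: it fixes $\varepsilon>0$, picks (by density) an $\varepsilon$-minimizer $u_\varepsilon\in C^1_c(\Omega)$ with $\delta^2J(u_\varepsilon)\geq\mathbf{0}$, flips its sign on the set where $u_\varepsilon f<0$ to produce $v_\varepsilon\in E$ with $J(v_\varepsilon)\leq J(u_\varepsilon)<\eta+\varepsilon$ (noting $\delta^2J(v_\varepsilon)=\delta^2J(u_\varepsilon)$ since $\delta^2J$ depends on $u$ only through $u^2$), and lets $\varepsilon\to 0$; the convexity of $E$ is then proved exactly as you do, via $H(u)=\sqrt{6\alpha}\,|u|-\sqrt{\gamma\nabla^2+2\alpha\beta}$ and the common sign forced by $A^+$. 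You instead produce an exact global minimizer $u_0$ by the direct method, get $u_0\in B^+$ from the second-order necessary condition, and then pass to the competitor $|u_0|\operatorname{sign} f$, which is the same sign-flip comparison the paper uses (Dirichlet and double-well terms unchanged, $-\langle u,f\rangle_{L^2}$ not increased). Your variant buys a cleaner justification of membership in $B^+$ (the paper's claim that an $\varepsilon$-minimizer with $\delta^2J\geq\mathbf{0}$ exists ``by density'' is asserted rather than argued), at the cost of invoking attainment of the infimum, which the paper's $\varepsilon$-argument avoids. One small point to make explicit in your write-up: the modified function $|u_0|\operatorname{sign} f$ must also be shown to lie in $B^+$, not only in $A^+$; this is immediate either because $\delta^2J$ depends only on $u_0^2$, which the sign flip preserves, or because the competitor is again a global minimizer and so inherits the second-order condition — but as written you only check $A^+$ for it.
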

\begin{proof} Define $$\eta=\inf_{u \in U} J(u).$$

Let $\varepsilon>0$

Hence, by density there exists $u_\varepsilon \in C^1_c(\Omega)$ such that
$$\delta^2J(u_\varepsilon) \geq  \mathbf{0}$$ and
$$\eta \leq J(u_\varepsilon) < \eta +\varepsilon.$$

Define  \begin{equation}
v_\varepsilon(x)=\left \{
\begin{array}{ll}
 u_\varepsilon(x), &  \text{ if }\; u_\varepsilon(x)f(x) \geq 0,
 \\
 -u_\varepsilon(x), &  \text{ if }\; u_\varepsilon(x)f(x) < 0,
  \end{array} \right.\end{equation}
$\forall x \in \overline{\Omega}.$

Observe that $$\delta^2J(v_\varepsilon)=\delta^2 J(u_\varepsilon) \geq \mathbf{0}$$
and
\begin{eqnarray} J(v_\varepsilon)&=& \frac{\gamma}{2}\int_\Omega \nabla v_\varepsilon \cdot \nabla v_\varepsilon \;dx+\frac{\alpha}{2}\int_\Omega
(v_\varepsilon^2-\beta)^2\;dx \nonumber \\ && -\langle v_\varepsilon,f \rangle_{L^2}
\nonumber \\ &\leq& \frac{\gamma}{2}\int_\Omega \nabla u_\varepsilon \cdot \nabla u_\varepsilon \;dx+\frac{\alpha}{2}\int_\Omega
(u_\varepsilon^2-\beta)^2\;dx \nonumber \\ && -\langle u_\varepsilon,f \rangle_{L^2} \nonumber \\ &=&
J(u_\varepsilon).\end{eqnarray}

Hence $$\eta \leq J(v_\varepsilon) \leq J(u_\varepsilon) < \eta+\varepsilon.$$

From this, since $v_\varepsilon \in E$,  we obtain
$$\eta \leq \inf_{u \in E}J(u) < \eta+\varepsilon.$$

Since $\varepsilon>0$ is arbitrary, we may infer that
$$\inf_{u \in U}J(u)=\eta=\inf_{ u \in E}J(u).$$

Finally, observe also that
$$\delta^2J(u)=-\gamma \nabla^2+6\alpha u^2-2\alpha\beta \geq \mathbf{0},$$ if, and only if
$$H(u)\geq \mathbf{0},$$
where
$$H(u)=\sqrt{6\alpha}|u|-\sqrt{\gamma\nabla^2+2\alpha\beta}\geq\mathbf{0}.$$
Hence, if $u_1,u_2 \in A^+\cap B^+=E$ and $\lambda \in [0,1]$, then $$H(|u_1|)\geq \mathbf{0},$$
$$H(|u_2|)\geq \mathbf{0}$$ and also since $$\text{ sign } u_1=\text{ sign }u_2, \text{ in } \Omega,$$ we get
$$|\lambda u_1+(1-\lambda) u_2|=\lambda |u_1|+(1-\lambda)|u_2|,$$
so that,  $$H(|\lambda u_1+(1-\lambda)u_2|)=H(\lambda |u_1|+(1-\lambda)|u_2|)=\lambda H(|u_1|)+(1-\lambda)H(|u_2|)\geq \mathbf{0}$$ and thus, $$\delta^2J(\lambda u_1+(1-\lambda)u_2)\geq\mathbf{0}.$$

From this, we may infer that $E$ is convex.

The proof is complete.
\end{proof}

\subsection{ The concerning duality principle}

In this section we develop a duality principle concerning the last optimality criterion established.
\begin{thm}
Let $\Omega \subset \mathbb{R}^3$ be an open, bounded and connected set with a regular (Lipschitzian)
boundary denoted by $\partial \Omega.$ Consider the functional $J:U \rightarrow \mathbb{R}$ be defined by
$$J(u)=\frac{\gamma}{2}\int_\Omega \nabla u \cdot \nabla u\;dx+\frac{\alpha}{2}\int_\Omega (u^2-\beta)^2\;dx
-\langle u,f \rangle_{L^2},$$
where $\alpha,\beta,\gamma$ are positive real constants,
$U=W_0^{1,2}(\Omega)$, $f \in C^1(\overline{\Omega})$ and we also denote $Y=Y^*=L^2(\Omega).$

Here we assume $$-\gamma \nabla^2-2 \alpha \beta < \mathbf{0}$$
in an appropriate matrix sense considering, as above indicated, a finite dimensional not relabeled model approximation, in a finite differences or finite elements context.

Assume also either $$f(x)>0,\; \forall x \in \Omega$$ or $$f(x)<0,\;\forall x \in \Omega.$$ 

Given $u \in U$ define
$$L_1(u)=\sup_{v_0^* \in Y^*} \left\{ \int_\Omega v_0 u^2\;dx-\frac{1}{2\alpha}\int_\Omega (v_0^*)^2
-\beta \int_\Omega v_0^*\;dx \right\},$$
and
$$L_2(u)=\sup_{v_0^* \in B^*} \left\{ \int_\Omega v_0 u^2\;dx-\frac{1}{2\alpha}\int_\Omega (v_0^*)^2
-\beta \int_\Omega v_0^*\;dx \right\},$$
where $$B^*=\{v_0^* \in Y^*\;:\; -2v_0^*+K > K/2 \text{ in } \overline{\Omega}\}$$ for some $K>0$ to be specified.

Let
$$U_1=\{u \in U \text{ such that } L_1(u)=L_2(u)\text{ and } \|u\|_{1,\infty} \leq \sqrt[4]{K}\}.$$

Moreover, define  $F:U \rightarrow \mathbb{R}$
and $G:U \rightarrow \mathbb{R}$, where

$$F(u)=\frac{\gamma}{2}\int_\Omega \nabla u \cdot \nabla u\;dx+\frac{K}{2}\int_\Omega u^2\;dx,$$

and

$$G(u,v)=-\frac{\alpha}{2}\int_\Omega (u^2-\beta+v)^2\;dx+\frac{K}{2} \int_\Omega u^2\;dx+\langle u,f \rangle_{L^2}$$ so that
$$J(u)=F(u)-G(u,0).$$

Define also,

$$B_1^*=\{v_1^* \in Y^*\;:\; \|v_1^*-f\|_\infty \leq K_2\},$$
where $K_2>0$ and $K>0$ are such that
$$-\frac{32 K_2^2}{K^3}+\frac{1}{\alpha}>0,$$

\begin{eqnarray}C_1&=&\left\{v_1^* \in B_1^*\;:\; \text{ there exists } u \in U_1 \right. \nonumber \\ && \left.
\text{ such that } v_1^*=\frac{\partial F(u)}{\partial u} \right\},\end{eqnarray}

\begin{eqnarray}C_2&=&\left\{v_1^* \in B_1^*\;:\; \text{ there exists } u \in U_1 \right. \nonumber \\ && \left.
\text{ such that } v_1^*=\frac{\partial G(u,0)}{\partial u} \right\},\end{eqnarray}
and
$$C^*=C_1 \cap C_2.$$

Furthermore, define $F^*:C^* \rightarrow \mathbb{R}$ by
\begin{eqnarray}
F^*(v^*_1)&=&\sup_{u \in U_1}\{ \langle u,v_1^* \rangle_{L^2}-F(u)\}
\nonumber \\ &=& \frac{1}{2}\int_\Omega \frac{(v_1^*)^2}{K-\gamma\nabla^2}\;dx
\end{eqnarray}

and $G^*:C^* \times B^* \rightarrow \mathbb{R}$ by

\begin{eqnarray}G^*(v_1^*,v_0^*)&=&\sup_{ u \in U}\inf_{v \in L^2} \{ \langle u,v_1^* \rangle_{L^2}-\langle v,v_0^* \rangle_{L^2} -G(u,v)\}
 \nonumber \\ &=&
-\frac{1}{2}\int_\Omega \frac{(v_1^*-f)^2}{2v_0^*-K}\;dx-\frac{1}{2\alpha}\int_\Omega (v_0^*)^2\;dx\nonumber \\
&&-\beta \int_\Omega v_0^*\;dx.\end{eqnarray}

Define also,
$$A^+=\{u \in U \;:\; u f \geq 0, \text{ in } \overline{\Omega}\},$$
$$B^+=\{u \in U\;:\; \delta^2J(u)\geq\mathbf{0}\},$$
$$E=A^+ \cap B^+,$$ and $$E_1=E \cap U_1.$$
%$$B^*(v_0^*)=\left\{v_1^* \in Y^*\::\; \frac{v_1^*-f}{2v_0^*-K} u_0 \geq 0, \text{ in } \Omega\right\},$$
Moreover, define
$$\hat{v}_0^*=\alpha (u_0^2-\beta),$$
$$\hat{v}_1^*=(2v_0^*-K)u_0+f$$
and assume $u_0 \in U$ is such that $\delta J(u_0)=\mathbf{0},$ and
 $$u_0 \in E_1,$$
Under such hypothesis, assuming also $\hat{v}_0^* \in B^*$, $\hat{v}_1^* \in C^*$ and denoting
$$J^*(v_1,v_0^*)=-F^*(v_1^*)+G^*(v_1^*,v_0^*),$$ we have

\begin{eqnarray}
J(u_0)&=& \inf_{u \in E_1}J(u) \nonumber \\ &=& \inf_{u \in U}J(u)
\nonumber \\ &=&  \inf_{v_1^* \in C^*}\left\{ \sup_{v_0^* \in B^*} J^*(v_1^*,v_0^*)\right\}
\nonumber \\ &=& J^*(\hat{v}_1^*,\hat{v}_0^*).
\end{eqnarray}
\end{thm}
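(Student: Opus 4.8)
The plan is to imitate the proof of Theorem~\ref{TH1}, supplementing it with the global optimality criterion of Theorem~\ref{TH2} on the primal side and with a weak‑duality estimate over $C^*$ that rests on the identity $L_1=L_2$ defining $U_1$. First I would note that $\delta J(u_0)=\mathbf 0$ is the Euler--Lagrange equation $-\gamma\nabla^2 u_0+2\alpha(u_0^2-\beta)u_0-f=0$ in $\Omega$; adding and subtracting $Ku_0$ furnishes the two representations
$$\hat v_1^*=-\gamma\nabla^2 u_0+Ku_0=-2\alpha(u_0^2-\beta)u_0+Ku_0+f=(K-2\hat v_0^*)u_0+f,$$
so that $u_0=\hat v_1^*/(K-\gamma\nabla^2)=-(\hat v_1^*-f)/(2\hat v_0^*-K)$. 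With these at hand the computation is exactly the one in Theorem~\ref{TH1}: $\partial J^*(\hat v_1^*,\hat v_0^*)/\partial v_0^*=(\hat v_1^*-f)^2/(2\hat v_0^*-K)^2-\hat v_0^*/\alpha-\beta=|u_0|^2-\hat v_0^*/\alpha-\beta=0$ by the definition of $\hat v_0^*$, while $\partial J^*(\hat v_1^*,\hat v_0^*)/\partial v_1^*=-(\hat v_1^*-f)/(2\hat v_0^*-K)-\hat v_1^*/(K-\gamma\nabla^2)=u_0-u_0=0$; hence $\delta J^*(\hat v_1^*,\hat v_0^*)=\mathbf 0$.

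Next I would locate the inner supremum and match values. The map $v_0^*\mapsto J^*(\hat v_1^*,v_0^*)$ is concave on the convex set $B^*$, since $\partial^2 J^*(\hat v_1^*,v_0^*)/\partial(v_0^*)^2=-4(\hat v_1^*-f)^2/(2v_0^*-K)^3-1/\alpha$ and on $B^*$ one has $|2v_0^*-K|^3>K^3/8$ while $\|\hat v_1^*-f\|_\infty\le K_2$ (because $\hat v_1^*\in C^*\subset B_1^*$), so this quantity is at most $32K_2^2/K^3-1/\alpha<0$ by hypothesis; as $\hat v_0^*\in B^*$ is a critical point of a concave function it is its maximizer, whence $\sup_{v_0^*\in B^*}J^*(\hat v_1^*,v_0^*)=J^*(\hat v_1^*,\hat v_0^*)$. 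For the value, $F$ is convex and $\hat v_1^*=\partial F(u_0)/\partial u$, so Legendre reciprocity gives $F^*(\hat v_1^*)=\langle u_0,\hat v_1^*\rangle_{L^2}-F(u_0)$; similarly the saddle point defining $G^*(\hat v_1^*,\hat v_0^*)$ is $(u,v)=(u_0,0)$ (one checks $v=\hat v_0^*/\alpha-u_0^2+\beta=0$), so $G^*(\hat v_1^*,\hat v_0^*)=\langle u_0,\hat v_1^*\rangle_{L^2}-G(u_0,0)$, and subtracting, $J^*(\hat v_1^*,\hat v_0^*)=F(u_0)-G(u_0,0)=J(u_0)$.

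On the primal side, $E=A^+\cap B^+$ is convex by the argument already given for Theorem~\ref{TH2}; moreover, for $K$ large the constraint $L_1(u)=L_2(u)$ holds automatically on $\{\|u\|_{1,\infty}\le\sqrt[4]{K}\}$, because the maximizer $\alpha(u^2-\beta)$ of the functional defining $L_1(u)$ lies in $B^*$ as soon as $u^2<\beta+K/(4\alpha)$, which holds once $u^2\le\sqrt K$; thus $U_1$, and hence $E_1=E\cap U_1$, is convex. Since $J$ is convex on $E\supseteq E_1$ and $\delta J(u_0)=\mathbf 0$ with $u_0\in E_1$, convexity gives $J(u_0)=\inf_{u\in E_1}J(u)$, and combining this with Theorem~\ref{TH2}, whose near‑minimizers may be taken bounded in $C^1$ (hence in $U_1$ for $K$ large), yields $\inf_{u\in E_1}J(u)=\inf_{u\in U}J(u)$. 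For weak duality, fix $v_1^*\in C^*$ and choose $u\in U_1$ with $v_1^*=\partial F(u)/\partial u$ (possible as $v_1^*\in C_1$), so that $F^*(v_1^*)=\langle u,v_1^*\rangle_{L^2}-F(u)$; using $u$ itself as a competitor in the supremum defining $G^*(v_1^*,v_0^*)$ and minimizing explicitly in $v$ (equivalently in $w=u^2-\beta+v$) yields, for every $v_0^*\in B^*$,
\begin{align*}
G^*(v_1^*,v_0^*)&\ge\langle u,v_1^*\rangle_{L^2}-\frac K2\int_\Omega u^2\,dx-\langle u,f\rangle_{L^2}\\
&\quad+\int_\Omega v_0^*u^2\,dx-\frac1{2\alpha}\int_\Omega(v_0^*)^2\,dx-\beta\int_\Omega v_0^*\,dx.
\end{align*}
Taking the supremum over $v_0^*\in B^*$ turns the last three terms into $L_2(u)$, which equals $L_1(u)=\frac\alpha2\int_\Omega(u^2-\beta)^2\,dx$ since $u\in U_1$; assembling with $F(u)-\frac K2\int_\Omega u^2\,dx=\frac\gamma2\int_\Omega\nabla u\cdot\nabla u\,dx$ we obtain $\sup_{v_0^*\in B^*}J^*(v_1^*,v_0^*)\ge J(u)\ge\inf_{w\in U}J(w)$, and infimizing over $v_1^*\in C^*$ gives $\inf_{v_1^*\in C^*}\sup_{v_0^*\in B^*}J^*(v_1^*,v_0^*)\ge\inf_{u\in U}J(u)$.

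Finally, using $\hat v_1^*\in C^*$ the inequalities combine into
\begin{align*}
\inf_{u\in U}J(u)&\le\inf_{v_1^*\in C^*}\sup_{v_0^*\in B^*}J^*(v_1^*,v_0^*)\le\sup_{v_0^*\in B^*}J^*(\hat v_1^*,v_0^*)\\
&=J^*(\hat v_1^*,\hat v_0^*)=J(u_0)=\inf_{u\in U}J(u),
\end{align*}
so equality holds throughout, which together with $J(u_0)=\inf_{u\in E_1}J(u)$ is precisely the asserted chain. The main obstacle is the primal globalization together with the bookkeeping of the constants: one must check that $K$ large, $\|u_0\|_\infty\le K_2$, $\|u\|_{1,\infty}\le\sqrt[4]{K}$ and $32K_2^2/K^3<1/\alpha$ can be imposed simultaneously with $u_0\in E_1$, and that the near‑minimizers produced by Theorem~\ref{TH2} really do land in $U_1$; the weak‑duality estimate itself is short once the identity $L_1=L_2$ on $U_1$ is available, but it is that identity --- and the convexity of $G(\cdot,v)$ it guarantees on the region $\{\|u\|_{1,\infty}\le\sqrt[4]{K}\}$ --- that forces the delicate definitions of $U_1$, $B^*$ and $C^*$.
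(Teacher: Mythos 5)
Your proposal is correct and follows essentially the same architecture as the paper's proof: weak duality $\inf_{u\in U}J\leq\inf_{v_1^*\in C^*}\sup_{v_0^*\in B^*}J^*$ resting on the identity $L_1=L_2$ for $u\in U_1$; the critical-point computations of Theorem \ref{TH1} giving $\delta J^*(\hat v_1^*,\hat v_0^*)=\mathbf 0$, $J^*(\hat v_1^*,\hat v_0^*)=J(u_0)$ and $\sup_{v_0^*\in B^*}J^*(\hat v_1^*,v_0^*)=J^*(\hat v_1^*,\hat v_0^*)$; and the primal globalization via convexity of $E$ and Theorem \ref{TH2}. Two micro-steps differ from the paper. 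For weak duality the paper bounds $-G(u,0)$ by its conjugate over $U_1$, swaps the two suprema using $L_1=L_2$, and only then takes the infimum over $u\in U_1$ to produce $-F^*(v_1^*)$; you instead use $v_1^*\in C_1$ to pick $u\in U_1$ with $v_1^*=\partial F(u)/\partial u$, get $F^*(v_1^*)=\langle u,v_1^*\rangle_{L^2}-F(u)$ by Legendre reciprocity, and lower-bound $G^*$ by inserting this $u$ into the sup--inf; both routes hinge on $L_2(u)=L_1(u)=\frac\alpha2\int_\Omega(u^2-\beta)^2dx$, so the estimates are equivalent. You also make explicit, via the bound $32K_2^2/K^3<1/\alpha$ on $\partial^2 J^*/\partial(v_0^*)^2$ over $B^*$, the concavity that justifies $\sup_{v_0^*\in B^*}J^*(\hat v_1^*,v_0^*)=J^*(\hat v_1^*,\hat v_0^*)$, which the paper leaves implicit by referring back to Theorem \ref{TH1}; that is a useful fleshing-out and uses the stated constant correctly. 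One caution: your justification of $\inf_{u\in E_1}J=\inf_{u\in U}J$ through the claim that the near-minimizers of Theorem \ref{TH2} can be taken in $U_1$ is both unnecessary and not really supported (nothing forces $\|v_\varepsilon\|_{1,\infty}\leq\sqrt[4]{K}$ for the fixed $K$ of the statement); the paper's route --- $J(u_0)=\inf_{u\in E}J$ from convexity of $J$ on the convex set $E$ together with $\delta J(u_0)=\mathbf 0$, then $\inf_{u\in E}J=\inf_{u\in U}J$ by Theorem \ref{TH2}, and finally the sandwich $\inf_{u\in U}J\leq\inf_{u\in E_1}J\leq J(u_0)$ since $u_0\in E_1\subset U$ --- yields the same conclusion from facts you already establish, so this is a presentational blemish rather than a gap; likewise the convexity of $U_1$ (hence of $E_1$) that you argue for is not actually needed.
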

\begin{proof} Define $$\eta=\inf_{u \in U} J(u).$$

Hence
\begin{eqnarray}
\eta &\leq& J(u) \nonumber \\ &\leq&
-\langle u,v_1^*\rangle_{L^2}+F(u) \nonumber \\ && +\sup_{u \in U_1}\{\langle u,v_1^*\rangle_{L^2}-G(u,0)\}
\nonumber \\ &=& -\langle u,v_1^*\rangle_{L^2}+F(u) \nonumber \\ && +\sup_{u \in U_1}\left\{\sup_{v_0^* \in B^*}\left\{\langle u,v_1^*\rangle_{L^2}+\int_\Omega v_0^*u^2\;dx-\frac{K}{2}\int_\Omega u^2\;dx-\frac{1}{2\alpha}\int_\Omega (v_0^*)^2\;dx-\beta \int_\Omega v_0^*\;dx\right\}\right\},\nonumber\end{eqnarray}
$\forall u \in U_1, v_1^* \in C^*.$

Thus,
\begin{eqnarray}
\eta &\leq& J(u) \nonumber \\ &\leq&
-\langle u,v_1^*\rangle_{L^2}+F(u) \nonumber \\ && +\sup_{u \in U_1}\{\langle u,v_1^*\rangle_{L^2}-G(u,0)\}
\nonumber \\ &=& -\langle u,v_1^*\rangle_{L^2}+F(u) \nonumber \\ && +\sup_{v_0^* \in B^*}\left\{\sup_{u \in U_1}\left\{\langle u,v_1^*\rangle_{L^2}+\int_\Omega v_0^*u^2\;dx-\frac{K}{2}\int_\Omega u^2\;dx-\frac{1}{2\alpha}\int_\Omega (v_0^*)^2\;dx -\beta \int_\Omega v_0^*\;dx\right\}\right\} \nonumber \\ &=& -\langle u,v_1^*\rangle_{L^2}+F(u) \nonumber \\ &&
+\sup_{v_0^* \in B^*} G^*(v_1^*,v_0^*),\nonumber\end{eqnarray}
$\forall u \in U_1, v_1^* \in C^*.$

From this, we obtain
\begin{eqnarray}
\eta &\leq& \inf_{u \in U_1}\{ -\langle u,v_1^*\rangle_{L^2}+F(u)\} \nonumber \\ &&
+\sup_{v_0^* \in B^*} G^*(v_1^*,v_0^*) \nonumber \\ &=&-F^*(v_1^*)+\sup_{v_0^* \in B^*} G^*(v_1^*,v_0^*) \nonumber
\\ &=& \sup_{v_0^* \in B^*}J^*(v_1^*,v_0^*),\end{eqnarray}
$\forall v_1^* \in C^*.$

Summarizing, we have got
\begin{equation}\label{uk1001}\inf_{u \in U}J(u) \leq \inf_{v_1^* \in C^*}\left\{ \sup_{v_0^* \in B^*} J^*(v_1^*,v_0^*)\right\}. \end{equation}

Similarly as in the proof of the Theorem \ref{TH1}, we may obtain
$$\delta J^*(\hat{v}_1^*,\hat{v}_0^*)=\mathbf{0},$$
$$J^*(\hat{v}_1^*,\hat{v}_0^*)=J(u_0),$$
and
$$J^*(\hat{v}_1^*,\hat{v}_0^*)=\sup_{v_0^* \in B^*} J^*(\hat{v}_1^*,v_0^*).$$

From the proof of Theorem \ref{TH2} we may infer that $E$ is convex.

From this, since $u_0 \in E_1 \subset E$ and $\delta J(u_0)=\mathbf{0}$ and $E$ is convex we have that, also from the Theorem \ref{TH2},
$$J(u_0)=\inf_{u \in E}J(u)=\inf_{u \in U}J(u).$$

Consequently, from such a result, from $\hat{v}_1^* \in C^*$ and $(\ref{uk1001})$ we have that
\begin{eqnarray}
J(u_0)&=& \inf_{u \in E_1}J(u) \nonumber \\ &=& \inf_{u \in U}J(u)
\nonumber \\ &=&  \inf_{v_1^* \in C^*}\left\{ \sup_{v_0^* \in B^*} J^*(v_1^*,v_0^*)\right\}
\nonumber \\ &=& J^*(\hat{v}_1^*,\hat{v}_0^*).
\end{eqnarray}
The proof is complete.
\end{proof}
\section{Numerical results}
In this section we present some numerical results for the following sets
$$\Omega=[-1/2,1/2]\times [-1/2,1/2]\times [-1/2,1/2],$$
and
$$\Omega_1=[-3/2,3/2]\times [-3/2,3/2]\times [-3/2,3/2].$$

The system of equation in question, namely, the complex Ginzburg-Landau one, is given by

\begin{eqnarray}
&&-\gamma \nabla^2\phi-2[i\rho\gamma ((\mathbf{A}\cdot \nabla \phi)+\text{ div } \mathbf{A} \phi)]
\nonumber \\ && +\gamma\rho^2|\mathbf{A}|^2\phi+\alpha |\phi|^2 \phi-\beta \phi=0, \text{ in } \Omega,
\end{eqnarray}

$$(\nabla \phi-i\rho \mathbf{A}\phi)\cdot \mathbf{n}=0, \text{ on } \partial \Omega,$$

$$K_0\text{ curl curl }\mathbf{A}=K_0\text{ curl }\mathbf{B}_0+\tilde{J}, \text{ in }\Omega,$$

$$\text{ curl curl }\mathbf{A}=\text{ curl } \mathbf{B}_0, \text{ in } \Omega_1\setminus \overline{\Omega}.$$

Here
$$\tilde{J}=-2Re[ i \rho\gamma \phi^*\nabla \phi]-\rho^2\gamma |\phi|^2\mathbf{A}.$$

At this point we start to describe the process concerning the numerical method of lines.

Fixing a starting point $\{\hat{\phi}_0\}$ and  $\{(\mathbf{A}_0)_n\}$ and considering the generalized method of lines, we discretize the system in partial finite differences in $z$, that is, we obtain $N-1$ lines which correspond to the $N-1$ partial  differential equations in $(x,y).$

\begin{eqnarray}
&&-\gamma \frac{(\phi_{n+1}-2\phi_n+\phi_{n-1})}{d^2}-\gamma \nabla^2\phi_n+K\phi_n-K(\hat{\phi}_0)_n
\nonumber \\ &&-2[i\rho\gamma ((\mathbf{A}_0)_n \nabla (\hat{\phi}_0)_n+div(\mathbf{A}_0)(\hat{\phi}_0)_n)]
\nonumber \\ && +\rho^2\gamma |(\mathbf{A}_0)_n|^2 \phi_n+\alpha |(\hat{\phi}_0)_n|^2\phi_n-\beta \phi_n=0
\end{eqnarray}
$\forall n \in \{1,\ldots,N-1\},$ where $d=1/N.$

With such a equation in mind, we denote

\begin{equation}\label{uk400.a}\phi_{n+1}-2\phi_n+\phi_{n-1}-K\phi_n \frac{d}{\gamma}+T_n(\phi_n)\frac{d^2}{\gamma}=0,
\end{equation}
where
\begin{eqnarray}T_n(\phi_n)&=&-\gamma \nabla^2\phi_n+2[i\rho\gamma((\mathbf{A}_0)_n \nabla (\hat{\phi}_0)_n
+\text{ div } (\mathbf{A}_0)_n (\phi_0)_n)]
\nonumber \\ &&-K(\hat{\phi}_0)_n-\rho^2\gamma |(\mathbf{A}_0)_n|^2\phi_n-\alpha |(\hat{\phi}_0)_n|^2\phi_n+\beta \phi_n.
\end{eqnarray}

For $n=1$, from the boundary condition
$$ (\nabla \phi-i\rho\mathbf{A}\phi)\cdot \mathbf{n}=0$$ at
$x=-1/2$ we get
$$\phi_0=H_1 \phi_1,$$ fora n appropriate matrix $H_1.$

Replacing such a relation in (\ref{uk400.a}), we obtain
$$\phi_2-2\phi_1+H_1\phi_1-K\phi_1 \frac{d^2}{\gamma}+T_1(\phi_1)\frac{d^2}{\gamma}=0,$$
so that
\begin{equation}\label{us400.a}\phi_1=a_1\phi_2+b_1T_1(\phi_1)\frac{d^2}{\gamma}+E_1,\end{equation}
where
$$a_1=\left(2+K\frac{d^2}{\gamma}-H_1\right)^{-1},$$
$$b_1=a_1,$$
$$E_1=0.$$
For $n=2$ replacing (\ref{us400.a}) into (\ref{uk400.a}), we obtain
\begin{eqnarray}
&&\phi_3-2\phi_2+a_1\phi_2+b_1T_1(\phi_1)\frac{d^2}{\gamma}
\nonumber \\ &&-K\phi_2 \frac{d^2}{\gamma}+T_2(\phi_2)\frac{d^2}{\gamma}=0.\end{eqnarray}

From this, we may write
$$\phi_2=a_2\phi_3+b_2T(\phi_2)\frac{d^2}{\gamma}+E_2,$$
where
$$a_2=\left(2-a_1+K\frac{d^2}{\gamma}\right)^{-1},$$
$$b_2=a_2(b_1+1)$$
and
$$E_2=a_2b_1(T_1(\phi_1)-T_2(\phi_2))\frac{d^2}{\gamma}.$$

At this point we remark that the matrix concerning the operator $\nabla^2$ must take into account the boundary conditions in $(y,z)$ at each $n \in \{1,\ldots,N-1\}.$

reasoning inductively, having
$$\phi_{n-1}=a_{n-1}\phi_n+b_{n-1}T_{n-1}(\phi_{n-1})\frac{d^2}{\gamma}+E_{n-1}$$ and replacing such an relation into
(\ref{uk400.a}) we obtain
\begin{eqnarray}
&&\phi_{n+1}-2\phi_n+a_{n-1}\phi_n+b_{n-1}T_{n-1}(\phi_{n-1})\frac{d^2}{\gamma}+E_{n-1}
\nonumber \\ &&-K\phi_n \frac{d^2}{\gamma}+T_n(\phi_n)\frac{d^2}{\gamma}=0.\end{eqnarray}
Hence,
$$\phi_{n}=a_{n}\phi_n+b_{n}T_n(\phi_{n})\frac{d^2}{\gamma}+E_{n}$$
where
$$a_n=\left(2-a_{n-1}+K \frac{d^2}{\gamma}\right)^{-1},$$
$$b_n=a_n(b_{n-1}+1),$$
$$E_n=a_nb_{n-1}(T_{n-1}(\phi_{n-1})-T_n(\phi_n))\frac{d^2}{\gamma}+a_nE_{n-1}.$$

Thus, for $n=N-1$ for the boundary condition
$$(\nabla \phi-i\rho\mathbf{A}\phi)\cdot \mathbf{n}=0$$ at $x=1/2$, we get
$$\phi_N=H_2\phi_{N-1}$$ for an appropriate matrix $H_2$.

Hence, from the previous results, with $n=N-1$, we get
\begin{eqnarray}
\phi_{N-1}&=& a_{N-1}\phi_N+b_{N-1}T_{N-1}(\phi_{N-1})\frac{d^2}{\gamma}+E_{N-1} \nonumber \\
&\approx& a_{N-1}H_2\phi_{N-1}+b_{N-1}T_{N-1}(\phi_{N-1})\frac{d^2}{\gamma}.
\end{eqnarray}

Solving this last linear partial differential equation we obtain $\phi_{N-1}$.

Having $\phi_{N-1}$ we obtain $\phi_{N-2}$ through the equation
$$\phi_{N-2}\approx a_{N-2}\phi_{N-1}+b_{N-2}T_{N-2}(\phi_{N-2})\frac{d^2}{\gamma}.$$

Having $\phi_{N-2}$ similarly we obtain $\phi_{N-3}$ and so on up to finding $\phi_1.$

Having $\{\phi_n\}$ the next step is to calculate $\mathbf{A}=\{\mathbf{A}_n\}$ through the linear equations
$$K_0 \text{ curl curl }\mathbf{A}=K_0\text{ curl }\mathbf{B}_0+\tilde{J}(\{\phi_n\},\mathbf{A}), \text{ in }\Omega,$$

$$\text{ curl curl }\mathbf{A}=\text{ curl } \mathbf{B}_0, \text{ in } \Omega_1\setminus \overline{\Omega}.$$

The idea here is to fix the Gauge of London through the equation
$$\text{ div }\mathbf{A}=0 \text{ in } \Omega_1,$$
with the boundary conditions
$$\mathbf{A}\cdot \mathbf{n}=0, \text{ on } \partial \Omega_1.$$

Finally, we replace $\hat{\phi}_0$ and $\mathbf{A}_0$ by $\{\phi_n\}$ and $\{\mathbf{A}_n\}$ and repeat the process
until an appropriate convergence criterion is satisfied.

\subsection{A numerical example}

We present numerical results for $\gamma=\alpha=\beta=K(0)=1$. In this example
$$\mathbf{B}_0(x,y,y)=B_0( f(x,y)\mathbf{i}+f(x,y)\mathbf{j})$$
where $$f(x,y)=(-3/2+x)^2(-3/2+y)^2(-3/2+z)^2(x+3/2)(y+3/2)(z+3/2)/3^6$$
and $$B_0=0.008.$$

For the solution $|\phi(x,y,0)|^2$ at the section
$z=0$, please see \ref{LGNOVEMBER-2020-5}.

For the solutions for $A_1(x,y,0)$ and $A_2(x,y,0)$ %and $A_3(x,y,0)$,
please see figures \ref{LGNOVEMBER-2020-6} and \ref{LGNOVEMBER-2020-7}. %\ref{LGNOVEMBER-2020-4}, respectively.
\begin{figure}
\centering \includegraphics [width=3in]{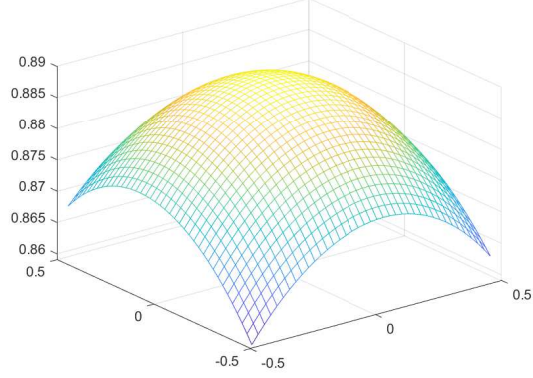}
\\ \caption{\small{ Solution  $|\phi|^2(x,y,0)$ for the section z=0 for $B_0=0.008$} }\label{LGNOVEMBER-2020-5}
\end{figure}
\begin{figure}
\centering \includegraphics [width=3in]{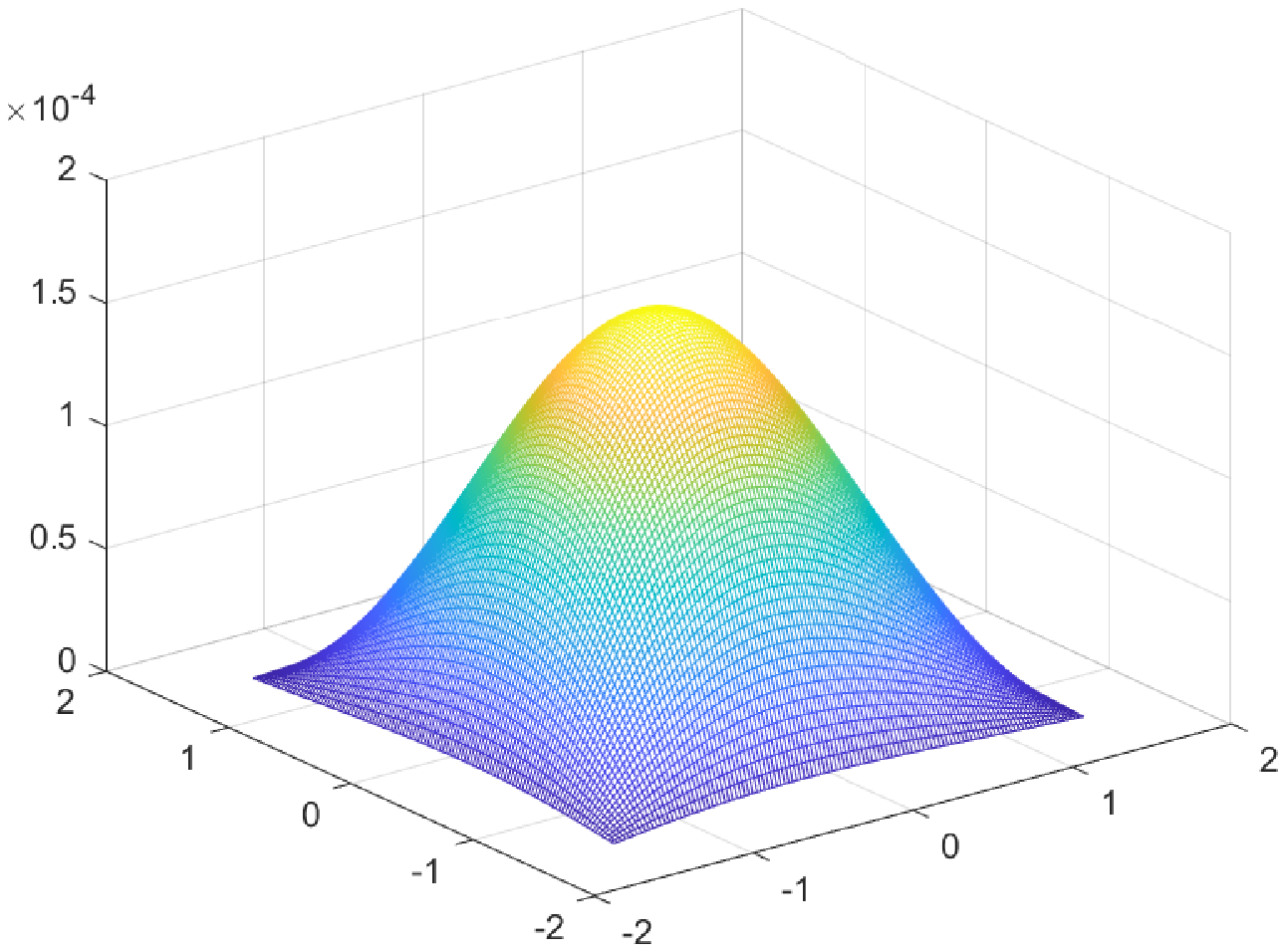}
\\ \caption{\small{ Solution  $A_1(x,y,0)$ for the section z=0 for $B_0=0.008$} }\label{LGNOVEMBER-2020-6}
\end{figure}
\begin{figure}
\centering \includegraphics [width=3in]{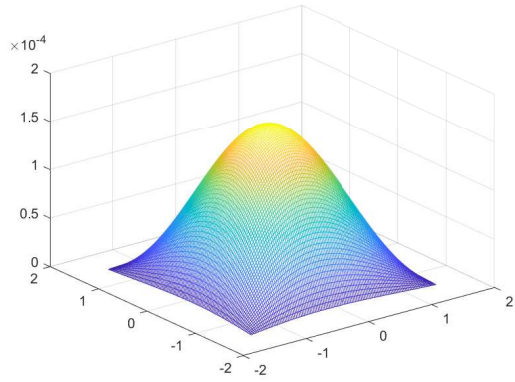}
\\ \caption{\small{ Solution  $A_2(x,y,0)$ for the section z=0 for $B_0=0.008$} }\label{LGNOVEMBER-2020-7}
\end{figure}

%\begin{figure}
%\centering \includegraphics [width=3in]{LGNOVEMBER-20-8.eps}
%\\ \caption{\small{Solution  $A_3(x,y,0)$ for the section z=0} }\label{LGNOVEMBER-2020-8}
%\end{figure}

For $B_0=0.031$, for the solution $|\phi(x,y,0)|^2$ at the section
$z=0$, please see \ref{LGNOVEMBER-2020-1}.

For such a $B_0$ value, for the solutions for $A_1(x,y,0)$ and $A_2(x,y,0)$ %and $A_3(x,y,0)$,
please see figures \ref{LGNOVEMBER-2020-2} and \ref{LGNOVEMBER-2020-3}. %\ref{LGNOVEMBER-2020-4}, respectively.

\begin{figure}
\centering \includegraphics [width=3in]{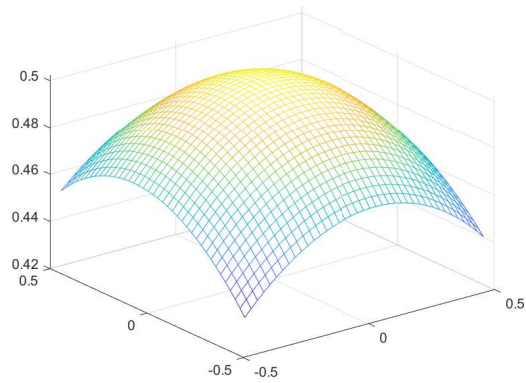}
\\ \caption{\small{ Solution  $|\phi|^2(x,y,0)$ for the section z=0 for $B_0=0.031$} }\label{LGNOVEMBER-2020-1}
\end{figure}
\begin{figure}
\centering \includegraphics [width=3in]{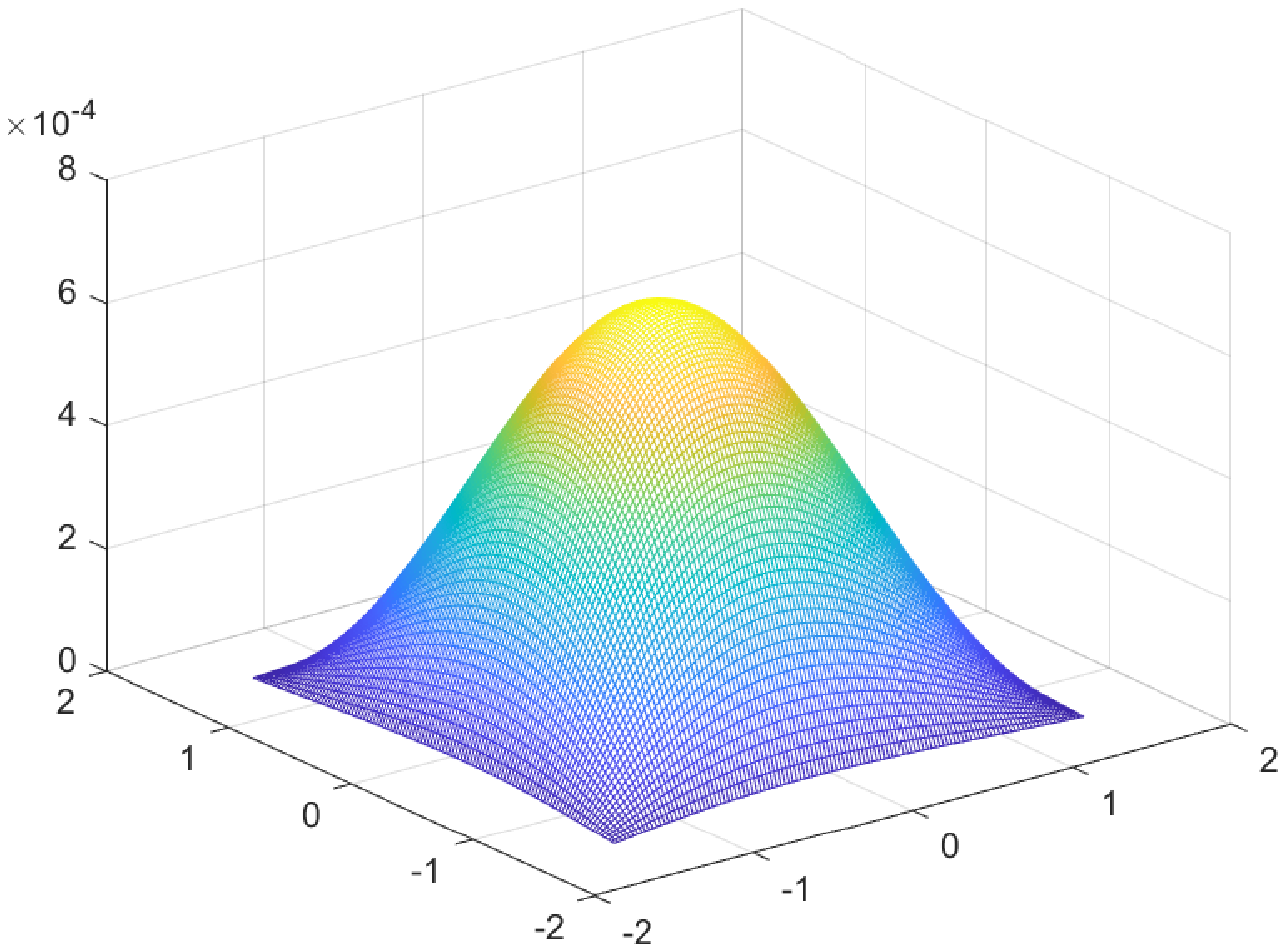}
\\ \caption{\small{ Solution  $A_1(x,y,0)$ for the section z=0 for $B_0=0.031$} }\label{LGNOVEMBER-2020-2}
\end{figure}
\begin{figure}
\centering \includegraphics [width=3in]{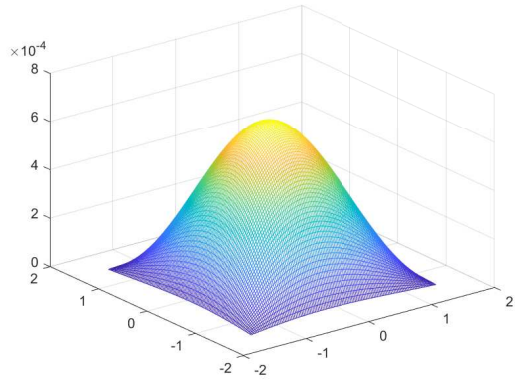}
\\ \caption{\small{ Solution  $A_2(x,y,0)$ for the section z=0 for $B_0=0.031$} }\label{LGNOVEMBER-2020-3}
\end{figure}
%\begin{figure}
%\centering \includegraphics [width=3in]{LGNOVEMBER-20-4.eps}
%\\ \caption{\small{Solution  $A_3(x,y,0)$ for the section z=0} }\label{LGNOVEMBER-2020-4}
%\end{figure}
\begin{remark}
We observe that for both values of $B_0$ the effect of magnetic field on the $|\phi|^2$ distribution is more present close to the
boundaries of $\Omega$. Also as expected, the higher value of $B_0$ corresponds to more decreasing in the $|\phi|^2$ distribution on its domain. We recall that $|\phi|^2$ is point-wise the proportion of electrons along the sample  in the super-conducting state. It is  always expected for $|\phi|^2$ point-wise,  a value between $0$ and $1$, with $\phi=0$ corresponding to the normal state and $\phi=1$ corresponding to the super-conducting state.
\end{remark}

\section{Initial model formulation} In the present section, in a first step,  we develop  a new existence proof and  a dual variational formulation for the Kirchhoff-Love thin plate model. Previous results on existence in mathematical elasticity and related models may be found in \cite{903,[3],[4]}.

At this point we refer to the exceptionally important article "A contribution to contact problems for a class of solids and structures" by
W.R. Bielski and J.J. Telega, \cite{85},  published in 1985,  as the first one to successfully  apply and generalize the convex analysis approach to a model in non-convex and non-linear mechanics.

The present work is, in some sense, a kind of extension  of this previous work \cite{85} and others such as \cite{2900}, which greatly influenced and
inspired my work and recent book \cite{120}.

Here we highlight that such earlier results establish the complementary energy under the hypothesis of positive definiteness of the membrane force tensor at a critical point (please see  \cite{85,2900} for details).

We have obtained a dual variational formulation which allows the global optimal point in question not to be positive definite (for  related results see F.Botelho \cite{120}), but also not necessarily negative definite. The approach developed also includes sufficient conditions of optimality for the primal problem.
It is worth mentioning that the standard tools of convex analysis used in this text may be found in \cite{[6],120}, for example.

At this point we start to describe the primal formulation.

    Let $\Omega\subset\mathbb{R}^2$ be an open, bounded, connected set  which  represents the middle surface of a plate
of thickness $h$. The boundary of $\Omega$, which is assumed to be regular (Lipschitzian), is
denoted by $\partial \Omega$. The vectorial basis related to the cartesian
system $\{x_1,x_2,x_3\}$ is denoted by $( \textbf{a}_\alpha,
\textbf{a}_3)$, where $\alpha =1,2$ (in general Greek indices stand
for 1 or 2), and where $\textbf{a}_3$ is the vector normal to $\Omega$, whereas $\textbf{a}_1$ and $\textbf{a}_2$ are orthogonal vectors parallel to $\Omega.$  Also,
$\textbf{n}$ is
the outward normal to the plate surface.

    The displacements will be denoted by
$$ \hat{\textbf{u}}=\{\hat{u}_\alpha,\hat{u}_3\}=\hat{u}_\alpha
\textbf{a}_\alpha+ \hat{u}_3 \textbf{a}_3.
$$

The Kirchhoff-Love relations are
\begin{eqnarray}&&\hat{u}_\alpha(x_1,x_2,x_3)=u_\alpha(x_1,x_2)-x_3
w(x_1,x_2)_{,\alpha}\; \nonumber \\ && \text{ and }
\hat{u}_3(x_1,x_2,x_3)=w(x_1,x_2).
\end{eqnarray}

Here $ -h/2\leq x_3 \leq h/2$ so that we have
$u=(u_\alpha,w)\in U$ where
\begin{eqnarray}
U&=&\left\{(u_\alpha,w)\in W^{1,2}(\Omega; \mathbb{R}^2) \times
W^{2,2}(\Omega) , \right.\nonumber \\ &&\; u_\alpha=w=\frac{\partial w}{\partial \textbf{n}}=0 \left.
\text{ on } \partial \Omega \right\} \nonumber \\ &=&W_0^{1,2}(\Omega; \mathbb{R}^2) \times W_0^{2,2}(\Omega).\nonumber\end{eqnarray}
It is worth emphasizing that the boundary conditions here specified refer to a clamped plate.

We define the operator $\Lambda: U \rightarrow Y \times Y$, where $Y=Y^*=L^2(\Omega; \mathbb{R}^{2 \times 2})$, by
$$\Lambda(u)=\{\gamma(u), \kappa(u)\},$$
$$\gamma_{\alpha\beta}(u)= \frac{u_{\alpha,\beta}+u_{\beta,\alpha}}{2}+\frac{w_{,\alpha} w_{,\beta}}{2},$$
$$\kappa_{\alpha \beta}(u)=-w_{,\alpha \beta}.$$
The constitutive relations are given by
\begin{equation}
N_{\alpha\beta}(u)=H_{\alpha\beta\lambda\mu} \gamma_{\lambda\mu}(u),
\end{equation}
\begin{equation}
M_{\alpha\beta}(u)=h_{\alpha\beta\lambda\mu} \kappa_{\lambda\mu}(u),
\end{equation}
where: $\{H_{\alpha\beta\lambda\mu}\}$
 and
 $\{h_{\alpha\beta\lambda\mu}=\frac{h^2}{12}H_{\alpha\beta\lambda\mu}\}$,
 are symmetric positive definite   fourth order tensors. From now on, we denote $\{\overline{H}_{\alpha\beta \lambda \mu}\}=\{H_{\alpha\beta \lambda \mu}\}^{-1}$ and $\{\overline{h}_{\alpha\beta \lambda \mu}\}=\{h_{\alpha\beta \lambda \mu}\}^{-1}$.

 Furthermore
 $\{N_{\alpha\beta}\}$ denote the membrane force tensor and
 $\{M_{\alpha\beta}\}$ the moment one.
    The plate stored energy, represented by $(G\circ
    \Lambda):U\rightarrow\mathbb{R}$ is expressed by
 \begin{equation}\label{80} (G\circ\Lambda)(u)=\frac{1}{2}\int_{\Omega}
  N_{\alpha\beta}(u)\gamma_{\alpha\beta}(u)\;dx+\frac{1}{2}\int_{\Omega}
  M_{\alpha\beta}(u)\kappa_{\alpha\beta}(u)\;dx
  \end{equation}
 and the external work, represented by $F:U\rightarrow\mathbb{R}$, is given by
   \begin{equation}\label{81} F(u)=\langle w,P \rangle_{L^2(\Omega)}+\langle u_\alpha,P_\alpha \rangle_{L^2(\Omega)}
,\end{equation} where $P, P_1, P_2 \in L^2(\Omega)$ are external loads in the directions $\textbf{a}_3$, $\textbf{a}_1$ and $\textbf{a}_2$ respectively. The potential energy, denoted by
$J:U\rightarrow\mathbb{R}$ is expressed by:
$$
J(u)=(G\circ\Lambda)(u)-F(u)
$$

Finally, we also emphasize from now on, as their meaning are clear, we may denote $L^2(\Omega)$ and $L^2(\Omega; \mathbb{R}^{2 \times 2})$ simply by $L^2$, and the respective norms by $\|\cdot \|_2.$ Moreover derivatives are always understood in the distributional sense, $\mathbf{0}$ may denote the zero vector in  appropriate Banach spaces and, the following and relating notations are used:
$$w_{,\alpha\beta}=\frac{\partial^2 w}{\partial x_\alpha \partial x_\beta},$$
$$u_{\alpha,\beta}=\frac{\partial u_\alpha}{\partial x_\beta},$$
$$N_{\alpha\beta,1}=\frac{\partial N_{\alpha\beta}}{\partial x_1},$$
and
$$N_{\alpha\beta,2}=\frac{\partial N_{\alpha\beta}}{\partial x_2}.$$
\section{On the existence of a global minimizer}

At this point we present an existence result concerning the Kirchhoff-Love plate model.

We start with the following two remarks.
\begin{remark}\label{us123W}Let $\{P_\alpha\} \in L^\infty(\Omega;\mathbb{R}^2)$. We may easily obtain by appropriate Lebesgue integration $\{\tilde{T}_{\alpha \beta}\}$ symmetric and such that

 $$\tilde{T}_{\alpha\beta,\beta}=-P_\alpha, \text{ in } \Omega.$$

 Indeed, extending $\{P_\alpha\}$ to zero outside $\Omega$ if necessary, we may set
 $$\tilde{T}_{11}(x,y)=-\int_0^xP_1(\xi,y)\;d\xi,$$
  $$\tilde{T}_{22}(x,y)=-\int_0^yP_2(x,\xi)\;d\xi,$$
  and $$\tilde{T}_{12}(x,y)=\tilde{T}_{21}(x,y)=0, \text{ in } \Omega.$$

 Thus, we may choose a $C>0$ sufficiently big, such that $$\{T_{\alpha\beta}\}=\{\tilde{T}_{\alpha\beta} +C \delta_{\alpha\beta}\}$$ is positive definite $\text{ in } \Omega$, so that

 $$T_{\alpha\beta,\beta}=\tilde{T}_{\alpha\beta,\beta}=-P_\alpha,$$

 where $$\{\delta_{\alpha\beta}\}$$ is the Kronecker delta.

 So, for the kind of boundary conditions of the next theorem, we do NOT have any restriction for the $\{P_\alpha\}$ norm.

 Summarizing, the next result is new and it is really a step forward concerning the previous one in Ciarlet \cite{[3]}. We emphasize this result and
 its proof through such a tensor $\{T_{\alpha\beta}\}$ are new, even though the final part of the proof is established through a standard procedure in
 the calculus of variations.

 About the other existence result for plates, its proof through the tensor well specified $\{(T_0)_{\alpha\beta}\}$ is also new, even though the final  part
 of such a proof  is also performed through a standard procedure.

 A similar remark is valid for the existence result for the model of shells, which is also established through a tensor $T_0$ properly specified.

 Finally, the duality principles and concerning optimality conditions are established through new functionals. Similar results may be found
 in \cite{120}.
\end{remark}

\begin{remark} Specifically about the existence of the tensor $T_0$ relating Theorem \ref{us123a}, we recall the following well known duality principle
of the calculus of variations
\begin{eqnarray} &&\inf_{T=\{T_{\alpha\beta}\} \in B^*} \left\{\frac{1}{2}\|T\|_2^2 \right\}\nonumber \\ &=&
\sup_{ \{u_\alpha\} \in \tilde{U}} \left\{ -\frac{1}{2}\int_\Omega \nabla u_\alpha \cdot \nabla u_\alpha \;dx +\langle u_\alpha, P_\alpha\rangle_{L^2(\Omega)}
+\langle u_\alpha, P^t_\alpha\rangle_{L^2(\Gamma_t)}\right\}.
\end{eqnarray}
Here $$B^*=\{T \in L^2(\Omega;\mathbb{R}^4)\;:\; T_{\alpha\beta, \beta}+P_\alpha=0, \; \text{ in } \Omega,\; T_{\alpha\beta}n_\beta-P^t_\alpha=0, \text{ on } \Gamma_t\},$$
and
$$\tilde{U}=\{\{u_\alpha\} \in W^{1,2}(\Omega;\mathbb{R}^2)\;:\; u_\alpha=0 \text{ on } \Gamma_0\}.$$

We also recall that the existence of a unique solution for both these primal and dual convex formulations is a well known result of the duality theory in the calculus of variations. Please, see related results in \cite{[6]}.

A similar duality principle may be established for the case of Theorem \ref{us123b}.
\end{remark}
\begin{thm}\label{usp10}Let $\Omega \subset \mathbb{R}^2$ be an open, bounded, connected set with a Lipschitzian boundary denoted by $\partial \Omega=\Gamma.$ Suppose $(G \circ \Lambda):U \rightarrow \mathbb{R}$ is defined by
$$G(\Lambda u)=G_1(\gamma (u))+G_2(\kappa (u)), \; \forall u \in U,$$
where
$$G_1(\gamma u)= \frac{1}{2}\int_\Omega H_{\alpha\beta\lambda\mu} \gamma_{\alpha\beta}(u)\gamma_{\lambda \mu}(u)\;dx,$$
and
$$G_2(\kappa u)=\frac{1}{2}\int_\Omega h_{\alpha\beta\lambda\mu}\kappa_{\alpha\beta}(u)\kappa_{\lambda \mu}(u)\;dx,$$
where
$$\Lambda(u)=(\gamma(u),\kappa(u))=(\{\gamma_{\alpha\beta}(u)\},\{\kappa_{\alpha\beta}(u)\}),$$
$$\gamma_{\alpha\beta}(u)=\frac{u_{\alpha,\beta}+u_{\beta,\alpha}}{2}+\frac{w_{,\alpha} w_{,\beta}}{2},$$
$$\kappa_{\alpha\beta}(u)=-w_{,\alpha\beta},$$
and where
\begin{eqnarray}J(u)&=& W(\gamma(u),\kappa(u))-\langle P_\alpha, u_\alpha\rangle_{L^2(\Omega)}
\nonumber \\ &&-\langle w,P\rangle_{L^2(\Omega)}-\langle P^t_\alpha, u_\alpha \rangle_{L^2(\Gamma_t)} \nonumber \\&&
-\langle P^t,w \rangle_{L^2(\Gamma_t)},\end{eqnarray}
where,
 \begin{eqnarray} U&=&\{u=(u_\alpha,w)=(u_1,u_2,w) \in W^{1,2}(\Omega; \mathbb{R}^2) \times W^{2,2}(\Omega)\;:
\nonumber \\ && u_\alpha=w= \frac{\partial w}{\partial \mathbf{n}}=0, \text{ on } \Gamma_0\},
\end{eqnarray}
where $\partial \Omega =\Gamma_0 \cup \Gamma_t$ and the Lebesgue measures $$m_{\Gamma}(\Gamma_0 \cap \Gamma_t)=0,$$
and $$m_\Gamma (\Gamma_0)>0.$$

We also define,
\begin{eqnarray} F_1(u)&=& -\langle w,P\rangle_{L^2(\Omega)}-\langle u_\alpha, P_\alpha \rangle_{L^2(\Omega)}-\langle P^t_\alpha, u_\alpha \rangle_{L^2(\Gamma_t)} \nonumber \\&&
-\langle P^t,w \rangle_{L^2(\Gamma_t)}+\langle \varepsilon_\alpha, u_\alpha^2 \rangle_{L^2(\Gamma_t)} \nonumber \\
&\equiv& -\langle u,\mathbf{f}\rangle_{L^2}+\langle \varepsilon_\alpha, u_\alpha^2 \rangle_{L^2(\Gamma_t)}
\nonumber \\ &\equiv&  -\langle u,\mathbf{f}_1\rangle_{L^2}-\langle u_\alpha, P_\alpha \rangle_{L^2(\Omega)}+\langle \varepsilon_\alpha, u_\alpha^2 \rangle_{L^2(\Omega)} ,
\end{eqnarray}
where  $$\langle u,\mathbf{f}_1\rangle_{L^2}=\langle u,\mathbf{f}\rangle_{L^2}-\langle u_\alpha, P_\alpha \rangle_{L^2(\Omega)},$$ $\varepsilon_\alpha>0, \; \forall \alpha \in \{1,2\}$ and
$$\mathbf{f}=(P_\alpha,P) \in L^\infty(\Omega;\mathbb{R}^3).$$

Let $J:U \rightarrow \mathbb{R}$ be defined by
$$J(u)=G(\Lambda u)+F_1(u),\; \forall u \in U.$$
Assume there exists $\{c_{\alpha\beta}\} \in \mathbb{R}^{2 \times 2}$ such that $c_{\alpha\beta}>0,\; \forall \alpha,\beta \in \{1,2\}$ and $$G_2(\kappa(u)) \geq c_{\alpha\beta}\|w_{,\alpha\beta}\|_2^2,\; \forall u \in U.$$

Under such hypotheses, there exists $u_0 \in U$ such that
$$J(u_0)=\min_{u \in U} J(u).$$
\end{thm}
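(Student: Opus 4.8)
The plan is to apply the direct method of the calculus of variations. The first task is to rewrite the indefinite membrane work $\langle u_\alpha,P_\alpha\rangle_{L^2(\Omega)}$ following Remark~\ref{us123W}: since $\mathbf f\in L^\infty$, there is a symmetric, positive definite and bounded tensor $\{T_{\alpha\beta}\}$ with $T_{\alpha\beta,\beta}=-P_\alpha$ in $\Omega$. Integrating by parts, using $u_\alpha=0$ on $\Gamma_0$ and the symmetry of $T$ together with $\frac12(u_{\alpha,\beta}+u_{\beta,\alpha})=\gamma_{\alpha\beta}(u)-\frac12 w_{,\alpha}w_{,\beta}$, one obtains
\begin{eqnarray}
-\langle u_\alpha,P_\alpha\rangle_{L^2(\Omega)} &=& -\int_\Omega \gamma_{\alpha\beta}(u)\,T_{\alpha\beta}\;dx+\frac12\int_\Omega w_{,\alpha}w_{,\beta}\,T_{\alpha\beta}\;dx \nonumber \\ && +\int_{\Gamma_t}u_\alpha T_{\alpha\beta}n_\beta\;dS. \nonumber
\end{eqnarray}
The decisive observation is the sign of each piece: $\frac12\int_\Omega w_{,\alpha}w_{,\beta}T_{\alpha\beta}\;dx\geq0$ because $T$ is positive definite; $-\int_\Omega \gamma_{\alpha\beta}(u)T_{\alpha\beta}\;dx$ is dominated, up to an additive constant depending on $\|T\|_2$, by $G_1(\gamma(u))\geq\frac{c_1}{2}\|\gamma(u)\|_2^2$ (positive definiteness of $\{H_{\alpha\beta\lambda\mu}\}$ plus Young's inequality); and the boundary term $\int_{\Gamma_t}u_\alpha T_{\alpha\beta}n_\beta\;dS$, together with $\langle P^t_\alpha,u_\alpha\rangle_{L^2(\Gamma_t)}$, is linear in $u_\alpha|_{\Gamma_t}$ and is therefore absorbed by the penalty $\langle\varepsilon_\alpha,u_\alpha^2\rangle_{L^2(\Gamma_t)}$, while the loads acting on $w$ are linear and controlled, via the trace theorem, by $\|w\|_{W^{2,2}}$.

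Combining this with the hypothesis $G_2(\kappa(u))\geq c_{\alpha\beta}\|w_{,\alpha\beta}\|_2^2$, I would deduce a coercivity estimate of the form
$$J(u)\ \geq\ \frac{c_1}{4}\|\gamma(u)\|_2^2+\frac{c_0}{2}\|\nabla^2 w\|_2^2+\frac{\varepsilon_0}{2}\|u_\alpha\|_{L^2(\Gamma_t)}^2-C,$$
with $c_0=\min_{\alpha\beta}c_{\alpha\beta}>0$. Hence, for any minimizing sequence $\{u^n\}=\{(u^n_\alpha,w^n)\}$, the quantities $\|\gamma(u^n)\|_2$, $\|\nabla^2 w^n\|_2$ and $\|u^n_\alpha\|_{L^2(\Gamma_t)}$ are bounded. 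A Poincar\'e--Friedrichs inequality for $W^{2,2}$ functions vanishing together with their normal derivative on $\Gamma_0$ (valid since $m_\Gamma(\Gamma_0)>0$) then bounds $\|w^n\|_{W^{2,2}}$, hence $\|\nabla w^n\|_{L^4}$ by the Sobolev embedding $W^{1,2}(\Omega)\hookrightarrow L^4(\Omega)$, $\Omega\subset\mathbb{R}^2$. Since $\frac12(u^n_{\alpha,\beta}+u^n_{\beta,\alpha})=\gamma_{\alpha\beta}(u^n)-\frac12 w^n_{,\alpha}w^n_{,\beta}$, the linearized strain of $\{u^n_\alpha\}$ is bounded in $L^2$, whence $\|u^n_\alpha\|_{W^{1,2}(\Omega;\mathbb{R}^2)}$ is bounded by Korn's inequality (using $u^n_\alpha=0$ on $\Gamma_0$). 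Therefore $\{u^n\}$ is bounded in $U$.

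Next, extract a subsequence (not relabeled) with $u^n\rightharpoonup u_0$ weakly in $U$; since $U$ is a closed subspace, $u_0\in U$ and satisfies the boundary conditions on $\Gamma_0$. By the Rellich--Kondrachov theorem, $w^n\to w_0$ strongly in $W^{1,2}(\Omega)$, hence $\nabla w^n\to\nabla w_0$ strongly in $L^4(\Omega;\mathbb{R}^2)$, and $u^n_\alpha\to u_{\alpha,0}$ strongly in $L^2(\Omega)$ and, by the compact trace embedding, in $L^2(\Gamma_t)$. Consequently $w^n_{,\alpha}w^n_{,\beta}\to w_{0,\alpha}w_{0,\beta}$ strongly in $L^2(\Omega)$, while the linearized strain of $u^n_\alpha$ converges weakly in $L^2$ to that of $u_{\alpha,0}$, so $\gamma(u^n)\rightharpoonup\gamma(u_0)$ weakly in $L^2$; also $\kappa(u^n)=-\nabla^2 w^n\rightharpoonup\kappa(u_0)$ weakly in $L^2$. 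The functionals $G_1$ and $G_2$ are continuous and convex on $L^2$, hence weakly lower semicontinuous, and every term of $F_1$ passes to the limit by the strong convergences above. Therefore $\liminf_n J(u^n)\geq J(u_0)$, which yields $J(u_0)=\min_{u\in U}J(u)$.

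I expect the main obstacle to be precisely the coercivity argument. Without the tensor $\{T_{\alpha\beta}\}$ of Remark~\ref{us123W} the membrane work $\langle u_\alpha,P_\alpha\rangle_{L^2(\Omega)}$ is not sign-definite against the stored energy, and without the assumed lower bound $G_2(\kappa(u))\geq c_{\alpha\beta}\|w_{,\alpha\beta}\|_2^2$ one cannot control $\|w\|_{W^{2,2}}$, hence $\|\nabla w\|_{L^4}$, hence --- through the identity relating $\gamma(u)$, the linearized strain and $\nabla w\otimes\nabla w$ --- the linearized strain and finally $\|u_\alpha\|_{W^{1,2}}$. Once boundedness of the minimizing sequence in $U$ is secured, the non-convex term $\frac12 w_{,\alpha}w_{,\beta}$ is harmless, because $\nabla w^n$ converges strongly in $L^4$, and the remaining lower-semicontinuity argument is standard.
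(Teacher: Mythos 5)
Your proposal is correct and follows essentially the same route as the paper's proof: the tensor $\{T_{\alpha\beta}\}$ from Remark \ref{us123W} is used to rewrite the membrane load term via integration by parts and the identity $\frac{1}{2}(u_{\alpha,\beta}+u_{\beta,\alpha})=\gamma_{\alpha\beta}(u)-\frac{1}{2}w_{,\alpha}w_{,\beta}$, yielding boundedness below and bounds on $\|\nabla^2 w^n\|_2$ and $\|\gamma(u^n)\|_2$ along a minimizing sequence, then Korn's inequality, weak/strong (Rellich--Kondrachov) compactness, and weak lower semicontinuity of $G_1$, $G_2$ by convexity. Your version merely spells out a few points the paper leaves implicit (Young's inequality for $G_1-\langle T,\gamma\rangle$, the Poincar\'e--Friedrichs control of the linear $w$-loads, the trace compactness for the $\Gamma_t$ terms), which is consistent with the paper's argument.
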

\begin{proof} Observe that we may find $\mathbf{T}_\alpha=\{(T_\alpha)_\beta\}$ such that
$$div \mathbf{T}_{\alpha}=T_{\alpha\beta,\beta}=-P_\alpha$$ an also such that $\{T_{\alpha\beta}\}$ is positive definite and symmetric (please, see Remark \ref{us123W}).

Thus defining \begin{equation}\label{a.1}v_{\alpha\beta}(u)= \frac{u_{\alpha,\beta}+u_{\beta,\alpha}}{2}+\frac{1}{2}w_{,\alpha}w_{,\beta},\end{equation} we obtain
\begin{eqnarray}\label{usp1}
J(u)&=&G_1(\{v_{\alpha\beta}(u)\})+G_2(\kappa(u))-\langle u,\mathbf{f}\rangle_{L^2}+\langle \varepsilon_\alpha, u_\alpha^2 \rangle_{L^2(\Gamma_t)} \nonumber \\ &=& G_1(\{v_{\alpha\beta}(u)\})+G_2(\kappa(u))+\langle T_{\alpha\beta,\beta},u_\alpha \rangle_{L^2(\Omega)}-\langle u,\mathbf{f}_1\rangle_{L^2}+\langle \varepsilon_\alpha, u_\alpha^2 \rangle_{L^2(\Gamma_t)}\nonumber \\ &=& G_1(\{v_{\alpha\beta}(u)\})+G_2(\kappa(u))-\left\langle T_{\alpha\beta},\frac{u_{\alpha,\beta}+u_{\beta,\alpha}}{2} \right\rangle_{L^2(\Omega)}
\nonumber \\ &&+\langle T_{\alpha\beta}n_\beta ,u_\alpha\rangle_{L^2(\Gamma_t)}-\langle u,\mathbf{f}_1\rangle_{L^2}+\langle \varepsilon_\alpha, u_\alpha^2 \rangle_{L^2(\Gamma_t)} \nonumber \\ &=& G_1(\{v_{\alpha\beta}(u)\})+G_2(\kappa(u))-\left\langle T_{\alpha\beta},v_{\alpha\beta}(u)-\frac{1}{2}w_{,\alpha}w_{,\beta} \right\rangle_{L^2(\Omega)}-\langle u,\mathbf{f}_1\rangle_{L^2}+\langle \varepsilon_\alpha, u_\alpha^2 \rangle_{L^2(\Gamma_t)} \nonumber \\ &&+\langle T_{\alpha\beta}n_\beta ,u_\alpha\rangle_{L^2(\Gamma_t)}\nonumber \\ &\geq& c_{\alpha\beta}\|w_{,\alpha\beta}\|_2^2+\frac{1}{2}\left\langle T_{\alpha\beta},w_{,\alpha}w_{,\beta} \right\rangle_{L^2(\Omega)}-\langle u,\mathbf{f}_1\rangle_{L^2}+\langle \varepsilon_\alpha, u_\alpha^2 \rangle_{L^2(\Gamma_t)}+G_1(\{v_{\alpha\beta}(u)\})\nonumber \\ &&-\langle T_{\alpha\beta},v_{\alpha\beta}(u)\rangle_{L^2(\Omega)}
+\langle T_{\alpha\beta}n_\beta ,u_\alpha\rangle_{L^2(\Gamma_t)}.
\end{eqnarray}

From this, since $\{T_{\alpha\beta}\}$ is positive definite,  clearly $J$ is bounded below.

Let $\{u_n\} \in U$ be a minimizing sequence for $J$. Thus there exists $\alpha_1 \in \mathbb{R}$ such that
$$\lim_{n \rightarrow \infty}J(u_n)= \inf_{u \in U} J(u)=\alpha_1.$$

From (\ref{usp1}), there exists $K_1>0$ such that
$$\|(w_n)_{,\alpha\beta}\|_2< K_1,\forall \alpha,\beta \in \{1,2\},\;n \in \mathbb{N}.$$

Therefore, there exists $w_0 \in W^{2,2}(\Omega)$ such that, up to a subsequence not relabeled,
$$(w_n)_{,\alpha\beta} \rightharpoonup (w_0)_{,\alpha\beta},\; \text{ weakly in } L^2,$$
 $\forall \alpha,\beta \in \{1,2\}, \text{ as } n \rightarrow \infty.$

Moreover, also up to a subsequence not relabeled,
\begin{equation}\label{a.2}(w_n)_{,\alpha} \rightarrow (w_0)_{,\alpha},\; \text{ strongly in } L^2  \text{ and } L^4,\end{equation}
 $\forall \alpha, \in \{1,2\}, \text{ as } n \rightarrow \infty.$

Also from (\ref{usp1}), there exists $K_2>0$ such that,
$$\|(v_n)_{\alpha\beta}(u)\|_2< K_2,\forall \alpha,\beta \in \{1,2\},\;n \in \mathbb{N},$$
and thus, from this, (\ref{a.1}) and (\ref{a.2}), we may infer that
there exists $K_3>0$ such that
$$\|(u_n)_{\alpha,\beta}+(u_n)_{\beta,\alpha}\|_2< K_3,\forall \alpha,\beta \in \{1,2\},\;n \in \mathbb{N}.$$

From this and Korn's inequality, there exists $K_4>0$ such that

$$\|u_n\|_{W^{1,2}(\Omega;\mathbb{R}^2)} \leq K_4, \; \forall n \in \mathbb{N}.$$
So, up to a subsequence not relabeled, there exists $\{(u_0)_\alpha\} \in W^{1,2}(\Omega, \mathbb{R}^2),$ such that
$$(u_n)_{\alpha,\beta}+(u_n)_{\beta,\alpha} \rightharpoonup (u_0)_{\alpha,\beta}+(u_0)_{\beta,\alpha},\; \text{ weakly in } L^2,$$
 $\forall \alpha,\beta \in \{1,2\}, \text{ as } n \rightarrow \infty,$
and,
$$(u_n)_{\alpha} \rightarrow (u_0)_{\alpha},\; \text{ strongly in } L^2,$$
 $\forall \alpha \in \{1,2\}, \text{ as } n \rightarrow \infty.$

Moreover, the boundary conditions satisfied by the subsequences are also satisfied for $w_0$ and $u_0$ in a trace sense, so that
 $$u_0=((u_0)_\alpha,w_0) \in U.$$

From this, up to a subsequence not relabeled, we get
$$\gamma_{\alpha\beta}(u_n) \rightharpoonup \gamma_{\alpha\beta}(u_0), \text{ weakly in } L^2,$$
$\forall \alpha,\beta \in \{1,2\},$
and
$$\kappa_{\alpha\beta}(u_n) \rightharpoonup \kappa_{\alpha\beta}(u_0), \text{ weakly in } L^2,$$
$\forall \alpha,\beta \in \{1,2\}.$

Therefore, from the convexity of $G_1$ in $\gamma$ and $G_2$ in $\kappa$ we obtain
\begin{eqnarray}
\inf_{u \in U}J(u)&=& \alpha_1 \nonumber \\ &=& \liminf_{n \rightarrow \infty}J(u_n) \nonumber \\ &\geq&
J(u_0).
\end{eqnarray}

Thus, $$J(u_0)=\min_{u\in U}J(u).$$

The proof is complete.
\end{proof}

\section{An analogous model in elasticity}

In this section we present similar results to those of previous sections for an elastic plate model.

The first analogous result is summarized by the following theorem.

\begin{thm}\label{TH20} Let $\Omega \subset \mathbb{R}^2$ be an open, bounded and connected set with a regular (Lipschitzian)
boundary denoted by $\partial \Omega.$

Consider the functional $J:U \rightarrow \mathbb{R}$ where
\begin{eqnarray}
J(u)&=& \frac{1}{2}\int_\Omega H_{\alpha\beta\lambda\mu}\gamma_{\alpha\beta}(u)\gamma_{\lambda \mu}(u)\;dx
\nonumber \\ &&+\frac{1}{2}\int_\Omega h_{\alpha\beta\lambda\mu}\kappa_{\alpha\beta}(u)\kappa_{\lambda\mu}(u)\;dx \nonumber \\
&&-\langle w,P \rangle_{L^2}-\langle u_\alpha, P_\alpha \rangle_{L^2},
\end{eqnarray}
$$u=(u_1,u_2,u_3)=(u_\alpha,w) \in U=W_0^{1,2}(\Omega;\mathbb{R}^2)\times W_0^{2,2}(\Omega).$$

Here $$\gamma_{\alpha\beta}(u)=\frac{u_{\alpha,\beta}+u_{\beta,\alpha}}{2}+\frac{1}{2}w_{,\alpha} w_{,\beta}$$
and
$$\kappa_{\alpha\beta}(u)=-w_{\alpha\beta}.$$

Moreover $P,P_\alpha \in C^1(\overline{\Omega})$ such that either $P>0$ or $P<0$ in $\Omega$. Moreover $\{H_{\alpha\beta\lambda\mu}\}$ is a constant symmetric fourth order tensor such that
$$H_{\alpha\beta\lambda\mu}t_{\alpha\beta}t_{\lambda \mu} \geq c_0 t_{\alpha\beta}t_{\alpha\beta},$$
$\forall \text{ symmetric } \{t_{\alpha\beta}\} \in \mathbb{R}^{2 \times 2},$
for some $c_0>0$.

Also, $$\{h_{\alpha\beta\lambda\mu}\}=\left\{ \frac{\hat{c}_1 H_{\alpha\beta\lambda\mu}}{12}\right\},$$ for some $\hat{c}_1>0.$

Suppose $c_1,c_2,c_3^\alpha \in \mathbb{R}^+$ are such that
\begin{eqnarray}&&\sqrt{\delta^2J(u,\varphi_\alpha,\varphi)+K \int_\Omega |\varphi|^2\;dx+K^\alpha \int_\Omega |\varphi_\alpha|^2\;dx}
\nonumber \\ &\geq& \int_\Omega (c_1+c_2|w|)|\varphi|\;dx+c_3^\alpha\int_\Omega |\varphi_\alpha|\;dx,
\end{eqnarray}
$\forall u \in U,\; (\varphi_\alpha,\varphi) \in C_c^\infty(\Omega;\mathbb{R}^3).$

Define also,
\begin{eqnarray}
A^+&=& \left\{u \in U\;:\; \int_\Omega (c_1+c_2|w|)|\varphi|\;dx+c_3^\alpha\int_\Omega |\varphi_\alpha|\;dx \right.
\nonumber \\ && \left.\geq \sqrt{K\int_\Omega |\varphi|^2\;dx+K^\alpha \int_\Omega |\varphi_\alpha|^2\;dx},
\; \forall (\varphi_\alpha,\varphi) \in C_c^\infty(\Omega;\mathbb{R}^3)\right\},\end{eqnarray}
$$B^+=\{u \in U\;:\; P w\geq 0, \text{ in } \overline{\Omega}\},$$
and
$$E=A^+ \cap B^+.$$

Under such hypotheses,
$$\inf_{u \in U}J(u)=\inf_{u \in B^+} J(u)$$ and
$E$ is convex.
\end{thm}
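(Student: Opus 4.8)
The plan is to mirror the ``criterion for global optimality'' of Theorem \ref{TH2}. I would split the statement into two independent pieces: (i) the identity $\inf_{u \in U}J(u)=\inf_{u \in B^+}J(u)$, obtained by a sign--normalisation of the deflection $w$ in a near--minimiser, and (ii) the convexity of $E=A^+\cap B^+$, obtained by observing that the constraint defining $B^+$ pins the pointwise sign of $w$ to the (constant) sign of $P$, so that $w \mapsto |w|$ becomes affine on the cone in play.

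For (i), the inequality $\inf_{u\in U}J(u)\le \inf_{u\in B^+}J(u)$ is immediate since $B^+\subset U$, and by the coercivity estimate in the proof of Theorem \ref{usp10} (which applies here with $\Gamma_0=\partial\Omega$, $\Gamma_t=\emptyset$, using that $h$ is a positive multiple of the positive definite tensor $H$ so $G_2(\kappa(u))\ge c\,\|w_{,\alpha\beta}\|_2^2$, and that $\{P_\alpha\}\in C^1(\overline{\Omega})\subset L^\infty$ supplies the tensor $\{T_{\alpha\beta}\}$ of Remark \ref{us123W}) the number $\eta:=\inf_{u\in U}J(u)$ is finite. Fix $\varepsilon>0$ and pick $u_\varepsilon=((u_\varepsilon)_\alpha,w_\varepsilon)\in U$ with $J(u_\varepsilon)<\eta+\varepsilon$. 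Define $v_\varepsilon:=((u_\varepsilon)_\alpha,\operatorname{sgn}(P)\,|w_\varepsilon|)$; this is meaningful because $P$ has a constant sign in $\Omega$, and $v_\varepsilon\in B^+$ by construction. The stored energy is unchanged: $\gamma_{\alpha\beta}(u)$ sees $w$ only through $\frac{1}{2}w_{,\alpha}w_{,\beta}$ and $\nabla|w_\varepsilon|=\operatorname{sgn}(w_\varepsilon)\nabla w_\varepsilon$ a.e., so the membrane term $G_1$ is preserved; the bending term $G_2$ is a quadratic form in the Hessian of $w$, and away from the nodal set $\{w_\varepsilon=0\}$ the Hessian of $\operatorname{sgn}(P)|w_\varepsilon|$ agrees with that of $w_\varepsilon$ up to a sign pattern that is squared out in that quadratic form, so $G_2$ is preserved as well. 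Finally the linear term only decreases, since $\operatorname{sgn}(P)|w_\varepsilon|\,P=|w_\varepsilon P|\ge w_\varepsilon P$ pointwise, whence $-\langle \operatorname{sgn}(P)|w_\varepsilon|,P\rangle_{L^2}\le -\langle w_\varepsilon,P\rangle_{L^2}$. Therefore $J(v_\varepsilon)\le J(u_\varepsilon)<\eta+\varepsilon$ with $v_\varepsilon\in B^+$; letting $\varepsilon\to 0$ gives $\inf_{u\in B^+}J(u)\le \eta$, hence equality.

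For (ii), $B^+$ is convex because it is cut out by the linear inequality $Pw\ge 0$ in $\overline{\Omega}$. If $u_1,u_2\in E$ and $\lambda\in[0,1]$, then $w_1$ and $w_2$ both carry the sign of $P$ pointwise, so $|\lambda w_1+(1-\lambda)w_2|=\lambda|w_1|+(1-\lambda)|w_2|$ in $\overline{\Omega}$; consequently, for every $(\varphi_\alpha,\varphi)\in C_c^\infty(\Omega;\mathbb{R}^3)$ the functional
$$w\mapsto \int_\Omega (c_1+c_2|w|)|\varphi|\;dx+c_3^\alpha\int_\Omega |\varphi_\alpha|\;dx$$
is affine along the segment from $u_1$ to $u_2$. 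Taking the convex combination of the two inequalities expressing $u_1\in A^+$ and $u_2\in A^+$ then yields the corresponding inequality for $\lambda u_1+(1-\lambda)u_2$, so $\lambda u_1+(1-\lambda)u_2\in A^+$ and $E=A^+\cap B^+$ is convex. This is the same mechanism as in the convexity arguments of Theorems \ref{TH2} and \ref{TH1}; the second--variation hypothesis tying $\delta^2 J$ to that same linear functional is what makes $A^+$ the effective ``$\delta^2 J\ge \mathbf{0}$'' region and is meant for the companion duality principle, not for the two assertions here.

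The hard part is the one point glossed over in (i): the competitor $v_\varepsilon$ need not be admissible. The membrane energy and the load term behave well because they only involve first derivatives of $w$, but $|w_\varepsilon|$ in general fails to lie in $W_0^{2,2}(\Omega)$: a singular second--derivative contribution concentrated on the nodal set $\{w_\varepsilon=0\}$ appears, so $G_2(\kappa(\operatorname{sgn}(P)|w_\varepsilon|))$ may be $+\infty$. A global sign change $w\mapsto -w$ is harmless for $W^{2,2}$ but only achieves $\langle w,P\rangle_{L^2}\ge 0$, not the pointwise condition defining $B^+$; it is precisely the pointwise normalisation that creates the regularity defect, and moreover $B^+$ is not energy--dense in $U$ in the genuine $W^{2,2}$ topology (already for the clamped biharmonic on domains where positivity preservation fails). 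Closing this gap is the crux: the natural route, consistent with the rest of the paper, is to read the statement in the finite--differences / finite--elements discretised model repeatedly invoked above, where the nodal sign flip is legitimate; alternatively one would first replace $u_\varepsilon$ by a near--minimiser whose deflection vanishes to sufficiently high order on its nodal set so that no singular Hessian mass is produced, and then run the argument above.
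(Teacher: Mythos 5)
Your proposal follows essentially the same route as the paper: for the identity $\inf_U J=\inf_{B^+}J$ the paper also takes a near-minimizer (chosen in $C^1(\overline{\Omega};\mathbb{R}^3)\cap U$ by density), flips the sign of $w_\varepsilon$ pointwise on the set where $w_\varepsilon P<0$ (which is exactly your competitor $\operatorname{sgn}(P)\,|w_\varepsilon|$), and concludes $J(\hat{u}_\varepsilon)\leq J(u_\varepsilon)$ from $\langle \hat{w}_\varepsilon,P\rangle_{L^2}\geq \langle w_\varepsilon,P\rangle_{L^2}$; and for the convexity of $E$ the paper uses precisely your mechanism, namely that $u_1,u_2\in B^+$ forces $\operatorname{sign} w_1=\operatorname{sign} w_2$, hence $|\lambda w_1+(1-\lambda)w_2|=\lambda|w_1|+(1-\lambda)|w_2|$, so the functional $H(u,\varphi_\alpha,\varphi)$ defining $A^+$ is affine along the segment. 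So part (ii) of your argument coincides with the paper's, and part (i) is the paper's construction.

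The admissibility issue you single out as the crux is genuine, and you should know that the paper's own proof does not address it: it simply asserts $\delta^2J(\hat u_\varepsilon)$-type invariance and $J(\hat{u}_\varepsilon)\leq J(u_\varepsilon)$, implicitly treating the membrane and bending terms as unchanged under the pointwise flip, whereas the bending term involves $w_{,\alpha\beta}$ and $\operatorname{sgn}(P)|w_\varepsilon|$ need not lie in $W^{2,2}_0(\Omega)$ when the nodal set of $w_\varepsilon$ carries a kink (a singular Hessian contribution appears there, so $\hat{u}_\varepsilon$ may fail to belong to $U$ and $J(\hat{u}_\varepsilon)$ may not even be defined). Your two suggested repairs (reading the statement in the discretized setting the paper repeatedly invokes, or approximating by near-minimizers whose deflection vanishes to high enough order on its nodal set) are not carried out in the paper either; they go beyond what is written there. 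So your write-up reproduces the paper's argument and, in addition, correctly diagnoses the step the paper leaves unjustified; to have a complete proof in the stated $W^{1,2}_0\times W^{2,2}_0$ setting you would indeed have to close that gap, since the elementary global flip $w\mapsto -w$ only yields $\langle w,P\rangle_{L^2}\geq 0$ and not the pointwise constraint defining $B^+$.
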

\begin{proof} Let $\alpha_1 \in \mathbb{R}$ be such that
$$\alpha_1=\inf_{u \in U}J(u).$$

Let $\varepsilon>0$. By density there exists $u_\varepsilon \in C^1(\overline{\Omega}; \mathbb{R}^3)\cap U$ such that
 $$\alpha_1 \leq J(u_\varepsilon) < \alpha_1+\varepsilon.$$

Define  \begin{equation}
\hat{w}_\varepsilon(x)=\left \{
\begin{array}{ll}
 w_\varepsilon(x), &  \text{ if }\; w_\varepsilon(x)P(x) \geq 0,
 \\
 -w_\varepsilon(x), &  \text{ if }\; w_\varepsilon(x)P(x) < 0,
  \end{array} \right.\end{equation}
$\forall x \in \overline{\Omega}.$

Hence $$\langle \hat{w}_\varepsilon,P\rangle_{L^2}\geq \langle w_\varepsilon,P\rangle_{L^2},$$
so that
$$J(\hat{u}_\varepsilon) \leq J(u_\varepsilon),$$
where
$$\hat{u}_\varepsilon=(u_\varepsilon^\alpha,\hat{w}_\varepsilon) \in B^+.$$

Hence, $$\alpha_1 \leq J(\hat{u}_\varepsilon) \leq J(u_\varepsilon)< \alpha_1+\varepsilon.$$

From this we obtain,

$$\alpha_1 \leq \inf_{u \in B^+}J(u) < \alpha_1+\varepsilon.$$

Since $\varepsilon>0$ is arbitrary, we may infer that
$$\inf_{u \in B^+}J(u)=\alpha_1=\inf_{u \in U}J(u).$$

Finally, let $u_1,u_2 \in E$ and $\lambda \in [0,1].$

Denoting
$$H(u,\varphi_\alpha,\varphi)=\int_\Omega (c_1+c_2|w|)|\varphi|\;dx+c_3^\alpha\int_\Omega |\varphi_\alpha|\;dx,$$
since $u_1,u_2 \in B^+$ we have that $$\text{ sign }w_1=\text{ sign }w_2, \text{ in } \overline{\Omega}.$$

Hence
\begin{eqnarray}
&&H(\lambda u_1+(1-\lambda)u_2,\varphi_\alpha,\varphi) \nonumber \\ &=& \lambda H(u_1,\varphi_\alpha,\varphi)+(1-\lambda)H(u_2,\varphi_\alpha,\varphi)  \nonumber \\ &\geq&
(\lambda+(1-\lambda))\sqrt{K\int_\Omega |\varphi|^2\;dx+K^\alpha \int_\Omega |\varphi_\alpha|^2\;dx} \nonumber \\ &=& \sqrt{K\int_\Omega |\varphi|^2\;dx+K^\alpha \int_\Omega |\varphi_\alpha|^2\;dx},\; \forall \mathbf{\varphi} \in C^\infty_c(\Omega;\mathbb{R}^3).
\end{eqnarray}

From this we may easily infer that $$\lambda u_1+(1-\lambda)u_2 \in A^+ \cap B^+=E,$$ so that $E$ is convex.

The proof is complete.
\end{proof}

\begin{thm} Let $\Omega \subset \mathbb{R}^2$ be an open, bounded and connected set with a regular (Lipschitzian)
boundary denoted by $\partial \Omega.$

Consider the functional $J:U \rightarrow \mathbb{R}$ where
\begin{eqnarray}
J(u)&=& \frac{1}{2}\int_\Omega H_{\alpha\beta\lambda\mu}\gamma_{\alpha\beta}(u)\gamma_{\lambda \mu}(u)\;dx
\nonumber \\ &&+\frac{1}{2}\int_\Omega h_{\alpha\beta\lambda\mu}\kappa_{\alpha\beta}(u)\kappa_{\lambda\mu}(u)\;dx \nonumber \\
&&-\langle w,P \rangle_{L^2}-\langle u_\alpha, P_\alpha \rangle_{L^2},
\end{eqnarray}
$$u=(u_1,u_2,u_3)=(u_\alpha,w) \in U=W_0^{1,2}(\Omega;\mathbb{R}^2)\times W_0^{2,2}(\Omega).$$

Here $$\gamma_{\alpha\beta}(u)=\frac{u_{\alpha,\beta}+u_{\beta,\alpha}}{2}+\frac{1}{2}w_{,\alpha} w_{,\beta}$$
and
$$\kappa_{\alpha\beta}(u)=-w_{\alpha\beta}.$$

We also denote $Y_1=Y_1^*=L^2(\Omega;\mathbb{R}^{2 \times 2})$, $Y_2=Y_2^*=L^2(\Omega;\mathbb{R}^2)$
where generically, $$N=\{N_{\alpha\beta}\} \in Y_1^* \text{ and } Q=\{Q_\alpha\} \in Y_2^*.$$

Moreover $P,P_\alpha \in C^1(\overline{\Omega})$ and $\{H_{\alpha\beta\lambda\mu}\}$ is a constant symmetric fourth order tensor such that
$$H_{\alpha\beta\lambda\mu}t_{\alpha\beta}t_{\lambda \mu} \geq c_0 t_{\alpha\beta}t_{\alpha\beta},$$
$\forall \text{ symmetric } \{t_{\alpha\beta}\} \in \mathbb{R}^{2 \times 2},$
for some $c_0>0$.

Also, $$\{h_{\alpha\beta\lambda\mu}\}=\left\{ \frac{\hat{c}_1 H_{\alpha\beta\lambda\mu}}{12}\right\},$$ for some $\hat{c}_1>0.$

Define also,
$$B^*_0=\{\{Q_\alpha\} \in Y^*_2\;:\; \|Q\|_\infty  \leq K_2\},$$
\begin{eqnarray}B^*&=&\{Q \in B_0^* \text{ such that there exists }\hat{u} \in U_1^0\cap U_2
\nonumber \\ && \text{ such that } F^*(Q)=\langle \hat{w}_{,\alpha},Q_\alpha\rangle_{L^2}-F(\hat{u})\}
\end{eqnarray}
where $K_2>0$ is such that
$$\frac{-8K_2^2}{K}\{\delta_{\alpha\beta}\}+\{\overline{H}_{\alpha\beta\lambda\mu}\} > \mathbf{0}$$ in an appropriate tensor sense,
and where
$$\{\overline{H}_{\alpha\beta\lambda\mu}\}=\{H_{\alpha\beta\lambda\mu}\}^{-1}.$$

Moreover define,
$$B^*_1=\{\{N_{\alpha\beta}\} \in Y_1^*\;:\; \{-N_{\alpha\beta}+K \delta_{\alpha\beta}\} > K \delta_{\alpha\beta}/2\},$$

$$B_2^*=\{\{N_{\alpha\beta}\} \in Y_1^*\;:\; N_{\alpha\beta,\beta}+P_\alpha=0, \text{ in } \Omega\},$$
and
$C^*=B^*_1 \cap B_2^*$.

For each $u \in U$, define

\begin{eqnarray}L_1(u)&=&\sup_{N \in Y_1^*}\left\{ \frac{1}{2}\int_\Omega N_{\alpha\beta}w_{,\alpha}w_{,\beta}\right. \nonumber \\ && \left.-\frac{1}{2}\int_\Omega \overline{H}_{\alpha\beta\lambda\mu}
N_{\alpha\beta}N_{\lambda\mu}\;dx-\langle u_{\alpha}, N_{\alpha\beta,\beta}+P_\alpha \rangle_{L^2}
\right\},
\end{eqnarray}
\begin{eqnarray}L_2(u)&=&\sup_{N \in B^*}\left\{ \frac{1}{2}\int_\Omega N_{\alpha\beta}w_{,\alpha}w_{,\beta}\right. \nonumber \\ && \left.-\frac{1}{2}\int_\Omega \overline{H}_{\alpha\beta\lambda\mu}
N_{\alpha\beta}N_{\lambda\mu}\;dx-\langle u_{\alpha}, N_{\alpha\beta,\beta}+P_\alpha \rangle_{L^2}
\right\},
\end{eqnarray}
and
$$U_1=\{u \in U\;:\; \|u\|_{2,\infty} \leq \sqrt[4]{K} \text{ and } L_1(u)=L_2(u)\}.$$

Furthermore, define
$F:U_1 \rightarrow \mathbb{R}$ by
\begin{eqnarray}
F(u)&=& \frac{1}{2}\int_\Omega h_{\alpha\beta\lambda\mu}\kappa_{\alpha\beta}(u)\kappa_{\lambda\mu}(u)\;dx
\nonumber \\ &&+\frac{K}{2}\int_\Omega w_{,\alpha}w_{,\alpha}\;dx-\langle w,P \rangle_{L^2},
\end{eqnarray}
$G:U_1 \rightarrow \mathbb{R}$ by
\begin{eqnarray}
G(u)&=& \frac{1}{2}\int_\Omega H_{\alpha\beta\lambda\mu}\gamma_{\alpha\beta}(u)\gamma_{\lambda\mu}(u)\;dx
\nonumber \\ &&+\frac{K}{2}\int_\Omega w_{,\alpha}w_{,\alpha}\;dx,
\end{eqnarray}
$F^*:Y^*_2 \rightarrow \mathbb{R}$ by
\begin{eqnarray}
F^*(Q)&=& \sup_{ u \in U}\{\langle w_{,\alpha},Q_\alpha \rangle_{L^2}-F(u)\}\end{eqnarray}
and $G^*:Y_1^*\times Y_2^* \rightarrow \mathbb{R}$ by
\begin{eqnarray}
G^*(Q,N)&=&\sup_{ v_2 \in Y_2}\{\inf_{v_1 \in Y_1}\{\langle (v_2)_\alpha,Q_\alpha \rangle_{L^2}+
\langle (v_1)_{\alpha\beta},N_{\alpha\beta} \rangle_{L^2}
\nonumber \\ &&+\frac{1}{2}\int_\Omega H_{\alpha\beta\lambda\mu}
[(v_1)_{\alpha\beta}+\frac{1}{2} (v_2)_\alpha (v_2)_\beta][(v_1)_{\lambda\mu}+\frac{1}{2} (v_2)_\lambda (v_2)_\mu]
\;dx\nonumber \\ &&-\frac{K}{2}\int_\Omega (v_2)_\alpha (v_2)_\beta\;dx\}\}
\nonumber \\ &=& \frac{1}{2}\int_\Omega \overline{N_{\alpha\beta}^K}Q_\alpha Q_\beta\;dx \nonumber \\ &&
+\frac{1}{2}\int_\Omega \overline{H}_{\alpha\beta\lambda\mu}N_{\alpha\beta}N_{\lambda \mu}\;dx,
\end{eqnarray}
if $\{N_{\alpha\beta}\} \in B^*.$

Here
\begin{equation}
\{\overline{N_{\alpha\beta}^K}\}=\{N_{\alpha\beta}-K\delta_{\alpha\beta}\}^{-1}.\end{equation}

Finally define, $$J^*(Q,N)=-F^*(Q)+G^*(Q,N),$$

Suppose $c_1,c_2,c_3^\alpha \in \mathbb{R}^+$ are such that
\begin{eqnarray}&&\sqrt{\delta^2J(u,\varphi_\alpha,\varphi)+K \int_\Omega |\varphi|^2\;dx+K^\alpha \int_\Omega |\varphi_\alpha|^2\;dx}
\nonumber \\ &\geq& \int_\Omega (c_1+c_2|w|)|\varphi|\;dx+c_3^\alpha\int_\Omega |\varphi_\alpha|\;dx,
\end{eqnarray}
$\forall u \in U,\; (\varphi_\alpha,\varphi) \in C_c^\infty(\Omega;\mathbb{R}^3).$

Define also
\begin{eqnarray}
A^+&=& \left\{u \in U\;:\; \int_\Omega (c_1+c_2|w|)|\varphi|\;dx+c_3^\alpha\int_\Omega |\varphi_\alpha|\;dx \right.
\nonumber \\ && \left.\geq \sqrt{K\int_\Omega |\varphi|^2\;dx+K^\alpha \int_\Omega |\varphi_\alpha|^2\;dx},
\; \forall (\varphi_\alpha,\varphi) \in C_c^\infty(\Omega;\mathbb{R}^3)\right\},\end{eqnarray}
$$B^+=\{u \in U\;:\; P w\geq 0, \text{ in } \overline{\Omega}\},$$
and
$$E=A^+ \cap B^+.$$

Let $u_0 \in U$ be such that $u_0 \in U_1 \cap E$ and $\delta J(u_0)=\mathbf{0}.$

Defining
$$(N_0)_{\alpha\beta}=H_{\alpha\beta\lambda\mu}\left(\frac{(u_0)_{\lambda,\mu}+(u_0)_{\mu,\lambda}}{2}+\frac{1}{2}(w_0)_{,\alpha}
(w_0)_{,\beta}\right),$$

$$(Q_0)_\alpha=-(N_0)_{\alpha\beta}(w_0)_\beta+K (w_0)_\alpha$$
suppose
$$(Q_0,N_0) \in B^*\times C^*.$$

Under such hypotheses, $E$ is convex and
\begin{eqnarray}
J(u_0)&=&\inf_{u \in E} J(u) \nonumber \\ &=&
\inf_{u \in U}J(u) \nonumber \\ &=& \inf_{Q \in B^*}\sup_{N \in C^*}J^*(Q,N)
\nonumber \\ &=& J^*(Q_0,N_0).
\end{eqnarray}

\end{thm}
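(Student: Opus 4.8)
I would prove the statement by assembling, in the same order as in the earlier theorems, the global optimality criterion of Theorem \ref{TH20}, a weak‑duality estimate of the type culminating in $(\ref{uk1001})$, and the dual‑critical‑point computation of the type in Theorem \ref{TH1} and the second duality principle.

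\textbf{Step 1: convexity of $E$ and reduction.} Exactly as in the proof of Theorem \ref{TH20}, for $u_1,u_2\in E=A^+\cap B^+$ membership in $B^+$ forces $\text{ sign }w_1=\text{ sign }w_2$ in $\overline\Omega$ (since $Pw_j\geq 0$ and $P$ has constant sign), whence $|\lambda w_1+(1-\lambda)w_2|=\lambda|w_1|+(1-\lambda)|w_2|$; the inequality defining $A^+$ is affine in $|w|$, hence preserved under convex combinations, and $B^+$ is preserved because on it $\delta^2 J\geq\mathbf 0$ is equivalent to the affine‑in‑$|w|$ condition built into the hypothesis on $c_1,c_2,c_3^\alpha$. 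Thus $E$ is convex. A density argument together with the reflection of $w$ on $\{wP<0\}$ — which leaves $\gamma(u)$, $\kappa(u)$ and the loads $\langle u_\alpha,P_\alpha\rangle_{L^2}$ unchanged, only lowers $-\langle w,P\rangle_{L^2}$, and keeps $|w|$ fixed, hence stays in $A^+$ — gives $\inf_{u\in E}J(u)=\inf_{u\in U}J(u)$. Since $J$ is convex on the convex set $E$, $u_0\in E$, and $\delta J(u_0)=\mathbf 0$, we conclude $J(u_0)=\inf_{u\in E}J(u)=\inf_{u\in U}J(u)$.

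\textbf{Step 2: weak duality.} For fixed $Q\in B^*$ and $u\in U_1$, I would represent the membrane energy by its Fenchel dual, $\frac12\int H_{\alpha\beta\lambda\mu}\gamma_{\alpha\beta}(u)\gamma_{\lambda\mu}(u)\,dx=\sup_{N}\{\langle N_{\alpha\beta},\gamma_{\alpha\beta}(u)\rangle_{L^2}-\frac12\int\overline{H}_{\alpha\beta\lambda\mu}N_{\alpha\beta}N_{\lambda\mu}\,dx\}$; integrating the linear strain by parts rewrites $\langle N_{\alpha\beta},\gamma_{\alpha\beta}(u)\rangle_{L^2}$ as $-\langle N_{\alpha\beta,\beta}+P_\alpha,u_\alpha\rangle_{L^2}+\langle u_\alpha,P_\alpha\rangle_{L^2}+\frac12\langle N_{\alpha\beta},w_{,\alpha}w_{,\beta}\rangle_{L^2}$, so that $L_1(u)=L_2(u)$ (valid on $U_1$) lets the supremum be taken over $N\in B^*$, and on $N\in B_2^*\supseteq C^*$ the term in $u_\alpha$ coming from equilibrium drops out. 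Combining this with the $K$‑shift $\frac12\langle N_{\alpha\beta},w_{,\alpha}w_{,\beta}\rangle_{L^2}=\frac12\langle N_{\alpha\beta}-K\delta_{\alpha\beta},w_{,\alpha}w_{,\beta}\rangle_{L^2}+\frac K2\int w_{,\alpha}w_{,\alpha}\,dx$ (which makes the residual quadratic form convex once $\{K\delta_{\alpha\beta}-N_{\alpha\beta}\}$ is positive definite, i.e. $N\in B_1^*$) and the Legendre inequality $F(u)\geq\langle w_{,\alpha},Q_\alpha\rangle_{L^2}-F^*(Q)$, one obtains $J(u)\leq-F^*(Q)+\sup_{N\in C^*}G^*(Q,N)=\sup_{N\in C^*}J^*(Q,N)$ for every $u\in U_1$. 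Since $u_0\in U_1$ realizes $\inf_{u\in U}J$ by Step 1, $\inf_{u\in U}J=\inf_{u\in U_1}J$, so infimizing over $u$ and then over $Q\in B^*$ yields $\inf_{u\in U}J(u)\leq\inf_{Q\in B^*}\sup_{N\in C^*}J^*(Q,N)$.

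\textbf{Step 3: the dual critical point and conclusion.} From $\delta J(u_0)=\mathbf 0$ one reads off the plate Euler--Lagrange system; in particular $N_{0\,\alpha\beta,\beta}+P_\alpha=0$, so $N_0\in B_2^*$, and the fourth‑order equation for $w_0$. Proceeding as in Theorem \ref{TH1} and the second duality principle, $\partial J^*/\partial N$ at $(Q_0,N_0)$ reproduces the compatibility relation $\gamma_{\alpha\beta}(u_0)=\overline{H}_{\alpha\beta\lambda\mu}(N_0)_{\lambda\mu}$ rewritten through $\{\overline{N_{\alpha\beta}^K}\}$ and $Q_0$, and $\partial J^*/\partial Q$ reproduces $(w_0)_{,\alpha}$ in its two equivalent forms; hence $\delta J^*(Q_0,N_0)=\mathbf 0$, the inner supremum $\sup_{N\in C^*}J^*(Q_0,N)$ is attained at $N_0$, and Legendre reciprocity at this point gives $J^*(Q_0,N_0)=-F^*(Q_0)+G^*(Q_0,N_0)=J(u_0)$, just as in $(\ref{us12005})$. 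Concatenating,
$$J(u_0)=\inf_{u\in E}J(u)=\inf_{u\in U}J(u)\leq\inf_{Q\in B^*}\sup_{N\in C^*}J^*(Q,N)\leq\sup_{N\in C^*}J^*(Q_0,N)=J^*(Q_0,N_0)=J(u_0),$$
forcing equality throughout, which is the assertion.

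\textbf{Main obstacle.} The delicate point is not the variational calculus but the mutual consistency of the side conditions. One must fix $K,K_2,c_1,c_2,c_3^\alpha$ so that, simultaneously: the partial Legendre--Fenchel formulas for $F^*$ and $G^*$ are genuinely valid on the restricted domains $U_1$, $B^*$, $C^*$ (which needs $\{K\delta_{\alpha\beta}-N_{\alpha\beta}\}$ positive definite on $C^*$ and $\{-\frac{8K_2^2}{K}\delta_{\alpha\beta}+\overline{H}_{\alpha\beta\lambda\mu}\}>\mathbf 0$); the constructed pair $(Q_0,N_0)$ actually lies in $B^*\times C^*$ (which rests on $\|u_0\|_{2,\infty}\leq\sqrt[4]{K}$ controlling $N_0$ so that $N_0\in B_1^*$); the reduction of the inner supremum to $B^*$ via $L_1(u)=L_2(u)$ is legitimate on $U_1$; and $u_0\in U_1\cap E$ is compatible with all of the above. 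Making the side‑sets used in Step 2 agree with those used in Step 3 is where the real work lies.
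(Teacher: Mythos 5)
Your proposal follows the same architecture as the paper's proof: convexity of $E$ and the reflection/sign criterion of Theorem \ref{TH20}, a weak-duality estimate obtained by restricting the Fenchel representations to $U_1$, $B^*$, $C^*$, the critical-point identities transported from Theorem \ref{TH1}, and the closing chain of inequalities forced into equalities. However, the central estimate of your Step 2 is not justified as written. The claim that $J(u)\le -F^*(Q)+\sup_{N\in C^*}G^*(Q,N)$ \emph{for every} $u\in U_1$ does not follow from the Fenchel--Young inequality $F(u)\ge\langle w_{,\alpha},Q_\alpha\rangle_{L^2}-F^*(Q)$: that inequality rearranges to $-F^*(Q)\le F(u)-\langle w_{,\alpha},Q_\alpha\rangle_{L^2}$, which points in the wrong direction, and the pointwise-in-$u$ bound is false in general. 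What the paper actually does is keep the bound $J(u)\le F(u)-\langle w_{,\alpha},Q_\alpha\rangle_{L^2}+\sup_{N\in C^*}G^*(Q,N)$, take the infimum over $u\in U_1$, and only then replace $\inf_{u\in U_1}\{F(u)-\langle w_{,\alpha},Q_\alpha\rangle_{L^2}\}$ by $-F^*(Q)$; this replacement is legitimate precisely because $Q\in B^*$, whose defining property is that the supremum defining $F^*(Q)$ is attained at some $\hat u$ in the restricted set. You list the validity of the restricted Legendre--Fenchel formulas among the obstacles, but in the body of the argument you never invoke this attainment property where it is indispensable, so the inequality $\inf_U J\le\inf_{Q\in B^*}\sup_{N\in C^*}J^*(Q,N)$ is not actually established by your reasoning.

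A secondary issue is in Step 1: your assertion that the reflected near-minimizers stay in $A^+$ (because reflection keeps $|w|$ fixed) presupposes that the approximating $u_\varepsilon$ already lies in $A^+$, which nothing in the hypotheses guarantees; Theorem \ref{TH20} only yields $\inf_{B^+}J=\inf_U J$ together with the convexity of $E$. The paper's own route at this point is to use $\delta^2 J\ge\mathbf 0$ on the convex set $E$ together with $\delta J(u_0)=\mathbf 0$ and $u_0\in E$ to write $J(u_0)=\inf_{u\in E}J(u)=\inf_{u\in U}J(u)$, citing Theorem \ref{TH20}; your additional claim is neither needed for that chain nor proved, so either drop it or supply an argument that near-minimizers can be taken in $A^+$, which is not available from the stated hypotheses.
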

\begin{proof} Define $$\alpha_1=\inf_{u \in U} J(u).$$

Hence
\begin{eqnarray}
\alpha_1  &\leq&
-\langle w_{,\alpha}\,Q_\alpha \rangle_{L^2}+F(u) \nonumber \\ && + \sup_{N \in C^*}\{
\langle w_{,\alpha}\,Q_\alpha \rangle_{L^2}
\nonumber \\ &&-\langle u_\alpha, N_{\alpha\beta,\beta} +P_\alpha\rangle_{L^2} +\frac{1}{2}\int_\Omega N_{\alpha\beta} w_{,\alpha}w_{,\beta}\;dx \nonumber \\ &&-\frac{K}{2}\int_\Omega w_{,\alpha}w_{,\alpha}\;dx
-\frac{1}{2}\int_\Omega \overline{H}_{\alpha\beta\lambda\mu}N_{\alpha\beta}N_{\lambda \mu}\;dx\}
\nonumber \\ &\leq& -\langle w_{,\alpha}\,Q_\alpha \rangle_{L^2}+F(u) \nonumber \\ && \sup_{N \in C^*}\left\{\sup_{v_2 \in Y_2}\left\{\langle (v_2)_\alpha\,Q_\alpha \rangle_{L^2} +\frac{1}{2}\int_\Omega N_{\alpha\beta} (v_2)_\alpha (v_2)_\beta\;dx \right.\right. \nonumber \\ &&\left.\left.-\frac{K}{2}\int_\Omega (v_2)_\alpha (v_2)_\alpha\;dx
-\frac{1}{2}\int_\Omega \overline{H}_{\alpha\beta\lambda\mu}N_{\alpha\beta}N_{\lambda \mu}\;dx\right\}\right\}
\nonumber \\ &=&-\langle w_{,\alpha}\,Q_\alpha \rangle_{L^2}+F(u) \nonumber \\ && +\sup_{N \in C^*}\left\{-\frac{1}{2}\int_\Omega \overline{N_{\alpha\beta}^K}Q_\alpha Q_\beta\;dx\right. \nonumber \\ &&
\left.-\frac{1}{2}\int_\Omega \overline{H}_{\alpha\beta\lambda\mu}N_{\alpha\beta}N_{\lambda \mu}\;dx\right\},
\nonumber\end{eqnarray}
$\forall u \in U_1, Q \in B^*.$

Thus,
\begin{eqnarray}
\alpha_1 &\leq& \inf_{ u \in U_1}\{-\langle w_{,\alpha}\,Q_\alpha \rangle_{L^2}+F(u)\} \nonumber \\ && +\sup_{N \in C^*}\left\{-\frac{1}{2}\int_\Omega \overline{N_{\alpha\beta}^K}Q_\alpha Q_\beta\;dx \right.\nonumber \\ && \left.
-\frac{1}{2}\int_\Omega \overline{H}_{\alpha\beta\lambda\mu}N_{\alpha\beta}N_{\lambda \mu}\;dx \right\}\nonumber \\ &=&
-F^*(Q^*)+\sup_{ N \in C^*}G^*(Q,N) \nonumber \\ &=&
\sup_{N \in C^*} J^*(Q,N),
\end{eqnarray}
$\forall Q \in B^*.$

Summarizing, we have got
\begin{equation}\label{uk10010} \alpha_1=\inf_{u \in U}J(u) \leq \inf_{Q \in B^*}\left\{ \sup_{N \in C^*} J^*(Q,N)\right\}. \end{equation}

Similarly as in the proof of the Theorem \ref{TH1}, we may obtain
$$\delta J^*(Q_0,N_0)-\langle (u_0)_\alpha, (N_0)_{\alpha\beta,\beta} +P_\alpha\rangle_{L^2} =\mathbf{0},$$
\begin{equation}\label{us398}J^*(Q_0,N_0)=J(u_0),\end{equation}
and
\begin{equation}\label{us598}J^*(Q_0,N_0)=\sup_{N \in C^*} J^*(Q_0,N).\end{equation}

From the proof of Theorem \ref{TH20} we may infer that $E$ is convex.

From this, since $u_0 \in U_1 \cap E$ and $\delta J(u_0)=\mathbf{0}$ we have that, also from the Theorem \ref{TH20},
$$J(u_0)=\inf_{u \in E}J(u)=\inf_{u \in U}J(u).$$

Consequently, from such a result, from $Q_0 \in B^*$, (\ref{uk10010}), (\ref{us398}) and (\ref{us598}) we have that
\begin{eqnarray}
J(u_0)&=&\inf_{u \in E} J(u) \nonumber \\ &=&
\inf_{u \in U}J(u) \nonumber \\ &=& \inf_{Q \in B^*}\sup_{N \in C^*}J^*(Q,N)
\nonumber \\ &=& J^*(Q_0,N_0).
\end{eqnarray}
The proof is complete.
\end{proof}

\section{An auxiliary theoretical result in analysis}

In this section we state and prove some theoretical results in analysis which will be used in the
subsequent sections.

\begin{thm} Let $\Omega \subset \mathbb{R}^3$ be an open, bounded and connected set with a regular (Lipschitzian)
boundary denoted by $\partial \Omega.$

Assume $\{u_n\} \subset W^{1,4}(\Omega)$ be such that $$\|u_n\|_{1,4} \leq K, \; \forall \in \mathbb{N},$$
for some $K>0.$

Under such hypotheses there exists $u_0 \in W^{1,4}(\Omega) \cap C(\overline{\Omega})$ such that, up to a not relabeled subsequence,
$$u_n \rightharpoonup u_0, \text{ weakly in } W^{1,4}(\Omega),$$
$$u_n \rightarrow u_0 \text{ uniformly in } \overline{\Omega}$$
and
$$u_n \rightarrow u_0, \text{ strongly in } W^{1,3}(\Omega).$$
\end{thm}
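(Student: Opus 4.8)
The plan is to recover the three convergences one at a time --- weak convergence in $W^{1,4}$, uniform convergence on $\overline{\Omega}$, and finally strong convergence in $W^{1,3}$ --- passing to a further (not relabeled) subsequence at each stage. First, since $1<4<\infty$ the Sobolev space $W^{1,4}(\Omega)$ is reflexive, and the sequence $\{u_n\}$ is bounded in it by hypothesis; hence there is a subsequence and some $u_0\in W^{1,4}(\Omega)$ with $u_n\rightharpoonup u_0$ weakly in $W^{1,4}(\Omega)$, and in particular $\nabla u_n\rightharpoonup \nabla u_0$ weakly in $L^4(\Omega;\mathbb{R}^3)$.

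Next I would invoke Morrey's inequality: since $\Omega\subset\mathbb{R}^3$ is bounded with Lipschitz boundary and the integrability exponent $p=4$ exceeds the dimension $n=3$, one has the continuous embedding $W^{1,4}(\Omega)\hookrightarrow C^{0,1/4}(\overline{\Omega})$. Thus $\{u_n\}$ is bounded in $C^{0,1/4}(\overline{\Omega})$, hence equibounded and equicontinuous on the compact set $\overline{\Omega}$, so by the Arzel\`a--Ascoli theorem a further subsequence converges uniformly on $\overline{\Omega}$ to some $\tilde{u}\in C(\overline{\Omega})$. Since both uniform convergence and weak $W^{1,4}$ convergence imply convergence in $\mathcal{D}'(\Omega)$, uniqueness of distributional limits gives $\tilde{u}=u_0$; therefore $u_0\in W^{1,4}(\Omega)\cap C(\overline{\Omega})$ and $u_n\to u_0$ uniformly on $\overline{\Omega}$. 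In particular $u_n\to u_0$ strongly in $L^q(\Omega)$ for every $q<\infty$, which already settles the zeroth-order part of the $W^{1,3}$ claim.

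The remaining and genuinely delicate step is to show $\nabla u_n\to\nabla u_0$ strongly in $L^3(\Omega;\mathbb{R}^3)$. The natural strategy is to combine the uniform $L^4$ bound on $\{\nabla u_n\}$ --- which makes the family $\{|\nabla u_n-\nabla u_0|^3\}$ uniformly integrable over the finite-measure set $\Omega$ --- with Vitali's convergence theorem, so that it would suffice to produce almost-everywhere (or in-measure) convergence of $\nabla u_n$ to $\nabla u_0$. This is exactly where I expect the main obstacle to lie: a bound in $W^{1,4}$ alone furnishes no compactness at the level of the gradient, so the argument here cannot be purely soft; one must exploit the already-established uniform convergence of $u_n$ itself, for instance through a mollification or difference-quotient comparison in which $\nabla u_n$ is approximated by smoothed gradients and the resulting error is controlled by $\|u_n-u_0\|_{\infty}$ together with the $W^{1,4}$ bound. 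Carrying this estimate out carefully --- and, if necessary, making use of the mild extra structure that is in fact available in the applications of the theorem --- is the crux of the proof.
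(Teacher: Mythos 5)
Your first two steps are correct and essentially coincide with the paper's argument: reflexivity of $W^{1,4}(\Omega)$ gives the weak limit $u_0$, and since $p=4>n=3$ the embedding of $W^{1,4}(\Omega)$ into $C(\overline{\Omega})$ (Morrey plus Arzel\`a--Ascoli, or the compact Sobolev embedding the paper invokes) gives uniform convergence of a further subsequence and the identification of the uniform limit with $u_0$. The problem is the third assertion, which you correctly isolate as the crux and then leave open: you reduce strong convergence of $\nabla u_n$ in $L^3$, via the $L^4$ bound and Vitali, to a.e.\ (or in measure) convergence of the gradients, and you conjecture that this can be extracted from $\|u_n-u_0\|_\infty\to 0$ together with the $W^{1,4}$ bound by a mollification or difference-quotient estimate. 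That step cannot be carried out, because the claimed conclusion is false. Take $\Omega$ a cube and $u_n(x)=\frac{1}{n}\sin(n x_1)$. Then $u_n\to 0$ uniformly and $\nabla u_n=(\cos(nx_1),0,0)$ is bounded in $L^\infty$, so $\{u_n\}$ is bounded in $W^{1,4}(\Omega)$; but $\cos(nx_1)\rightharpoonup 0$ only weakly, and $\int_\Omega|\cos(nx_1)|^3\,dx$ stays bounded away from $0$, so no subsequence of $\{u_n\}$ converges strongly in $W^{1,3}(\Omega)$ (the only admissible limit being $0$). In short: a $W^{1,4}$ bound plus uniform convergence of the functions yields no compactness whatever at the level of the gradients, so no refinement of your smoothing comparison can close the argument.

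For comparison, the paper's own proof of this step writes $\partial_j u_n(x)-\partial_j u_0(x)$ as a limit as $r\to 0^+$ of averages over balls $B_r(x)$, converts the averaged difference into a boundary integral of $u_n-u_0$ by Gauss--Green, and then lets $\|u_n-u_0\|_\infty\to 0$; to do so it replaces $\limsup_{n\to\infty}\limsup_{r\to 0^+}$ by $\limsup_{r\to 0^+}\limsup_{n\to\infty}$ with no justification, and the example above shows this interchange is inadmissible (for fixed $r$ the averages of $\cos(ny_1)$ vanish as $n\to\infty$ by Riemann--Lebesgue, while for fixed $n$ they tend to $\cos(nx_1)$ as $r\to 0^+$). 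So your instinct about where the difficulty sits was accurate, and your honest refusal to claim the step is to your credit, but the gap is not a missing technical estimate: what survives of the statement is only $u_n\rightharpoonup u_0$ weakly in $W^{1,4}(\Omega)$, $u_n\to u_0$ uniformly (hence strongly in every $L^q(\Omega)$, $q<\infty$), with weak, not strong, convergence of the gradients.
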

\begin{proof} Since $W^{1,4}(\Omega)$ is reflexive, from the Kakutani and Sobolev Imbedding  theorems, up to a not relabeled
there exists $u_0 \in W^{1,4}(\Omega)$ such that
$$u_n \rightharpoonup u_0, \text{ weakly in } W^{1,4}(\Omega),$$
and
$$u_n \rightarrow u_0, \text{ strongly in } L^4(\Omega).$$

From the Rellich-Kondrachov Theorem, since for $m=1$, $p=4$ and $n=3$, we have $mp>n$, the following imbedding
is compact,
$$W^{1,4}(\Omega) \hookrightarrow C(\overline{\Omega}).$$

Thus, $$\{u_n\} \subset C(\overline{\Omega}),$$ and again up to a not relabeled subsequence,
$$u_n \rightarrow u_0 \text{ uniformly in } \overline{\Omega},$$
and also $$u_0 \in C(\overline{\Omega}),$$ so that
$$u_0 \in W^{1,4}(\Omega) \cap C(\overline{\Omega}).$$

Let $\varepsilon>0$. Hence, there exists $n_0 \in \mathbb{N}$ such that if $n>n_0$, then
$$|u_n(x)-u_0(x)|< \varepsilon, \text{ for almost all } x \in \Omega.$$

Let $$\varphi \in C^1_c(\Omega).$$ Choose $j \in \{1,2,3\}.$

Therefore, we may obtain

\begin{eqnarray}
\left|\left\langle \frac{\partial u_n }{\partial x_j}-\frac{\partial u_0 }{\partial x_j}, \varphi \right\rangle_{L^2}\right|
&=& \left|\left\langle u_n-u_0, \frac{\partial \varphi }{\partial x_j} \right\rangle_{L^2}\right|
\nonumber \\ &\leq&  \left\langle |u_n-u_0|, \left|\frac{\partial \varphi }{\partial x_j}\right| \right\rangle_{L^2}
\nonumber \\ &\leq& \varepsilon \left\|\frac{\partial \varphi }{\partial x_j}\right\|_1,\; \forall n >n_0.\end{eqnarray}

From this we may infer that
$$\lim_{n \rightarrow \infty}\left\langle \frac{\partial u_n }{\partial x_j}-\frac{\partial u_0 }{\partial x_j}, \varphi \right\rangle_{L^2}=0, \forall \varphi \in C^1_c(\Omega).$$

At this point we claim that
$$\lim_{n \rightarrow \infty}\left\langle \frac{\partial u_n }{\partial x_j}-\frac{\partial u_0 }{\partial x_j}, \varphi \right\rangle_{L^2}=0, \forall \varphi \in C_c(\Omega).$$

To prove such a claim, let $\varphi \in C_c(\Omega).$

Let a new $\varepsilon>0$ be given.

Hence, for each $r>0$ there exists $n_r \in \mathbb{N}$ such that if $n>n_r$, then $$\|u_n-u_0\|_\infty < \varepsilon r.$$

Observe that by density, we may obtain $\varphi_1 \in C^1_c(\Omega)$ such that
$$\|\varphi-\varphi_1\|_\infty< \varepsilon.$$

Hence,
\begin{eqnarray}
&&\left|\left\langle \frac{\partial u_n }{\partial x_j}-\frac{\partial u_0 }{\partial x_j}, \varphi \right\rangle_{L^2}\right|
\nonumber \\ &\leq& \left\langle \left|\frac{\partial u_n }{\partial x_j}-\frac{\partial u_0 }{\partial x_j}\right|, |\varphi -\varphi_1|\right\rangle_{L^2} \nonumber \\ &&+  \left|\left\langle\frac{\partial u_n }{\partial x_j}-\frac{\partial u_0 }{\partial x_j}, \varphi_1 \right\rangle_{L^2}\right| \nonumber \\ &\leq&
 \int_\Omega \left|\frac{\partial u_n }{\partial x_j}-\frac{\partial u_0 }{\partial x_j}\right|\;dx\|\varphi-\varphi_1\|_\infty \nonumber \\ &&+\left|\left\langle u_n-u_0,\frac{\partial \varphi_1 }{\partial x_j} \right\rangle_{L^2}\right| \nonumber \\ &\leq& 2 K_1 \varepsilon +\varepsilon\left\|\frac{\partial  \varphi_1}{\partial x_j}\right\|_1 \nonumber \\ &=& \left(2K_1+\left\|\frac{\partial  \varphi_1}{\partial x_j}\right\|_1\right)\varepsilon, \forall n>n_1.
 \end{eqnarray}
where $K_1>0$ is such that $$\|u_0\|_1<K_1, \|u_0\|_2<K_1$$ and $$\|u_n\|_1 < K_1,\;\|u_n\|_2 < K_1\; \forall n \in \mathbb{N}.$$

From this we may infer that
\begin{equation}\label{eq29}\lim_{n \rightarrow \infty}\left\langle \frac{\partial u_n }{\partial x_j}-\frac{\partial u_0 }{\partial x_j}, \varphi \right\rangle_{L^2}=0,\; \forall \varphi \in C_c(\Omega)\end{equation} so that
the claim holds.

Since $\Omega$ is bounded, we have $W^{1,4}(\Omega) \subset W^{1,2}(\Omega)$.

From the Gauss-Green Formula for such a latter space, we obtain

\begin{eqnarray}
&&\lim_{n \rightarrow \infty }\left|\frac{\partial u_n(x) }{\partial x_j}-\frac{\partial u_0(x) }{\partial x_j}\right|
\nonumber \\ &=&\lim_{ n \rightarrow \infty }\left( \lim_{r \rightarrow 0^+} \frac{\left| \int_{B_r(x)}\left( \frac{\partial u_n(y) }{\partial x_j}-\frac{\partial u_0(y) }{\partial x_j}\right)\;dy\right|}{m(B_r(x))}\right) \nonumber \\ &\leq&\limsup_{ n \rightarrow \infty }\left( \limsup_{r \rightarrow 0^+} \frac{\left| \int_{B_r(x)}\left( \frac{\partial u_n(y) }{\partial x_j}-\frac{\partial u_0(y) }{\partial x_j}\right)\;dy\right|}{m(B_r(x))}\right) \nonumber \\ &=&\limsup_{r \rightarrow 0^+}\left(\limsup_{ n \rightarrow \infty }  \frac{\left| \int_{B_r(x)}\left( \frac{\partial u_n(y) }{\partial x_j}-\frac{\partial u_0(y) }{\partial x_j}\right)\;dy\right|}{m(B_r(x))}\right) \nonumber \\ &=&
\limsup_{r \rightarrow 0^+}\left(\limsup_{ n \rightarrow \infty }\frac{\left| \int_{\partial B_r(x)} (u_n(y)-u_0(y))\nu_i \;dS(y)\right|}{m(B_r(x))}\right) \nonumber \\ &=&
\limsup_{r \rightarrow 0^+}\left(\limsup_{ n \rightarrow \infty }\frac{\left| (u_n(\tilde{y})-u_0(\tilde{y}))\int_{\partial B_r(x)} \nu_i \;dS(y)\right|}{m(B_r(x))}\right) \nonumber \\ &\leq& \varepsilon \limsup_{r \rightarrow 0^+} \frac{ \int_{\partial B_r(x)} r |\nu_i| \;dS(y)}{m(B_r(x))}
 \nonumber \\ &\leq& K_1 \varepsilon, \text{ for almost all } x \in \Omega,\end{eqnarray}
where $\tilde{y} \in \overline{B_r(x)}$ depends on $r$ and $n$.

Therefore, we may infer that
$$\lim_{n \rightarrow \infty}\frac{\partial u_n(x) }{\partial x_j}=\frac{\partial u_0(x) }{\partial x_j}, \text{ a. e. in }  \Omega.$$

Here we define
$$A_{n, \varepsilon}=\left\{ x \in \Omega \;:\;\left|\frac{\partial u_n(x) }{\partial x_j}-\frac{\partial u_0(x) }{\partial x_j}\right|< \varepsilon\right\}.$$

Define also
$$B_n=\cap_{k=n}^\infty A_{k,\varepsilon}.$$

Observe that for almost all $x \in \Omega$, there exists $n_x \in \mathbb{N}$ such that if $n>n_x$, then
$$\left|\frac{\partial u_n(x) }{\partial x_j}-\frac{\partial u_0(x) }{\partial x_j}\right|< \varepsilon,$$
so that almost all $x \in B_n,\; \forall n>n_x.$

From this $$\Omega = \left(\cup_{n=1}^\infty B_n\right) \cup B_0,$$
where $m(B_0)=0.$

Also $$\cup_{k=1}^n B_k=B_n,$$
so that
$$\lim_{ n \rightarrow \infty } m(B_n)=m(\Omega).$$

Observe that there exists $n_0 \in \mathbb{N}$ such that if $n>n_0$, then
$$\sqrt[4]{m(\Omega \setminus B_n)}< \varepsilon/K^3.$$

Consequently fixing $n >n_0$, from the generalized H\"{o}lder inequality, if $m>n$, we have
\begin{eqnarray}&&\int_\Omega \left|\frac{\partial u_m }{\partial x_j}-\frac{\partial u_0 }{\partial x_j}\right|^3 \;dx
\nonumber \\ &=& \int_{\Omega \setminus B_n} \left|\frac{\partial u_m }{\partial x_j}-\frac{\partial u_0 }{\partial x_j}\right|^3\;dx
\nonumber \\ &&+ \int_{B_n}\left| \frac{\partial u_m }{\partial x_j}-\frac{\partial u_0 }{\partial x_j}\right|^3\;dx \nonumber \\ &\leq& \left\|\frac{\partial u_m }{\partial x_j}-\frac{\partial u_0 }{\partial x_j}\right\|_4^3 \|\chi_{\Omega\setminus B_n}\|_4 +\varepsilon^3 m(\Omega) \nonumber \\ &\leq&
\varepsilon+ \varepsilon^3 m(\Omega).
\end{eqnarray}

Summarizing, we may infer that
$$\int_\Omega \left|\frac{\partial u_m }{\partial x_j}-\frac{\partial u_0 }{\partial x_j}\right|^3 \;dx \rightarrow 0, \text{ as } m \rightarrow \infty, \; \forall j \in \{1,2,3\}.$$
so that
$$u_n \rightarrow u_0, \text{ strongly in } W^{1,3}(\Omega).$$

The proof is complete.

\end{proof}
\section{An existence result for a model in elasticity}
In this section we present an existence result for a non-linear elasticity model.

\begin{thm}
Let $\Omega \subset \mathbb{R}^3$ be an open, bounded and connected set with a regular (Lipschitzian)
boundary denoted by $\partial \Omega.$

Consider the functional $J:U \rightarrow \mathbb{R}$ defined by
\begin{eqnarray}
J(u)&=& \frac{1}{2}\int_\Omega H_{ijkl}\left( \frac{u_{i,j}+u_{j,i}}{2}+\frac{u_{m,i}u_{m,j}}{2}\right)\left( \frac{u_{k,l}+u_{l,k}}{2}+\frac{u_{p,k}u_{p,l}}{2}\right)\;dx \nonumber \\ &&-\langle P_i,u_i \rangle_{L^2},
\end{eqnarray}
where $U=W_0^{1,4}(\Omega;\mathbb{R}^3)$, $P_i \in L^\infty(\Omega),\;\forall i \in \{1,2,3\}.$

Moreover, $\{H_{ijkl}\}$ is a fourth order constant tensor such that
$$H_{ijkl}t_{ij}t_{kl} \geq c_0 t_{ij}t_{ij}, \; \forall \text{ symmetric } t \in \mathbb{R}^{2 \times 2}$$
and
$$H_{ijkl}t_{mi}t_{mj}t_{kp}t_{lp} \geq c_1 \sum_{i,j=1}^3 t_{ij}^4,\; \forall \text{ symmetric } t \in \mathbb{R}^{2\times 2},$$
for some real constants $c_0>0, c_1>0$.

Under such hypotheses, there exists $u_0 \in U$ such that
$$J(u_0)=\min_{u \in U} J(u).$$
\end{thm}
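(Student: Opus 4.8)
The plan is to use the direct method of the calculus of variations, exactly as in the earlier existence theorems of the article, the new ingredient being the auxiliary compactness theorem of the previous section. Write $\varepsilon_{ij}(u)=\frac{u_{i,j}+u_{j,i}}{2}$ and $q_{ij}(u)=u_{m,i}u_{m,j}$, and set $E_{ij}(u)=\varepsilon_{ij}(u)+\tfrac12 q_{ij}(u)$ for the (symmetric) Green--St.~Venant strain, so that
$$J(u)=\frac{1}{2}\int_\Omega H_{ijkl}E_{ij}(u)E_{kl}(u)\;dx-\langle P_i,u_i\rangle_{L^2}.$$
Using the symmetry of $\{H_{ijkl}\}$, expand $\tfrac12\int_\Omega H_{ijkl}E_{ij}E_{kl}\,dx=\tfrac12\int_\Omega H_{ijkl}\varepsilon_{ij}\varepsilon_{kl}\,dx+\tfrac12\int_\Omega H_{ijkl}\varepsilon_{ij}q_{kl}\,dx+\tfrac18\int_\Omega H_{ijkl}q_{ij}q_{kl}\,dx$.

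First I would prove that $J$ is coercive and bounded below on $U=W_0^{1,4}(\Omega;\mathbb{R}^3)$. The first term of the expansion is nonnegative by the first coercivity hypothesis on $\{H_{ijkl}\}$; the last term is bounded below by $c\int_\Omega|\nabla u|^4\,dx$ for some $c>0$ by the second coercivity hypothesis (note $q(u)$ is symmetric); and, since $|\varepsilon_{ij}(u)|\le C|\nabla u|$ and $|q_{kl}(u)|\le C|\nabla u|^2$ pointwise, the mixed term is, by the Cauchy--Schwarz and Young inequalities on the bounded set $\Omega$, bounded below by $-\delta\int_\Omega|\nabla u|^4\,dx-C_\delta$ for each $\delta>0$. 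Choosing $\delta$ small and estimating $|\langle P_i,u_i\rangle_{L^2}|\le\|P\|_{L^{4/3}(\Omega)}\|u\|_{L^4(\Omega)}\le C\|\nabla u\|_{L^4(\Omega)}$ via $P_i\in L^\infty(\Omega)\subset L^{4/3}(\Omega)$ and the Poincaré inequality on $W_0^{1,4}$, one gets $J(u)\ge c'\|\nabla u\|_{L^4}^4-C\|\nabla u\|_{L^4}-C'$ with $c'>0$; hence $J$ is bounded below and coercive.

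Next, let $\alpha_1=\inf_{u\in U}J(u)\in\mathbb{R}$ and choose a minimizing sequence $\{u_n\}\subset U$. Coercivity gives a uniform bound $\|u_n\|_{1,4}\le K$, so by the auxiliary theorem of the preceding section, applied componentwise, there is $u_0\in W^{1,4}(\Omega;\mathbb{R}^3)$ with, along a not relabeled subsequence, $u_n\rightharpoonup u_0$ weakly in $W^{1,4}$, $u_n\to u_0$ uniformly on $\overline\Omega$, and $u_n\to u_0$ strongly in $W^{1,3}(\Omega;\mathbb{R}^3)$; since $W_0^{1,4}$ is a closed subspace it is weakly closed, so $u_0\in U$. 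The crucial point is that $E_{ij}(u_n)\rightharpoonup E_{ij}(u_0)$ weakly in $L^2(\Omega)$: the linear part $\varepsilon_{ij}(u_n)\rightharpoonup\varepsilon_{ij}(u_0)$ weakly in $L^4\subset L^2$ is immediate, and for the quadratic part the strong convergence $(u_n)_{m,i}\to(u_0)_{m,i}$ in $L^3$ yields $(u_n)_{m,i}(u_n)_{m,j}\to(u_0)_{m,i}(u_0)_{m,j}$ strongly in $L^{3/2}$, hence in the distributional sense, while the $L^4$ bound on $\nabla u_n$ makes $(u_n)_{m,i}(u_n)_{m,j}$ bounded in $L^2$; a bounded sequence in $L^2$ converging distributionally converges weakly in $L^2$ to the same limit.

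Finally, the functional $E\mapsto\tfrac12\int_\Omega H_{ijkl}E_{ij}E_{kl}\,dx$ is a continuous nonnegative quadratic form on $L^2(\Omega;\mathbb{R}^{3\times3})$ (nonnegative on symmetric tensors by the first hypothesis), hence convex and therefore weakly lower semicontinuous; and $\langle P_i,(u_n)_i\rangle_{L^2}\to\langle P_i,(u_0)_i\rangle_{L^2}$ because $u_n\to u_0$ strongly in $L^4$ and $P_i\in L^\infty$. Combining these facts,
$$\alpha_1=\lim_{n\to\infty}J(u_n)=\liminf_{n\to\infty}J(u_n)\ge J(u_0)\ge\alpha_1,$$
so $J(u_0)=\min_{u\in U}J(u)$, as desired. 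I expect the main obstacle to be precisely the passage to the limit in the nonlinear strain term: weak $W^{1,4}$ convergence alone does not pass to the limit inside the product $u_{m,i}u_{m,j}$, and the device making it work is the strong $W^{1,3}$ convergence supplied by the auxiliary theorem; the coercivity bookkeeping (absorbing the mixed $\varepsilon$--$q$ term into the quartic term, and reconciling the index structure in the second hypothesis with $\int_\Omega|\nabla u|^4\,dx$) is routine but must be handled with some care.
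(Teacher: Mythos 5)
Your proof is correct, and its skeleton is the paper's: the direct method, boundedness of a minimizing sequence in $W_0^{1,4}$, the auxiliary compactness theorem of the preceding section (weak $W^{1,4}$, uniform, and strong $W^{1,3}$ convergence), convergence of the nonlinear strain in a weak sense, and convexity of the stored energy in the strain variable. Where you genuinely diverge is in how boundedness below, coercivity and the load term are handled. The paper introduces a positive definite tensor $\{T_{ij}\}\in L^\infty$ with $T_{ij,j}+P_i=0$ (the device of Remark \ref{us123W}), integrates by parts, and rewrites $J(u)=\frac12\int_\Omega H_{ijkl}v_{ij}(u)v_{kl}(u)\,dx-\langle T_{ij},v_{ij}(u)\rangle_{L^2}+\langle T_{ij},\tfrac12 u_{m,i}u_{m,j}\rangle_{L^2}$, so that $J$ is a convex (quadratic-minus-linear) expression in $v_{ij}(u)$ plus a nonnegative term, and then runs lower semicontinuity on $v_{ij}(u_n)\rightharpoonup v_{ij}(u_0)$ in $L^{3/2}$. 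You instead bound the load term directly by H\"older and Poincar\'e and extract coercivity by expanding the quadratic form in $E=\varepsilon+\tfrac12 q$, using the quartic hypothesis for the $q$--$q$ term and Young's inequality to absorb the mixed term, and you pass to the limit via weak $L^2$ convergence of $E_{ij}(u_n)$ together with weak lower semicontinuity of the nonnegative quadratic form on symmetric $L^2$ fields. Your route is more self-contained and quantitative precisely where the paper is terse (its one-line claim that the hypotheses on $H$ and Poincar\'e give $\|u_n\|_{1,4}\le K$), whereas the paper's $T_{ij}$ device is the ingredient it advertises as new and is what carries over to the companion theorem with mixed boundary conditions, where the load cannot be absorbed so simply. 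Two small caveats, neither worse than the paper's own level of detail: your expansion invokes symmetry of $\{H_{ijkl}\}$, which this particular statement does not assume (harmless: keep the two cross terms separate and estimate each); and the second coercivity hypothesis is stated only for symmetric $t$, so it cannot be applied literally with $t=\nabla u$ — apply it instead to the symmetric square root $s$ of $q(u)$, noting $H_{ijkl}q_{ij}q_{kl}=H_{ijkl}s_{mi}s_{mj}s_{kp}s_{lp}\ge c_1\sum_{i,j}s_{ij}^4\ge c\,|\nabla u|^4$ since $\sum_{i,j}s_{ij}^2=\mathrm{tr}\,q(u)=|\nabla u|^2$; with that one-line fix your coercivity estimate stands.
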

\begin{proof}
First observe that we may find a positive definite tensor $\{T_{ij}\} \subset L^\infty(\Omega;\mathbb{R}^{2\times 2})$ such that
$$T_{ij,j}+P_i=0, \text{ in } \Omega.$$

Hence, denoting
$$v_{ij}(u)=\frac{u_{i,j}+u_{j,i}}{2}+\frac{u_{m,i}u_{m,j}}{2},$$
we have
$$\frac{u_{i,j}+u_{j,i}}{2}=v_{ij}(u)-\frac{u_{m,i}u_{m,j}}{2},$$
so that
\begin{eqnarray}
J(u)&=&\frac{1}{2}\int_\Omega H_{ijkl}v_{ij}(u)v_{kl}(u)\;dx+\langle T_{ij,j},u_i\rangle_{L^2} \nonumber \\ &=&
\frac{1}{2}\int_\Omega H_{ijkl}v_{ij}(u)v_{kl}(u)\;dx-\left\langle T_{ij},\frac{u_{i,j}+u_{j,i}}{2}\right\rangle_{L^2} \nonumber \\ &=&\frac{1}{2}\int_\Omega H_{ijkl}v_{ij}(u)v_{kl}(u)\;dx-\left\langle T_{ij},v_{ij}(u)-\frac{u_{mi}\;u_{mj}}{2}\right\rangle_{L^2} \nonumber \\ &=&\frac{1}{2}\int_\Omega H_{ijkl}v_{ij}(u)v_{kl}(u)\;dx-\left\langle T_{ij},v_{ij}(u)\right\rangle_{L^2}+\left\langle T_{ij},\frac{u_{mi}\;u_{mj}}{2}\right\rangle_{L^2},\; \forall u \in U.
\end{eqnarray}
From this and the hypotheses on $\{H_{ijkl}\}$ it is clear that $J$ is bounded below so that there exists $\alpha \in \mathbb{R}$ such that
$$\alpha=\inf_{u \in U} J(u).$$

Let $\{u_n\} \subset U$ be a minimizing sequence for $J$, that is, let such a sequence be such that
$$J(u_n) \rightarrow \alpha, \; \text{ as } n \rightarrow \infty.$$

Also from the hypotheses on $\{H_{ijkl}\}$ and the Poincar\'{e} inequality, we have that there exists $K>0$ such that
$$ \|u_n\|_{1,4} \leq K,\; \forall n \in \mathbb{N}.$$

From the auxiliary result in the last section, there exists $u_0 \in C^0(\overline{\Omega};\mathbb{R}^3) \cap W^{1,4}(\Omega;\mathbb{R}^3)$ such that, up to a not relabeled subsequence,
$$u_n \rightarrow u_0, \text{ strongly in } W^{1,3}(\Omega:\mathbb{R}^3).$$

From such a latter result, up to a not relabeled subsequence, we may obtain
$$\frac{(u_n)_{i,j}+(u_n)_{j,i}}{2}+\frac{(u_n)_{m,i}(u_n)_{m,j}}{2} \rightharpoonup \frac{(u_0)_{i,j}+(u_0)_{j,i}}{2}+\frac{(u_0)_{m,i}(u_0)_{m,j}}{2}, \text{ weakly in } L^{3/2}(\Omega).$$

Since $L^{3/2}(\Omega)$ is reflexive, from the convexity of $J$ in $v_{ij}(u)$ and since $\{T_{ij}\}$ is positive definite, we have that
$$\alpha=\liminf_{n \rightarrow \infty}J(u_n) \geq J(u_0),$$
so that
$$J(u_0)=\min_{u \in U} J(u).$$

The proof is complete.

\end{proof}

\section{Another existence result for a model in elasticity}
In this section we present another existence result  for a similar (to the previous one)  non-linear elasticity model.

\begin{thm}
Let $\Omega \subset \mathbb{R}^3$ be an open, bounded and connected set with a regular (Lipschitzian)
boundary denoted by $\partial \Omega.$

Consider the functional $J:U \rightarrow \mathbb{R}$ defined by
\begin{eqnarray}
J(u)&=& \frac{1}{2}\int_\Omega H_{ijkl}\left( \frac{u_{i,j}+u_{j,i}}{2}+\frac{u_{m,i}u_{m,j}}{2}\right)\left( \frac{u_{k,l}+u_{l,k}}{2}+\frac{u_{p,k}u_{p,l}}{2}\right)\;dx \nonumber \\ &&-\langle P_i,u_i \rangle_{L^2}-\langle P^t_i,u_i \rangle_{L^2(\Gamma_t)},
\end{eqnarray}
where $$\partial \Omega =\Gamma=\Gamma_0 \cup \Gamma_t,$$ $$\Gamma_0 \cap \Gamma_t=\emptyset,$$
$m_\Gamma(\Gamma_0)>0,$ $m_\Gamma(\Gamma_t)>0,$ $P_i \in L^\infty(\Omega)\cap W^{1,2}(\Omega),\; P^t_i \in L^\infty(\Gamma_t),\;
\forall i \in \{1,2,3\}.$

Moreover $$U=\{u \in W^{1,4}(\Omega;\mathbb{R}^3)\;: \; u=\hat{u}_0 \text{ on } \Gamma_0\},$$
where we assume $\hat{u}_0 \in W^{1,4}(\Omega).$

Furthermore, $\{H_{ijkl}\}$ is a fourth order symmetric constant tensor such that
$$H_{ijkl}t_{ij}t_{kl} \geq c_0 t_{ij}t_{ij}, \; \forall \text{ symmetric } t \in \mathbb{R}^{2 \times 2}$$
and
$$H_{ijkl}t_{mi}t_{mj}t_{kp}t_{lp} \geq c_1 \sum_{i,j=1}^3 t_{ij}^4,\; \forall \text{ symmetric } t \in \mathbb{R}^{2\times 2},$$
for some real constants $c_0>0, c_1>0$.

Under such hypotheses, there exists $u_0 \in U$ such that
$$J(u_0)=\min_{u \in U} J(u).$$
\end{thm}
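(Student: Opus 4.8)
The plan is to follow closely the template of the two immediately preceding existence theorems in elasticity, the only genuinely new features being the mixed boundary condition $u=\hat{u}_0$ on $\Gamma_0$ and the additional traction term $\langle P^t_i,u_i\rangle_{L^2(\Gamma_t)}$. First I would produce, exactly as in Remark \ref{us123W} (using that $P_i\in L^\infty(\Omega)\cap W^{1,2}(\Omega)$), a symmetric tensor $\{T_{ij}\}\in L^\infty(\Omega;\mathbb{R}^{3\times 3})$ with $T_{ij,j}+P_i=0$ in $\Omega$, which after adding a sufficiently large multiple of $\{\delta_{ij}\}$ (an operation that does not alter the divergence) is positive definite a.e.\ in $\Omega$. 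This $\{T_{ij}\}$ is regular enough for the integrations by parts used below.

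The second step is the algebraic rewriting of $J$. Setting $v_{ij}(u)=\frac{u_{i,j}+u_{j,i}}{2}+\frac{u_{m,i}u_{m,j}}{2}$ and integrating by parts, using $T_{ij,j}=-P_i$, the symmetry of $\{T_{ij}\}$, and the fact that on $\Gamma_0$ the trace of $u$ equals the fixed datum $\hat{u}_0$ (so the corresponding boundary integral is a constant $C_0$), I obtain
$$J(u)=\frac{1}{2}\int_\Omega H_{ijkl}v_{ij}(u)v_{kl}(u)\,dx-\langle T_{ij},v_{ij}(u)\rangle_{L^2}+\frac{1}{2}\langle T_{ij},u_{m,i}u_{m,j}\rangle_{L^2}+\langle T_{ij}n_j-P^t_i,u_i\rangle_{L^2(\Gamma_t)}+C_0.$$
Here the third term is nonnegative because $\{T_{ij}\}$ is positive definite; the first two together are bounded below by completing the square and using $H_{ijkl}t_{ij}t_{kl}\geq c_0 t_{ij}t_{ij}$; and the boundary term is merely linear in the trace of $u$, hence dominated via $\|u\|_{L^2(\Gamma_t)}\leq C\|u\|_{W^{1,4}(\Omega)}$. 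Combining the coercivity coming from $H_{ijkl}t_{mi}t_{mj}t_{kp}t_{lp}\geq c_1\sum_{ij}t_{ij}^4$ (which, together with Young's inequality applied to the cubic cross terms, controls $\int_\Omega|\nabla u|^4$ up to lower order) with the generalized Poincar\'e inequality (valid since $m_\Gamma(\Gamma_0)>0$ and $u-\hat{u}_0$ vanishes on $\Gamma_0$), I conclude, exactly as in the previous theorem, that $J$ is bounded below and that every minimizing sequence $\{u_n\}$ satisfies $\|u_n\|_{1,4}\leq K$ for some $K>0$.

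The third step is the passage to the limit. By the auxiliary compactness theorem of the previous section (note $\Omega\subset\mathbb{R}^3$ and $p=4>3$), up to a not relabeled subsequence there is $u_0\in W^{1,4}(\Omega;\mathbb{R}^3)\cap C(\overline{\Omega};\mathbb{R}^3)$ with $u_n\rightharpoonup u_0$ weakly in $W^{1,4}$, $u_n\to u_0$ uniformly on $\overline{\Omega}$, and $u_n\to u_0$ strongly in $W^{1,3}(\Omega;\mathbb{R}^3)$. Continuity of the trace operator on the bounded sequence yields $u_0=\hat{u}_0$ on $\Gamma_0$, so $u_0\in U$. From the strong $W^{1,3}$ convergence and the $L^4$ bound on the gradients, a H\"older estimate gives $(u_n)_{m,i}(u_n)_{m,j}\to(u_0)_{m,i}(u_0)_{m,j}$ strongly in $L^{3/2}$, while $(u_n)_{i,j}\to(u_0)_{i,j}$ strongly in $L^{3/2}$; hence $v_{ij}(u_n)\to v_{ij}(u_0)$ strongly in $L^{3/2}$, and since $\{v_{ij}(u_n)\}$ is bounded in $L^2$ this forces $v_{ij}(u_n)\rightharpoonup v_{ij}(u_0)$ weakly in $L^2$. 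The quadratic form $v\mapsto\frac{1}{2}\int_\Omega H_{ijkl}v_{ij}v_{kl}\,dx$ is convex, hence weakly lower semicontinuous on $L^2$, and the remaining linear terms pass to the limit by the uniform convergence of $u_n$ on $\overline{\Omega}$ (which gives convergence in $L^2(\Omega)$ and in $L^2(\Gamma_t)$). Therefore $\inf_{u\in U}J(u)=\liminf_{n\to\infty}J(u_n)\geq J(u_0)$, so $J(u_0)=\min_{u\in U}J(u)$.

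The step I expect to be the main obstacle is the passage to the limit in the non-convex quartic term: weak $W^{1,4}$ convergence alone is insufficient, and one must use the strong $W^{1,3}$ convergence furnished by the auxiliary theorem to identify the weak $L^2$-limit of $v_{ij}(u_n)$ with $v_{ij}(u_0)$ before invoking convexity of $J$ in $v$. A secondary, routine point is verifying that the Dirichlet datum $\hat{u}_0$ on $\Gamma_0$ is inherited by the weak limit, which follows from continuity of the trace map on the bounded minimizing sequence.
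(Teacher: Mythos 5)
Your proposal is correct and follows essentially the same route as the paper's own proof: constructing a positive definite tensor $\{T_{ij}\}$ with $T_{ij,j}+P_i=0$ to rewrite $J$ in terms of $v_{ij}(u)$, deducing boundedness below and a $W^{1,4}$ bound on minimizing sequences, invoking the auxiliary compactness theorem to get strong $W^{1,3}$ convergence, recovering the boundary datum on $\Gamma_0$ by trace continuity, and concluding by convexity of $J$ in $v_{ij}(u)$. The only differences are cosmetic (you spell out the coercivity via Young's and a generalized Poincar\'{e} inequality and work with weak $L^2$ rather than weak $L^{3/2}$ convergence of $v_{ij}(u_n)$), so no further changes are needed.
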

\begin{proof}
First observe that we may find a positive definite tensor $\{T_{ij}\} \subset L^\infty(\Omega;\mathbb{R}^{2\times 2})\cap W^{1,2}(\Omega;\mathbb{R}^{2\times 2})$ such that
$$T_{ij,j}+P_i=0, \text{ in } \Omega.$$

Hence, denoting
$$v_{ij}(u)=\frac{u_{i,j}+u_{j,i}}{2}+\frac{u_{m,i}u_{m,j}}{2},$$
we have
$$\frac{u_{i,j}+u_{j,i}}{2}=v_{ij}(u)-\frac{u_{m,i}u_{m,j}}{2},$$
so that, from this, the Gauss-Green formula and the Trace Theorem,
\begin{eqnarray}
J(u)&=&\frac{1}{2}\int_\Omega H_{ijkl}v_{ij}(u)v_{kl}(u)\;dx+\langle T_{ij,j},u_i\rangle_{L^2}-\langle P_i,u_i \rangle_{L^2}-\langle P^t_i,u_i \rangle_{L^2(\Gamma_t)} \nonumber \\ &=&
\frac{1}{2}\int_\Omega H_{ijkl}v_{ij}(u)v_{kl}(u)\;dx-\left\langle T_{ij},\frac{u_{i,j}+u_{j,i}}{2}\right\rangle_{L^2} \nonumber \\ &&+\langle u_i, T_{ij}\nu_j\rangle_{L^2(\Gamma_t)}+\langle (u_0)_i, T_{ij}\nu_j\rangle_{L^2(\Gamma_0)}-\langle P_i,u_i \rangle_{L^2}-\langle P^t_i,u_i \rangle_{L^2(\Gamma_t)} \nonumber \\ &=&\frac{1}{2}\int_\Omega H_{ijkl}v_{ij}(u)v_{kl}(u)\;dx-\left\langle T_{ij},v_{ij}(u)-\frac{u_{mi}\;u_{mj}}{2}\right\rangle_{L^2}\nonumber \\ &&+\langle u_i, T_{ij}\nu_j\rangle_{L^2(\Gamma_t)}+\langle (\hat{u}_0)_i, T_{ij}\nu_j\rangle_{L^2(\Gamma_0)}-\langle P_i,u_i \rangle_{L^2}-\langle P^t_i,u_i \rangle_{L^2(\Gamma_t)} \nonumber \\ &\geq&\frac{1}{2}\int_\Omega H_{ijkl}v_{ij}(u)v_{kl}(u)\;dx-\langle T_{ij},v_{ij}(u)\rangle_{L^2}+\left\langle T_{ij},\frac{u_{mi}\;u_{mj}}{2}\right\rangle_{L^2}\nonumber \\ &&-K_3\sum_{i=1}^3\|u_i\|_{1,4}-K_3\|\hat{u}_0\|_{1,4},\; \forall u \in U,
\end{eqnarray}
for some appropriate $K_3>0$.

From this, the hypotheses on $\{H_{ijkl}\}$ and a Poincar\'{e} type inequality, since $\{T_{ij}\}$ is positive definite, it is clear that $J$ is bounded below so that there exists $\alpha \in \mathbb{R}$ such that
$$\alpha=\inf_{u \in U} J(u).$$

Let $\{u_n\} \subset U$ be a minimizing sequence for $J$, that is, let such a sequence be such that
$$J(u_n) \rightarrow \alpha, \; \text{ as } n \rightarrow \infty.$$

Also from the hypotheses on $\{H_{ijkl}\}$ and a Poincar\'{e} type inequality, we have that there exists $K>0$ such that
$$ \|u_n\|_{1,4} \leq K,\; \forall n \in \mathbb{N}.$$

From the auxiliary result in the last section, there exists $u_0 \in C^0(\overline{\Omega};\mathbb{R}^3) \cap W^{1,4}(\Omega;\mathbb{R}^3)$ such that, up to a not relabeled subsequence,
$$u_n \rightarrow u_0, \text{ strongly in } W^{1,3}(\Omega;\mathbb{R}^3).$$

From such a latter result, up to a not relabeled subsequence, we may obtain
$$\frac{(u_n)_{i,j}+(u_n)_{j,i}}{2}+\frac{(u_n)_{m,i}(u_n)_{m,j}}{2} \rightharpoonup \frac{(u_0)_{i,j}+(u_0)_{j,i}}{2}+\frac{(u_0)_{m,i}(u_0)_{m,j}}{2}, \text{ weakly in } L^{3/2}(\Omega).$$

 Also from the continuity of the Trace operator we get $$u_0= \hat{u}_0, \text{ on } \Gamma_0,$$ so that
 $u_0 \in U.$

Since $L^{3/2}(\Omega)$ is reflexive, from the convexity of $J$ in $\{v_{ij}(u)\}$ and since $\{T_{ij}\}$ is positive definite, we have that
$$\alpha=\liminf_{n \rightarrow \infty}J(u_n) \geq J(u_0),$$
so that,
$$J(u_0)=\min_{u \in U} J(u).$$

The proof is complete.

\end{proof}


\begin{thebibliography}{}
%
% and use \bibitem to create references. Consult the Instructions
% for authors for reference list style.
%
% Format for Journal Reference
\bibitem{1}
R.A. Adams and J.F. Fournier, {Sobolev Spaces}, 2nd edn.
 (Elsevier, New York, 2003).

%D. Bohm, {A Suggested Interpretation of the Quantum Theory in Terms of Hidden Variables I}, Phys.Rev. \textbf{85},
%Iss. 2, (1952).
 \bibitem{100}
J.F. Annet, \textit{Superconductivity, Superfluids and Condensates}, 2nd edn.
 ( Oxford Master Series in
Condensed Matter Physics, Oxford University Press, Reprint, 2010)
\bibitem{85} W.R. Bielski and J.J. Telega, A Contribution to Contact Problems for a Class of Solids and Structures,
Arch. Mech., 37, 4-5, pp. 303-320, Warszawa 1985.

\bibitem{2900} W.R. Bielski, A. Galka, J.J. Telega, The Complementary Energy Principle and Duality for
Geometrically Nonlinear Elastic Shells. I. Simple case of moderate rotations around a tangent to the middle surface.
Bulletin of the Polish Academy of Sciences, Technical Sciences, Vol. 38, No. 7-9, 1988.

\bibitem{55}
D. Bohm, {Quantum Theory}
(Dover Publications INC., New York, 1989).
 %\bibitem{2900} W.R. Bielski, A. Galka, J.J. Telega, The Complementary Energy Principle and Duality for
%Geometrically Nonlinear Elastic Shells. I. Simple case of moderate rotations around a tangent to the middle surface.
%Bulletin of the Polish Academy of Sciences, Technical sciences, Vol. 38, No. 7-9, 1988.
 \bibitem{120}
F. Botelho, {Functional Analysis and Applied Optimization in Banach Spaces},
 (Springer Switzerland, 2014).
% \bibitem{19} F. Botelho, {Real Analysis and Applications}, (Springer Switzerland, 2018).
\bibitem{500} F. Botelho, {A Classical Description of Variational Quantum Mechanics and Related Models}, Nova Science Publishers, New York, 2017.
 \bibitem{903} P.Ciarlet, {\it Mathematical Elasticity}, {Vol. I -- Three Dimensional Elasticity}, North Holland, Elsevier (1988).

\bibitem{[3]} P.Ciarlet, {\it Mathematical Elasticity}, {Vol. II -- Theory of Plates}, North Holland Elsevier (1997).

\bibitem{[4]} P.Ciarlet, {\it Mathematical Elasticity}, {Vol. III -- Theory of Shells}, North Holland Elsevier (2000).

%\bibitem{1410}
%B. Hall, {Quantum Theory for Mathematicians} (Springer, New York
%2013).
\bibitem{[6]} I.Ekeland and R.Temam, {\it Convex Analysis and
Variational Problems.} North Holland (1976).

\bibitem{101}
L.D. Landau and E.M. Lifschits, {Course of Theoretical Physics, Vol. 5- Statistical Physics, part 1}.
(Butterworth-Heinemann, Elsevier, reprint 2008).
\bibitem{12} J.F. Toland, {\it A duality principle for non-convex
optimisation and the calculus of variations}, Arch. Rath. Mech.
Anal., {\bf 71}, No. 1 (1979), 41-61.




\end{thebibliography}
\end{document}